\title{Obstructions to Stably Fibering Manifolds}
\author{Wolfgang Steimle}
\address{Universit\"at Bonn\\
               Mathematisches Institut\\
               Endenicher Allee~60,
               D-53115 Bonn, Germany}
\email{steimle@math.uni-bonn.de}
\date{May 2011}
\keywords{Fibering a manifold, algebraic $K$-theory of spaces}
\subjclass[2010]{55R10, 19J10, 57N20}
\DeclareMathOperator*{\colim}{colim}
\DeclareMathOperator*{\hocolim}{hocolim}
\DeclareMathOperator*{\holim}{holim}
\begin{document}

\theoremstyle{plain}
\newtheorem{thm}{Theorem}
\newtheorem{cor}[thm]{Corollary}
\newtheorem{lem}[thm]{Lemma}
\newtheorem{prop}[thm]{Proposition}
\newtheorem{conj}[thm]{Conjecture}
\newtheorem*{add}{Addendum}

\theoremstyle{definition}

\newtheorem{defn}[thm]{Definition}
\newtheorem{rem}[thm]{Remark}
\newtheorem{constr}[thm]{Construction}
\newtheorem{obs}[thm]{Observation}
\newtheorem{exmpl}[thm]{Example}

\theoremstyle{remark}

\newtheorem*{notat}{Notation}
\newtheorem*{smallrem}{}


\newcommand{\lto}{\longrightarrow}
\newcommand{\mor}{\opn{mor}}
\newcommand{\pathto}{\rightsquigarrow}
\newcommand{\opn}{\operatorname}
\newcommand{\id}{\opn{id}}
\newcommand{\Proj}{\opn{Proj}}
\newcommand{\Cyl}{\opn{Cyl}}
\newcommand{\Sing}{\opn{Sing}}
\newcommand{\cone}{\opn{cone}}
\newcommand{\cyl}{\opn{cyl}}
\newcommand{\Wh}{\mathrm{Wh}}
\newcommand{\Whp}[1]{\Wh(\pi_1 #1)}
\newcommand{\WhPL}{\Wh^{PL}}
\newcommand{\G}{\mathrm{G}}
\newcommand{\PL}{\mathrm{PL}}
\newcommand{\TOP}{\mathrm{TOP}}
\newcommand{\DIFF}{\mathrm{DIFF}}
\newcommand{\Homeo}{\mathrm{Homeo}}
\newcommand{\hofib}{\mathrm{hofib}}
\newcommand{\FIB}{\opn{FIB}}
\newcommand{\Aut}{\opn{Aut}}
\newcommand{\Out}{\opn{Out}}
\newcommand{\map}{\opn{map}}
\newcommand{\Hom}{\opn{Hom}}
\newcommand{\End}{\opn{End}}
\newcommand{\GL}{\opn{GL}}
\newcommand{\im}{\opn{im}}
\newcommand{\coker}{\opn{coker}}
\newcommand{\simp}{\opn{simp}}
\newcommand{\res}{\opn{res}}
\newcommand{\ind}{\opn{ind}}
\newcommand{\eval}{\opn{eval}}
\newcommand{\xican}{\xi_{\opn{can}}}
\newcommand{\Wall}{\mathrm{Wall}}
\newcommand{\sr}{\opn{sr}}
\newcommand{\Ho}{\opn{Ho}}

\newcommand{\arincl}{\ar@{^{(}->}}
\newcommand{\arinclinv}{\ar@{_{(}->}}
\newcommand{\norm}[1]{\Vert #1\Vert}
\newcommand{\eqclass}[1]{#1/{\sim}}
\newcommand{\sections}[2]{\Gamma\biggl(\begin{array}{c} {#1}\\ \downarrow\\ {#2}\end{array} \biggr)}
\newcommand{\verticalhofib}[3]{\hofib_{#1}\biggl(\begin{array}{c} {#2}\\ \downarrow\\ {#3}\end{array} \biggr)}

\newcommand{\lift}[4]{\opn{Lift}\left(\renewcommand{\arraystretch}{0.5}\begin{matrix} && {#1} \\ && \downarrow \\ {#2} & \overset{#3}{\longrightarrow} & {#4} \end{matrix}\right)\renewcommand{\arraystretch}{1.0}} 
\newcommand{\smalllift}[4]{\opn{Lift}\left(\begin{smallmatrix} && {#1} \\ && \downarrow \\ {#2} & \overset{#3}{\longrightarrow} & {#4} \end{smallmatrix}\right)}

\newcommand{\Ab}{\mathrm{Ab}}
\newcommand{\RMod}{R\mathrm{-mod}}
\newcommand{\Sp}{\mathrm{Spectra}}
\newcommand{\OSp}{\Omega\mathrm{-Spectra}}
\newcommand{\Cat}{\mathrm{Cat}}
\newcommand{\op}{^{op}}
\newcommand{\ltwotop}{\ell^2\textendash \mathrm{Top}}
\newcommand{\cpCW}{\mathbf{cpCW}}
\newcommand{\cat}{\mathbf{cat}}
\newcommand{\CGHaus}{\mathbf{CGHaus}}
\newcommand{\sk}{\opn{sk}}

\newcommand{\RR}{\mathbb{R}}
\newcommand{\ZZ}{\mathbb{Z}}
\newcommand{\NN}{\mathbb{N}}
\newcommand{\QQ}{\mathbb{Q}}
\newcommand{\CC}{\mathbb{C}}
\newcommand{\calB}{\mathcal{B}}
\newcommand{\calC}{\mathcal{C}}
\newcommand{\calE}{\mathcal{E}}
\newcommand{\calH}{\mathcal{H}}
\newcommand{\calL}{\mathcal{L}}
\newcommand{\calP}{\mathcal{P}}
\newcommand{\calQ}{\mathcal{Q}}
\newcommand{\calS}{\mathcal{S}}
\newcommand{\calR}{\mathcal{R}}
\newcommand{\calU}{\mathcal{U}}
\newcommand{\Bun}{\mathrm{Bun}}
\newcommand{\Fib}{\mathrm{Fib}}
\newcommand{\fib}{\opn{fib}}
\newcommand{\Rfd}{\mathcal{R}^{fd}}
\newcommand{\Rf}{\mathcal{R}^f}
\newcommand{\Rfh}{\mathcal{R}^f_h}


\bibliographystyle{alpha}
\setcounter{secnumdepth}{1}
\numberwithin{thm}{section}

\SelectTips{eu}{10}
\renewcommand{\theenumi}{\roman{enumi}}
\renewcommand{\labelenumi}{\textup{(}\theenumi\textup{)}}

\renewcommand{\theequation}{\arabic{equation}}
\renewcommand{\figurename}{Fig.}


\begin{abstract}
Is a given map between compact topological manifolds homotopic to the projection map of a fiber bundle? In this paper obstructions to this question are introduced with values in higher algebraic $K$-theory. Their vanishing implies that the given map fibers \emph{stably}. The methods also provide results for the corresponding uniqueness question; moreover they apply to the fibering of Hilbert cube manifolds, generalizing results by Chapman-Ferry.
\end{abstract}

\maketitle

\setcounter{tocdepth}{1}
\tableofcontents

\section{Introduction}

Given a map $f\colon M\to B$ between closed manifolds, is $f$ homotopic to the projection map of a fiber bundle of closed manifolds? Can the different ways of fibering $f$ be classified? These questions have a long tradition in geometric topology. In the research on high-dimensional manifolds, the investigation of these questions has accompanied the development of the subject since its beginnings: The fibering theorem of Browder-Levine \cite{Browder-Levine(1966)} was an early application of surgery techniques and the $h$-cobordism theorem. Further results have been obtained by Farrell \cite{Farrell(1971)} and Siebenmann \cite{Siebenmann(1970a)} for $B=S^1$, using the $s$-cobordism theorem and computations of the Whitehead group of semi-direct products $G\rtimes_\alpha \ZZ$. 

Casson \cite{Casson(1967)} pioneered the study of fibering questions for higher-dimensional base manifolds by considering $B=S^n$, applying techniques of surgery theory. Quinn \cite{Quinn(1970)} was the first to systematically describe block structure spaces using the $L$-theoretic assembly map and to develop a general obstruction theory to ``block fibering'' a given map. 

In the $Q$-manifold world, Chapman-Ferry \cite{Chapman-Ferry(1978)} obtained the most general results available so far. Most recently, in the finite-dimensional case, joint work of the author with Farrell and L\"uck \cite{Farrell-Lueck-Steimle(2009)} shows how the obstructions defined by Farrell and Siebenmann over $S^1$ can be generalized to arbitrary base spaces (where, however, they stop being a complete set of obstructions). 

In the light of the development of parametrized $h$-cobordism theory since the 1970s, this work re-focuses on the role of algebraic $K$-theory in fibering questions. As we will see, higher algebraic $K$-theory of spaces provides obstructions for both questions  of existence and uniqueness. Moreover, the vanishing of these obstructions has a concrete geometric meaning: The obstructions constructed in this work form a complete set of obstructions to fibering manifolds \emph{stably}. Here stabilization refers to crossing the total space with disks of sufficiently high dimension, thus leaving the category of closed manifolds. In fact, the theory of stably fibering manifolds is best formulated and proved entirely in the world of compact manifolds with boundary (which we call compact manifolds for short).

More concretely, let $f\colon M\to B$ be a map between compact topological manifolds. Then, by definition, $f$ \emph{stably fibers} if, for some $n\in\NN$, the composite
\[f\circ\Proj\colon M\times D^n\to M\to B\]
is homotopic to the projection map of a fiber bundle whose fibers are compact topological manifolds. The following questions will be dealt with: 
\begin{itemize}
\item When does $f$ stably fiber?
\item How many different ways are there for $f$ to stably fiber? Denote by $C$ the set of all bundle maps $g\colon M\times D^n\to B$ for some $n$ which are homotopic to $f\circ\Proj$. We define two elements to be \emph{equivalent}, and write $g\sim g'$, if after further stabilizing there is a bundle homeomorphism $i\colon M\times D^N\to M\times D^N$ from $g$ to $g'$ (i.e.~$i\circ g=g'$), such that $i$ is homotopic to the identity map. The precise question is then: How can $C/{\sim}$ be described?
\end{itemize}

Factor $f$ into a homotopy equivalence $\lambda$ followed by a fibration $p$. Under a finiteness assumption on the fiber $F$ of $p$, two obstructions will be defined:
\begin{itemize}
\item $\Wall(p)\in H^0(B;\Wh(F))$, which is an obstruction to reducing $p$ to a fiber bundle of compact manifolds. Here the term $\Wh$ is used to denote the (connective topological) Whitehead spectrum as defined by Waldhausen. It is defined in terms of algebraic $K$-theory of spaces and is closely connected to the classification of parametrized $h$-cobordisms. The term $H^0(B;\Wh(F_b))$ denotes a specific generalized cohomology group of $B$ with respect to the Whitehead spectrum of the fibers, where the coefficients are twisted according to the data of the fibration $p$. 
\item If $\Wall(p)$ vanishes, then there is a second obstruction $o(f)$ lying in the cokernel of a specific map
\[\pi_0(\beta)\colon H^0(B;\Omega\Wh(F))\to \Wh(\pi_1 M).\]
\end{itemize}
See section \ref{section:definition} for a precise explanation of terms. 

\begin{thm}[Existence]\label{main_result_1}
The map $f$ stably fibers if and only if the fibers of $p$ are finitely dominated, and $\Wall(p)$ and $o(f)$ both vanish.
\end{thm}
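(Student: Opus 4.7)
The plan is to prove the two implications separately. For necessity, if $f$ stably fibers then after stabilization $f\circ\Proj$ is homotopic to the projection of a bundle with compact manifold fibers; hence the fibers of $p$ (homotopy equivalent to those of the bundle) are finitely dominated. One then has to verify that both $\Wall(p)$ and $o(f)$ are invariant under the stabilization $M\pathto M\times D^n$, after which it suffices to check that they vanish when $p$ is already fiberwise equivalent to a bundle of compact manifolds. The latter is essentially built into the construction of the obstructions: $\Wall$ vanishes on fiberwise finite bundles, and $o$ vanishes because $\lambda$ can then be chosen to be a bundle homeomorphism, giving $\tau(\lambda)=0$.

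For sufficiency, I would proceed in two stages corresponding to the two obstructions. First, suppose the fibers of $p$ are finitely dominated and $\Wall(p)=0$. The class $\Wall(p)\in H^0(B;\Wh(F))$ is a parametrized version of Wall's finiteness obstruction; its vanishing produces---possibly after stabilizing by $\times D^n$, which trades finitely-dominated for finite---a fiberwise equivalence over $B$ between $p$ and a locally trivial bundle $\bar p\colon \bar E\to B$ of compact topological manifolds with boundary. Composing with $\lambda$ yields a homotopy equivalence $\bar\lambda\colon M\to \bar E$ over $B$ between compact manifolds.

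In the second stage I would use $o(f)=0$. By construction $o(f)$ is the image of the Whitehead torsion $\tau(\bar\lambda)\in \Wh(\pi_1 M)$ in $\coker(\pi_0(\beta))$, where $\beta$ relates parametrized $h$-cobordisms of $\bar p$ to ordinary $h$-cobordisms on $M$. Its vanishing provides a parametrized $h$-cobordism $W\to B$ on $\bar p$ whose associated torsion equals $\tau(\bar\lambda)$. Attaching $W$ fiberwise to $\bar E$ converts $\bar\lambda$ into a simple homotopy equivalence $M\to \bar E\cup_{\bar p} W$ over $B$. The topological $s$-cobordism theorem, applied after crossing the source with a disk of sufficiently large dimension, then promotes this equivalence to a bundle homeomorphism over $B$, producing the required stable bundle structure on $f\circ\Proj$.

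The principal difficulty is matching the algebraically defined obstructions with honest geometric ones. For $\Wall(p)$, this means showing that the cohomology class assembles the fiberwise Wall obstructions into a single invariant whose vanishing produces an actual fiberwise bundle replacement (a parametrized realization theorem). For $o(f)$, the crux is that parametrized $h$-cobordisms on $\bar p$ realize exactly the subgroup $\im(\pi_0(\beta))\subseteq \Wh(\pi_1 M)$, so that every element in that subgroup can be cancelled geometrically; this relies on Waldhausen's parametrized $h$-cobordism theorem and its compatibility with the classical $s$-cobordism theorem. Stabilization-invariance, needed for the necessity direction, is also delicate because crossing with $D^n$ shifts the Whitehead spectrum, and one has to exhibit the obstructions as genuinely stable invariants rather than only limits along the stabilization tower.
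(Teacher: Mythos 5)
Your overall architecture---necessity via stabilization--invariance of the obstructions, sufficiency in two stages (use $\Wall(p)=0$ to replace $p$ by a genuine bundle $\bar p$ via Theorem \ref{fqm:RRwc}, then realize $\tau(\bar\lambda)$ by a fiberwise $h$-cobordism)---agrees with the paper's up to the point where the paper stops unpacking: the paper deduces the second stage formally from the homotopy pullback square of Theorem \ref{main_result_of_paper1} (quoted from \cite{Steimle(2011)}) together with Lemma \ref{fqm:stable_question_formulated_with_structure_spaces}, whereas you attempt the geometry directly. The genuine gap is your final step. After glueing the fiberwise $h$-cobordism you have a homotopy equivalence $\mu\colon M\to P$ with $\tau(\mu)=0$, where $P$ is the total space of a bundle over $B$, and you claim the topological $s$-cobordism theorem, after crossing with a disk, promotes $\mu$ to a bundle homeomorphism over $B$. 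The $s$-cobordism theorem turns torsion-zero $h$-cobordisms into products; it does not turn simple homotopy equivalences into homeomorphisms, and in fact a simple homotopy equivalence between compact manifolds need not become homotopic to a homeomorphism after crossing with $D^N$: for a fake complex projective space $h\colon Z\to\CC P^{2n+1}$ with $(h^*)^{-1}\calL(Z)\neq\calL(\CC P^{2n+1})$ one has $\tau(h)=0$, yet $h\times\id_{D^N}$ is never homotopic to a homeomorphism, by topological invariance of rational Pontryagin classes. What the theorem needs is weaker---$M\times D^n\to B$ is a bundle whose fiber is allowed to differ from $F\times D^n$---and to get it one must first correct the tangential data, replacing $P$ by the disk bundle of $(\mu^{-1})^*TM-TP$ (still a bundle over $B$, with fatter fibers) so that $\mu$ becomes stably tangential, and then invoke Wall's stable classification of thickenings rather than the $s$-cobordism theorem. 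This is exactly the sequence of moves in the proof of Theorem \ref{fqm:change_of_total_space_revisited}, which is also where the concordance-stability ranges enter to realize the disassembled torsion by an actual fiberwise $h$-cobordism.

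Two smaller points. For the statement to make sense one must show that $o(f)$ is independent of the chosen bundle factorization $f=\bar p\circ\bar\lambda$; the paper checks this via the composition formula (a change of factorization alters the torsion by that of a fiber homotopy equivalence, which disassembles and hence dies in $\coker\pi_0(\beta)$), and your write-up omits it. In the necessity direction, ``$\lambda$ can be chosen to be a bundle homeomorphism'' should read ``$\lambda$ can be chosen homotopic to the identity''; the conclusion $\tau(\lambda)=0$ is the same.
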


\begin{thm}[Classification]\label{main_result_2}
If $f$ stably fibers, then the set $C/{\sim}$ is in bijection with the kernel of the map $\pi_0(\beta)$.
\end{thm}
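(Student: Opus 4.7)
The plan is to derive the classification from a relative version of Theorem~\ref{main_result_1} applied to the comparison of two stable fiberings. Fix a basepoint $g_0\in C$ (available by hypothesis). Given any other $g\in C$, after stabilizing both to a common $M\times D^N$ I would choose a homotopy $H\colon M\times D^N\times I\to B$ from $g_0$ to $g$ (available because each is homotopic to $f\circ\Proj$) and form
\[
G\colon M\times D^N\times I\lto B\times I,\qquad (x,t)\mapsto (H(x,t),t).
\]
This is a map between compact manifolds whose restriction to $M\times D^N\times\partial I$ is the fiber bundle $g_0\sqcup g$. A relative version of Theorem~\ref{main_result_1} would apply to $G$ rel this boundary fibering: the $\Wall$-type obstruction vanishes automatically since the boundary already fibers, and the secondary obstruction lives naturally in $\Wh(\pi_1 M)$ rather than its cokernel, since the boundary data fixes the indeterminacy. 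I would then show this element lies in $\ker(\pi_0(\beta))$: its image under $\pi_0(\beta)$ corresponds to the boundary restriction of the associated parametrized $h$-cobordism data, which vanishes by construction.

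This defines a map $\Phi\colon C/{\sim}\to\ker(\pi_0(\beta))$, sending $[g]$ to this invariant, once one verifies independence of the stabilization $N$, of the homotopy $H$, and of the representative of $[g]$. Surjectivity would follow by a realization argument: given $\kappa\in\ker(\pi_0(\beta))$, view $\kappa$ as a parametrized $h$-cobordism $W\to B$ on the fibers of $g_0$ (via the $\pi_0$-level description of the Whitehead spectrum) and glue $W$ onto $g_0$ to obtain a new bundle $M_W\to B$. Since $\pi_0(\beta)(\kappa)=0$ in $\Wh(\pi_1 M)$, the total $h$-cobordism is trivial in Whitehead torsion, so after stabilization $M_W\cong M\times D^{N+k}$, giving the required $g\in C$ with $\Phi([g])=\kappa$. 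Injectivity would follow directly from the relative existence theorem: if $\Phi([g])=0$, then $G$ stably fibers rel boundary, which amounts to a bundle homeomorphism $M\times D^{N+k}\to M\times D^{N+k}$ from $g_0$ to $g$ homotopic to the identity, i.e.\ $g\sim g_0$.

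The hard part will be formulating and proving the relative version of Theorem~\ref{main_result_1} and identifying its obstruction group precisely with $\ker(\pi_0(\beta))$. This requires a careful examination of how the map $\beta$ is defined in terms of the parametrized Whitehead spectrum $\Wh(F)$, and an analysis of the cohomology long exact sequence for the pair of pairs $(B\times I,B\times\partial I)=(B,\emptyset)\times(I,\partial I)$ that converts the prescribed boundary fibering into the kernel condition. A secondary issue is checking that $\Phi$ is stable under further stabilization in the $D^n$-direction, so that it descends to $C/{\sim}$ and takes values in the stable group; once these foundational points are handled, the bijection follows formally from the relative form of Theorem~\ref{main_result_1}.
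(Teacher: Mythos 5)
Your proposal takes a genuinely different route from the paper, but it has both a conceptual confusion and a gap that together make it incomplete as written.

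The paper's actual proof is very short and does not involve any relative version of Theorem~\ref{main_result_1}. Lemma~\ref{fqm:stable_question_formulated_with_structure_spaces}(iii) already identifies $C/{\sim}$ with the preimage of $[\lambda]$ under the geometric assembly map $\pi_0(\alpha)\colon\pi_0\calS_\infty(p')\to\pi_0\calS_\infty(E')$. The homotopy pullback square of Theorem~\ref{main_result_of_paper1} then translates that preimage into $\pi_0(\beta')^{-1}[\tau(\lambda)]$; since $\beta'$ is a map of infinite loop spaces and the fiber is nonempty (as $f$ stably fibers), this coset is in bijection with $\ker(\pi_0(\beta'))$, and one passes from $\beta'$ to $\beta$ by fiber homotopy invariance. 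In other words, all of the ``relative'' content you are reaching for is already packaged in the pullback square, which is the main theorem of the companion paper \cite{Steimle(2011)}.

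Two concrete problems with your argument as stated. First, there is an inconsistency about where the difference invariant $\Phi([g])$ lives: you say the relative secondary obstruction ``lives naturally in $\Wh(\pi_1 M)$'' and then that you will ``show this element lies in $\ker(\pi_0(\beta))$'', but $\ker(\pi_0(\beta))$ is a subgroup of $H^0(B;\Omega\Wh(F))$, the \emph{domain} of $\beta$, not a subset of $\Wh(\pi_1 M)$. The difference of two bundle structures on the same fibration is a higher (parametrized) Whitehead torsion, which is an element of $H^0(B;\Omega\Wh(F_b))$; the condition for the total spaces to agree stably is precisely that $\pi_0(\beta)$ of this class vanishes. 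Your writeup conflates the two groups. Second, and more substantively, you correctly flag that the ``hard part will be formulating and proving the relative version of Theorem~\ref{main_result_1}'', but this is not a technical detail to be checked later: that relative statement is essentially equivalent in content to Theorem~\ref{main_result_of_paper1}, i.e.\ to the pullback property of the square relating $\alpha$ and $\beta$. Your surjectivity argument (realizing a class in $\ker\pi_0(\beta)$ by gluing a parametrized $h$-cobordism onto $g_0$) and your injectivity argument (a rel-boundary version of the existence theorem) are exactly the two directions of that square. So the proposal does not simplify or bypass the main input; it rederives it by hand, without proof, and with the target group misidentified. Once you replace the ad hoc relative Theorem~\ref{main_result_1} by an explicit appeal to Lemma~\ref{fqm:stable_question_formulated_with_structure_spaces}(iii) and Theorem~\ref{main_result_of_paper1}, and place the invariant in $H^0(B;\Omega\Wh(F_b))$ rather than $\Wh(\pi_1 M)$, the argument collapses to the one in the paper.
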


In a sense, this paper is a companion paper to \cite{Steimle(2011)} since Theorems \ref{main_result_1} and \ref{main_result_2} are rather formal consequences of the results from that paper in combination with the ``Riemann-Roch theorem with converse'' by Dwyer-Weiss-Williams \cite{Dwyer-Weiss-Williams(2003)}. Apart from defining the obstructions (in section \ref{section:definition}) and proving Theorems \ref{main_result_1} and \ref{main_result_2} (in section \ref{section:proof_of_main_results}), we provide several examples, emphasizing the ``change of total space problem'' where the more complicated Wall obstruction does not play a role. In section \ref{fqm:section_examples_II} we show that the results of \cite{Chapman-Ferry(1978)} on fibering compact $Q$-manifolds over compact ANRs can be viewed as special cases of the results presented here. The content of section \ref{fqm:section_comparison_to_FLS} is to compare the obstructions defined here with those of \cite{Farrell-Lueck-Steimle(2009)}. An appendix collects the results needed to relate the stable fibering problem presented here with the $Q$-manifold fibering problem.

\subsection*{Acknowledgement}

This work is part of my PhD thesis, written at the University of M\"unster. I thank my advisor Wolfgang L\"ueck for his constant encouragement and support and Bruce Williams for drawing my interest to the stable fibering problem and sharing his ideas. Moreover I am grateful to Arthur Bartels, Diarmuid Crowley, Bruce Hughes, Matthias Kreck and Tibor Macko for many discussions and suggestions.


\section{Definition of the obstructions}\label{section:definition}

Throughout this section, let $f\colon M\to B$ be a map between compact topological manifolds, and let $f=p\circ\lambda$ be a factorization into a homotopy equivalence followed by a fibration $p\colon E\to B$.

A functor from spaces to spaces is called homotopy invariant if it sends homotopy equivalences to homotopy equivalences. Given such a functor $Z$ and a fibration $p\colon E\to B$, with fiber $F$, Dwyer-Weiss-Williams \cite{Dwyer-Weiss-Williams(2003)} define a fibration $Z_B(E)\to B$, with fiber $Z(F)$, essentially by applying $Z$ ``fiber-wise'' to $p$. As Waldhausen's functor $A(X)$ is homotopy invariant, this construction leads to a fibration $A_B(E)\to B$.

Suppose that the fiber $F$ of $p$ is finitely dominated (see below). In this situation the ``parametrized $A$-theory characteristic'' \cite{Dwyer-Weiss-Williams(2003)} defines a section of the fibration $A_B(E)\to B$, up to homotopy:
\[\chi(p)\in\sections{A_B(E)}{B}\]
The natural transformation from $A(X)$ to the connective topological Whitehead spectrum $\Wh(X)$ induces a map
\[\sections{A_B(E)}{B}\to\sections{\Wh_B(E)}{B}.\]

\begin{defn}
The \emph{parametrized Wall obstruction} 
\[\Wall(p)\in\pi_0\sections{\Wh_B(E)}{B}=: H^0(B;\Wh(F))\]
of the fibration $p$ is the image of the parametrized $A$-theory characteristic under this map. 
\end{defn}

The parametrized Wall obstruction only depends of the fiber homotopy type of $p$ in the following sense: If $\varphi\colon p\to p'$ is a fiber homotopy equivalence between fibrations with fibers $F$ and $F'$ respectively, then the induced isomorphism
\[\varphi_*\colon H^0(B;\Wh(F))\to H^0(B;\Wh(F'))\]
sends $\Wall(p)$ to $\Wall(p')$. In this sense, $\Wall(p)$ only depends on $f$ rather than on the choice of factorization $f=p\circ\lambda$.

It follows from the ``Riemann-Roch theorem with converse'' \cite{Dwyer-Weiss-Williams(2003)}:

\begin{thm}\label{fqm:RRwc}
The parametrized Wall obstruction is zero if and only if there is a factorization $f=p\circ \lambda$ where $\lambda$ is a homotopy equivalence and $p$ is a fiber bundle, with fibers compact manifolds.
\end{thm}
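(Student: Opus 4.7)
The approach is to invoke the Dwyer-Weiss-Williams ``Riemann-Roch theorem with converse'' directly; the content of Theorem~\ref{fqm:RRwc} is essentially a translation of that result into the language of the parametrized Wall obstruction as defined here. So the plan is to isolate the relevant form of the DWW statement and then do the minor bookkeeping to pass between ``$p$ is fiber homotopy equivalent to a bundle'' and ``$f$ admits a factorization through a bundle.''

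For the $(\Leftarrow)$ direction, suppose $f = p' \circ \lambda'$ with $\lambda'$ a homotopy equivalence and $p'$ a fiber bundle of compact topological manifolds. The DWW Riemann-Roch theorem provides for such $p'$ a canonical lift of $\chi(p')$ from $\Gamma(A_B(E') \to B)$ through the smooth Whitehead spectrum construction; concretely, the image of $\chi(p')$ in $\Gamma(\Wh_B(E') \to B)$ is null-homotopic, so $\Wall(p') = 0$. Next I would invoke the fact, already recorded in the excerpt, that $\Wall$ depends only on the fiber homotopy type: since any two fibrant replacements of $f$ are fiber homotopy equivalent over $B$, the obstruction computed from the original $p$ agrees under the induced isomorphism with $\Wall(p') = 0$, hence vanishes.

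For the $(\Rightarrow)$ direction, assume $\Wall(p) = 0$, so that the finiteness of $F$ is already built into the setup (and indeed implicit in the definition of $\chi(p)$). The ``converse'' portion of the DWW theorem says precisely: if $p\colon E \to B$ is a fibration with finitely dominated fibers and the image of $\chi(p)$ in $\pi_0\Gamma(\Wh_B(E) \to B)$ is zero, then $p$ is fiber homotopy equivalent, over $B$, to some fiber bundle $p'\colon E' \to B$ with fibers compact topological manifolds. Let $\psi\colon E \to E'$ be such a fiber homotopy equivalence. Then $\lambda' := \psi \circ \lambda \colon M \to E'$ is a homotopy equivalence, and $p' \circ \lambda' = p' \circ \psi \circ \lambda$ is fiber homotopic to $p \circ \lambda = f$. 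Replacing $\lambda'$ by a homotopic map if necessary (using the homotopy lifting property of $p'$ against the homotopy $p' \circ \lambda' \simeq f$), we obtain the required factorization $f = p' \circ \lambda'$ on the nose.

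The main point of difficulty is purely a matter of citation hygiene: one must match the formulation of the parametrized Wall obstruction given here (the image of $\chi(p)$ in sections of $\Wh_B(E)$) with the exact output of \cite{Dwyer-Weiss-Williams(2003)}, including verifying that the ``compact manifold'' hypothesis on fibers in the DWW result is exactly what is needed here (topological, with boundary allowed). Once this identification is made, both directions are essentially immediate, and the only remaining work is the last-line adjustment replacing a fiber homotopic factorization by an equal one, which is standard.
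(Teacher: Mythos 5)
Your proposal is correct and follows the same route as the paper, which itself proves Theorem~\ref{fqm:RRwc} simply by citing the Dwyer--Weiss--Williams ``Riemann-Roch theorem with converse'' together with the fiber-homotopy invariance of $\Wall(p)$ recorded just before the statement. The extra bookkeeping you supply (passing between a fiber homotopy equivalence $p\simeq p'$ and an on-the-nose factorization $f=p'\circ\lambda'$ via the homotopy lifting property) is exactly the routine translation the paper leaves implicit.
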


Suppose now that we are given such a factorization. Suppose for simplicity that $B$ is connected and consider the composite
\[\beta\colon \sections{\Omega\Wh_B(E)}{B}\to\Omega\Wh(F)\xrightarrow{\chi(B)\cdot i_*} \Omega\Wh(E)\]
where the first map is the restriction map onto a chosen base point of $b$ and the second map is induced by the inclusion $F:=p^{-1}(b)\to E$ followed by multiplication with the Euler characteristic $\chi(B)\in\ZZ$.

\begin{defn}
The fibering obstruction $o(f)$ is the class of the Whitehead torsion $\tau(\lambda)$ in the cokernel of
\[\pi_0(\beta)\colon H^0(B;\Omega\Wh(F))\to\pi_0 \Omega\Wh(E)\cong \Whp{E}.\]
\end{defn}

\begin{rem}
\begin{enumerate}
\item Recall that a space $X$ is called finitely dominated if there is a finite CW complex $Y$ together with maps $i\colon X\to Y$ and $r\colon Y\to X$ such that $r\circ i\simeq\id_X$.
\item If $X$ is not connected, then the group $\Whp{X}$ should be read as the direct sum of the Whitehead groups of $\pi_1(C)$ for all path components $C$ of $X$.
\item If $B$ is not connected, the map $\beta$ is defined as the sum of the corresponding maps for the individual components.
\item See section \ref{section:proof_of_main_results} for a proof that the fibering obstruction does not depend on the choice of factorization $f=p\circ\lambda$.
\end{enumerate}
\end{rem}

We finish this section by a spectral sequence analysis of the parametrized Wall obstruction.

\begin{thm}\label{fqm:spectral_sequence}
\begin{enumerate}
\item Let $E\to B$ be a fibration over a CW complex, with fiber $F_b$ over $b$. There is a 4th quadrant spectral sequence
\[E_2^{p,q} = H^p(B;\pi_{-q}\Wh(F_b)) \Longrightarrow H^{p+q}(B;\Wh(F_b)),\]
where the $E_2$-term consists of ordinary cohomology with twisted coefficients in the system of abelian groups $\{b\mapsto\pi_{-q}\Wh(F_b)\}$.
\item If $B$ is $d$-dimensional, $d<\infty$, then the corresponding filtration 
\[\dots\supset\mathcal{F}^{p,q}\supset\mathcal{F}^{p+1,q-1}\supset\dots\]
of $H^{p+q}(B;\Wh(F_b))$ is finite, and the spectral sequence converges in the strongest possible sense, i.e.~we have
\begin{align*}
\mathcal{F}^{0,n} & =H^n(B;\Wh(F_b)) &\mathrm{for~all~}n\\
\mathcal{F}^{d+1,n-d-1} & =0 & \mathrm{for~all~}n\\
\mathcal{F}^{p,q}/\mathcal{F}^{p+1,q-1} & \cong E_\infty^{p,q} &\mathrm{for~all~}p,q
\end{align*}
\item Under the edge homomorphism 
\[e\colon H^0(B;\Wh(F_b))\to H^0(B;\pi_0\Wh(F_b))\subset \prod_{[b]\in\pi_0 B} \tilde K_0(\ZZ[\pi_1 F_b]),\]
the image of $\Wall(p)$ is the finiteness obstruction of the fiber. 
\item Suppose that all the fibers are homotopy equivalent to finite CW complexes, so that $e(\Wall(p))=0$. Let $\gamma\colon S^1\to B$ be a loop. The naturally defined secondary homomorphism
\[\ker(e)\to H^1(B;\pi_1\Wh(F_b)),\]
followed by the restriction map 
\[\gamma^*\colon H^1(B;\pi_1\Wh(F_b))\to H^1(S^1;\pi_1\Wh(F_b))\cong\Whp{F_b}_{\pi_1(S^1)}\]
(coinvariants under the $\pi_1(S^1)$-action) sends $\Wall(p)$ to the element defined by the Whitehead torsion of the fiber transport $t_\gamma$ along $\gamma$.
\end{enumerate}
\end{thm}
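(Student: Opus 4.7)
The plan is to build the spectral sequence in parts (i) and (ii) as the homotopy spectral sequence of the tower of fibrations obtained by restricting sections to the skeleta of $B$. Writing $\Gamma_p$ for the space of sections of the restriction of $\Wh_B(E)$ to $B^{(p)}$, a standard cell-attaching argument shows that the homotopy fiber of $\Gamma_p \to \Gamma_{p-1}$ is a product, indexed by the $p$-cells of $B$, of copies of $\Omega^p\Wh(F_b)$. This identifies the $E_1$-term with the cellular cochain complex of $B$ with values in the local system $\{b\mapsto\pi_{-q}\Wh(F_b)\}$ determined by the fibration, and the $E_2$-term with ordinary cohomology with these twisted coefficients. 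When $\dim B<\infty$, the tower is finite, hence the filtration is finite and convergence in the strong sense is automatic.

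For part (iii), observe that the edge homomorphism $e$ is the restriction of sections to the $0$-skeleton, which amounts to evaluating at each point $b\in B$. Under this evaluation the parametrized $A$-theory characteristic $\chi(p)$ restricts to the unparametrized $A$-theory Euler characteristic of the fiber $F_b$. A theorem of Waldhausen identifies the image of this class under the natural map $\pi_0 A(F_b)\to\pi_0\Wh(F_b)\cong\tilde K_0(\ZZ[\pi_1 F_b])$ with the Wall finiteness obstruction of $F_b$, which yields (iii).

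For part (iv), by naturality of the whole construction under pullback along $\gamma$, it suffices to treat the case $B=S^1$, in which case $p$ is fiber homotopy equivalent to the mapping torus of the self-equivalence $t:=t_\gamma\colon F_b\to F_b$. In this case $\Wall(p)$ can be computed directly: lifting to a fundamental domain $[0,1]\to S^1$ and trivializing the fibration over it, the section $\chi(p)$ becomes constant with value $\chi(F_b)\in A(F_b)$, and the failure of this trivialization to descend to $S^1$ is measured by $t_*$. Projecting from $A$-theory to $\Wh$, the resulting $1$-cocycle becomes the Whitehead torsion $\tau(t)\in\Whp{F_b}$, and passing to coinvariants under the $\pi_1(S^1)$-action (which acts via $t_*$) gives precisely the class asserted by the theorem.

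The main obstacle will be making this last step rigorous: one must show that the $1$-cocycle extracted from the Dwyer-Weiss-Williams parametrized $A$-theory characteristic of a mapping torus really is, after projection to $\Wh$, the Whitehead torsion of the monodromy. This is a parametrized incarnation of the classical fact that the Whitehead torsion of a mapping torus is detected by the torsion of the monodromy, but its verification requires unraveling the DWW construction explicitly over $S^1$ and matching its output with the classical Whitehead torsion via Waldhausen's identification of $\pi_1\Wh(F_b)$ with $\Whp{F_b}$.
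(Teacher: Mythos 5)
Your proposal is correct and follows essentially the same route as the paper: the paper builds the spectral sequence as the Bousfield--Kan spectral sequence of $\holim_{\sigma\in\simp B_\bullet}\Wh(E_\sigma)$, which is the same standard section-space spectral sequence you obtain from the skeletal tower, and it likewise proves (iii) and (iv) by examining the edge and secondary homomorphisms. The step you flag as the main obstacle in (iv) --- matching the $1$-cocycle extracted from the DWW characteristic of a mapping torus with the classical Whitehead torsion of the monodromy --- is precisely what the paper also does not carry out here, deferring instead to the comparison of higher and classical Whitehead torsion in \cite{Steimle(2011)} and to \cite{Steimle(2010)} for details.
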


\begin{rem}
In the situation of (iv), the Whitehead torsion $\tau(t_\gamma)\in\Wh(\pi_1 F_b)$ is not well-defined, since $F_b$ comes with no CW structure. However, after choosing a homotopy equivalence $h\colon X\to F_b$ from some CW complex $X$, one may consider the Whitehead torsion 
\[h_*\tau(h^{-1}\circ t_\gamma\circ h)\in\Wh(\pi_1 F_b);\]
it is not hard to see that its class in $\Wh(\pi_1 F_b)_{\pi_1(S^1)}$ is independent of the choice of $X$ and $h$.
\end{rem}

\begin{proof}[Proof of Theorem \ref{fqm:spectral_sequence}]
For part (i) and (ii), assume that $B$ is the geometric realization of a simplicial set $B_\bullet$. The rule which assigns to a simplex $\sigma$ of $B_\bullet$ the pull-back $E_\sigma:=\vert\sigma\vert^*E$ defines a functor on the simplex category $\simp B_\bullet$. There is a weak homotopy equivalence \cite{Dwyer-Weiss-Williams(2003)}
\[\sections{\Wh_B(E)}{B}\simeq \holim_{\sigma\in\simp B_\bullet} \Wh(E_\sigma).\]
The spectral sequence in question is the Bousfield-Kan spectral sequence of the right-hand side. Part (iii) and (iv) follow from a close examination of the homomorphisms in question and the identification of the higher Whitehead torsion with the classical one in the unparametrized setting \cite{Steimle(2011)}. For more details, consult \cite{Steimle(2010)}.
\end{proof}

\section{Proof of Theorems \ref{main_result_1} and \ref{main_result_2}}\label{section:proof_of_main_results}

Given a fibration $p\colon E\to B$, the \emph{structure space} $\calS_n(p)$ is defined as the geometric realization of the simplicial set $\calS_n(p)_\bullet$, where a $k$-simplex is given by a commutative diagram
\begin{equation}\label{diag:structure_on_fibration}\xymatrix{
E' \ar[rd]_q \ar[rr]^\lambda_\simeq && E\times \Delta^k \ar[ld]^{p\times\id_{\Delta^k}} \\
& B\times \Delta^k
}\end{equation}
in which $q$ is a bundle of compact topological manifolds and $\lambda$ is a (fiber) homotopy equivalence. The simplicial operations are induced by pull-back. (Strictly speaking we always have to assume that $E'$ is a subset of $B\times\calU$ for a chosen ``universe'' $\calU$. See \cite{Steimle(2011)} for more details.)

If $B$ is a point, then we write $\calS_n(E)$ for $\calS_n(p)$. In the case where $B$ is a compact topological $k$-manifold, the \emph{geometric assembly map}
\[\alpha\colon\calS_n(p)\to\calS_{n+k}(E)\]
is essentially given by ``forgetting $B$''. More precisely it sends a simplex $(q,\lambda)$ as in the diagram \eqref{diag:structure_on_fibration} to the simplex $(q',\lambda)$ where $q'\colon E'\to\Delta^k$ is the composite of $q$ with the projection, and $\lambda$ is now considered as a fiber homotopy equivalence over $\Delta^k$ only.

Here is the key observation that connects the fibering question with the geometric assembly map.

\begin{lem}\label{fqm:question_formulated_with_structure_spaces}
let $f\colon M^{n+k}\to B^k$ be a map between compact topological manifolds, and let $f=p\circ \lambda$ be a factorization of $f$ into a homotopy equivalence, followed by a fibration $p\colon E\to B$.
\begin{enumerate}
\item The fibration $p$ is fiber homotopy equivalent to a bundle of compact topological $n$-manifolds if and only if $\calS_n(p)$ is non-empty.
\item $f$ is homotopic to a bundle of $n$-manifolds if and only if the element defined by $\lambda\colon M\to E$ is in the image of the map $\pi_0(\alpha)\colon\pi_0\calS_n(p)\to\pi_0\calS_{n+k}(E)$.
\item If $g,g'\colon M\to B$ are two fiber bundle projections homotopic to $f$, say that they are equivalent if there is a commutative diagram
\[\xymatrix{
M \ar[rr]^i_\cong \ar[rd]_g && M \ar[ld]^{g'}\\
& B
}\]
where $i$ is a homeomorphism which is homotopic to the identity. Then, the equivalence classes of fiber bundle projections homotopic to $f$ are in bijection to the preimage of $[\lambda]$ under the map $\alpha\colon\pi_0\calS_n(p)\to\pi_0\calS_{n+k}(E)$.
\end{enumerate}
\end{lem}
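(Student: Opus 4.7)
The plan is to unwind the definition of $\calS_n(p)_\bullet$ and of the assembly map $\alpha$ in each case, moving between bundle structures on $f$ and simplices of the structure space via the homotopy lifting property of $p$ and the triviality of bundles over the contractible base $\Delta^1$.

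Part (i) is essentially tautological: a zero-simplex of $\calS_n(p)_\bullet$ is by definition a fiber homotopy equivalence from a bundle of compact $n$-manifolds to $p$. For the forward direction of (ii), given a bundle projection $g\colon M\to B$ homotopic to $f = p\lambda$, I would lift a homotopy $f\simeq g$ through $p$ starting at $\lambda$, obtaining a fiber homotopy equivalence $\lambda'_g\colon M\to E$ over $B$ with $\lambda'_g\simeq\lambda$. The pair $(g,\lambda'_g)$ is then a zero-simplex of $\calS_n(p)$ whose image under $\alpha$, namely $(M,\lambda'_g)$, represents $[\lambda]$ in $\pi_0\calS_{n+k}(E)$ because the homotopy $\lambda\simeq\lambda'_g$ constitutes an explicit one-simplex of $\calS_{n+k}(E)$. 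Conversely, given a class $[(q,\mu)]\in\pi_0(\alpha)^{-1}([\lambda])$, the equality $[\mu]=[\lambda]$ in $\pi_0\calS_{n+k}(E)$, after trivializing the underlying bundle over $\Delta^1$ of a (zig-zag of) connecting one-simplex(es), unfolds into a homeomorphism $i\colon E'\to M$ together with a homotopy $\mu\simeq\lambda\circ i$; then $g := q\circ i^{-1}\colon M\to B$ is a bundle projection homotopic to $f$. This proves (ii); moreover, the mapping cylinder of the bundle isomorphism $i\colon(E',q,\mu)\to(M,g,\mu\circ i^{-1})$ provides a one-simplex of $\calS_n(p)$ realising $[q,\mu]=[g,\mu\circ i^{-1}]$, showing that the map $[g]\mapsto[g,\lambda'_g]$ is surjective onto $\pi_0(\alpha)^{-1}([\lambda])$.

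For well-definedness of this map, if $g' = g\circ i$ with $i$ a homeomorphism homotopic to $\id_M$, then $\lambda'_g\circ i$ is a valid choice of lift for $g'$ (it is homotopic to $\lambda$ since $i\simeq\id_M$), and the mapping cylinder of the bundle isomorphism $i\colon(M,g',\lambda'_g\circ i)\to(M,g,\lambda'_g)$ provides a one-simplex of $\calS_n(p)$ connecting the corresponding zero-simplices; independence of the choice of $\lambda'_g$ follows similarly. The main technical step and expected obstacle is injectivity: given a one-simplex in $\calS_n(p)$ connecting $(g,\lambda'_g)$ and $(g',\lambda'_{g'})$, I would trivialize its underlying bundle over $\Delta^1$ to extract a continuous family of bundle projections $\bar g_t\colon M\to B$ with $\bar g_0=g$, a self-homeomorphism $\phi\colon M\to M$ (accounting for the mismatch between the trivialization and the $t=1$ identification) with $\bar g_1\circ\phi=g'$, and an over-$B$ homotopy $\lambda'_g\simeq\lambda'_{g'}\circ\phi^{-1}$. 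Cancelling through the homotopy equivalence $\lambda$ (using $\lambda'_g,\lambda'_{g'}\simeq\lambda$) forces $\phi\simeq\id_M$, and the topological isotopy extension theorem applied to the concordance $\bar g_t$ yields an ambient isotopy $\Phi_t$ of $M$ with $\bar g_t\circ\Phi_t=g$; setting $i:=\Phi_1^{-1}\circ\phi$ gives $g\circ i=g'$ with $i\simeq\id_M$, hence $g\sim g'$. The hard part throughout is this passage from simplicial paths in the structure space to ambient geometric data, for which I would rely on the structure-space framework developed in \cite{Steimle(2011)}.
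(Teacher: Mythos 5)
Your proof is correct and takes essentially the same approach as the paper, which likewise unwinds the simplicial definition of $\calS_n(p)_\bullet$ and the assembly map, uses the Kan property, and dismisses the remaining verification as ``not hard to see.'' One small remark: the step you attribute to the topological isotopy extension theorem is really the covering homotopy property for fiber bundles (a bundle over $B\times\Delta^1$ is isomorphic to one pulled back from $B$), which is also what produces the strict bundle homeomorphism over $B$ that the paper reads off directly from a $1$-simplex.
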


\begin{proof}
(i) is true by definition, and (ii) follows from (iii). Statement (iii) is basically a close examination of the definition. 

Indeed, as $\calS_n(p)_\bullet$ is Kan, an element in the preimage of $[\lambda]$ under $\pi_0(\alpha)$ is given by a commutative diagram
\begin{equation}\label{fqm:representing_diagram1}
\xymatrix{
N \ar[rd]_q \ar[rr]^\varphi_\simeq && E \ar[ld]^p \\
& B
}\end{equation}
with $q$ a bundle of compact $n$-manifolds, such that $N$ is homeomorphic to $M$ via a map under which $\varphi$ corresponds to $\lambda$ up to homotopy. It defines the same element as the diagram
\[\xymatrix{
N' \ar[rd]_{q'} \ar[rr]^{\varphi'}_\simeq && E \ar[ld]^p \\
& B
}\]
if and only if both elements form the boundaries of a similar diagram over $B\times I$. This means that both diagrams extend to a diagram
\[\xymatrix{
N' \ar[rrd]_{q'} \ar[rr]^i_{\cong} \ar@/^2pc/[rrrr]^{\varphi'} && N \ar[d]^q \ar[rr]^{\varphi} && E\ar[lld]^p\\
&&B
}\]
with $i$ a homeomorphism of bundles over $B$, such that the lower triangles commute strictly and the upper triangle commutes up to a homotopy over $B$.

Suppose that $f$ is homotopic to a bundle $g$ of $n$-manifolds. Then a choice of homotopy from $f$ to $g$ induces a fiber homotopy equivalence $\varphi\colon M\to E$ from $g$ to $p$ together with a homotopy from $\varphi$ to $\lambda$. Setting in the diagram \eqref{fqm:representing_diagram1} $N:=M$ and $q:=g$ we obtain a corresponding element in the preimage of $[\lambda]$ under $\alpha$. It is not hard to see that this rule induces a bijection between equivalence classes of fiber bundle projections homotopic to $f$ and the preimage of $[\lambda]$.
\end{proof}

Denote by $I$ the unit interval and by $p\times I$ the obvious fibration $E\times I\to B$. The stabilization map
\[\sigma\colon\calS_n(p)\to\calS_{n+1}(p\times I)\]
sends $(q,\lambda)$ to $(q\times I,\lambda\times\id_I)$; let
\[\calS_\infty(p):=\hocolim_n \calS_n(p\times I^n).\]

Clearly the geometric assembly map extends to a stable version
\[\alpha\colon \calS_\infty(p)\to\calS_\infty(E).\]

Here is a stabilized version of Lemma \ref{fqm:stable_question_formulated_with_structure_spaces}. It follows from Lemma \ref{fqm:stable_question_formulated_with_structure_spaces} together with the fact that 
\[\colim_n \pi_0 \calS_n(p)\xrightarrow{\cong}\pi_0\hocolim_n \calS_n(p).\]

\begin{lem}\label{fqm:stable_question_formulated_with_structure_spaces}
\begin{enumerate}
\item The fibration $p$ is fiber homotopy equivalent to a bundle of compact topological manifolds if and only if $\calS_\infty(p)$ is non empty.
\item A map $f\colon M\to B$ stably fibers if and only if the element defined by $\lambda\colon M\to E$ is in the image of the map $\pi_0(\alpha)\colon\pi_0\calS_\infty(p)\to\pi_0\calS_\infty(E)$.
\item Recall the set $C/{\sim}$ from the introduction. There is a bijection from $C/{\sim}$ to the preimage of $[\lambda]$ under the map $\alpha\colon\pi_0\calS_\infty(p)\to\pi_0\calS_\infty(E)$.
\end{enumerate}
\end{lem}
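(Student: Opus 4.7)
The plan is to deduce each of the three assertions by applying the corresponding part of Lemma \ref{fqm:question_formulated_with_structure_spaces} to the stabilized data $(p \times I^n, \lambda \times \id)$ and then passing to the colimit in $n$, using the key identification $\colim_n \pi_0 \calS_n(p \times I^n) \xrightarrow{\cong} \pi_0 \calS_\infty(p)$ (filtered colimits of sets commute with $\pi_0$, and $\pi_0$ of a sequential homotopy colimit is the sequential colimit of $\pi_0$'s). Once this formal machinery is in place, each statement becomes a matter of unwinding definitions.

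For (ii), the stabilized map $f \circ \Proj\colon M \times D^n \to B$ factors as $(p \times \Proj_{D^n})\circ(\lambda \times \id)$, a homotopy equivalence followed by a fibration canonically fiber homotopy equivalent to $p \times I^n$. By definition, $f$ stably fibers precisely when $f \circ \Proj$ is homotopic to a bundle projection for some $n$. Part (ii) of the unstable lemma identifies this at each stage with $[\lambda \times \id]$ lying in the image of $\pi_0(\alpha)\colon \pi_0\calS_n(p \times I^n) \to \pi_0\calS_{n+k+n}(E \times I^n)$; passing to the colimit gives the claim. For (iii) a parallel unwinding works: the set $C/{\sim}$ is by definition the colimit over $n$ of equivalence classes of bundle projections in the homotopy class of $f \circ \Proj$, and the unstable part (iii) provides a compatible bijection at each stage (the stabilization maps on structure spaces send $q$ to $q \times I$ on both sides of the assembly map, matching the stabilization on $C/{\sim}$).

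For (i), $\calS_\infty(p)$ is non-empty iff $\calS_n(p \times I^n)$ is non-empty for some $n$, which by unstable (i) is iff $p \times I^n$ is fiber homotopy equivalent to a bundle of compact manifolds for some $n$. The subtlety, and the one genuine step beyond formal colimit bookkeeping, is that one still needs to conclude that $p$ \emph{itself} is fiber homotopy equivalent to such a bundle. This is exactly where Theorem \ref{fqm:RRwc} is invoked: the existence of such a bundle structure is equivalent to the vanishing of $\Wall(p)$, and since $\Wall$ depends only on the fiber homotopy type of $p$ (and the fibers of $p$ and $p \times I^n$ have the same homotopy type), $\Wall(p) = 0$ iff $\Wall(p \times I^n) = 0$.

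The main obstacle is thus not any of the colimit manipulations, which are essentially formal, but the step in (i) that compares a bundle structure on $p$ with a bundle structure on its stabilization; this relies essentially on the Dwyer--Weiss--Williams converse. All other steps reduce to unfolding the definitions of $\calS_\infty(p)$, $\alpha$, and $C/{\sim}$ and invoking the unstable lemma termwise.
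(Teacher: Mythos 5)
Your overall strategy --- apply Lemma \ref{fqm:question_formulated_with_structure_spaces} termwise to the stabilized data and pass to the colimit via $\colim_n\pi_0\calS_n(p\times I^n)\xrightarrow{\cong}\pi_0\calS_\infty(p)$ --- is exactly the paper's proof, and your treatment of (ii) and (iii) is correct. The one place you go astray is the step you single out in (i) as ``the one genuine step beyond formal colimit bookkeeping'': it is not a genuine subtlety, and invoking Theorem \ref{fqm:RRwc} there is both unnecessary and slightly off target, since that theorem is about factoring a map $f$ between compact manifolds and presupposes finitely dominated fibers, whereas (i) concerns an arbitrary fibration $p$. The correct (and trivial) observation is that the projection $E\times I^n\to E$ is a fiber homotopy equivalence over $B$, so $p$ and $p\times I^n$ are fiber homotopy equivalent fibrations; since ``being fiber homotopy equivalent to a bundle of compact manifolds'' is manifestly invariant under fiber homotopy equivalence, the property passes between $p$ and its stabilizations with no input from Dwyer--Weiss--Williams. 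With that replacement the argument is, as the paper intends, purely formal.
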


The main result of \cite{Steimle(2011)} was the construction of a ``parametrized Whitehead torsion''
\[\tau\colon \calS_\infty(p)\to\sections{\Omega\Wh_B(E)}{B}\]
whenever $p$ is a bundle of compact manifolds, such that the following holds:

\begin{thm}\label{main_result_of_paper1}
The diagram
\[\xymatrix{
{\calS_\infty(p)} \ar[rr]^\tau \ar[d]^\alpha && {\sections{\Omega\Wh_B(E)}{B}} \ar[d]^\beta\\
{\calS_\infty(E)} \ar[rr]^\tau && {\Omega\Wh(E)}
}\]
is a weak homotopy pull-back, with $\beta$ as in section \ref{section:definition}.
\end{thm}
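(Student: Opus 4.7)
The plan is standard for verifying a weak homotopy pullback: check that the induced map on vertical homotopy fibers over every basepoint $y\in\calS_\infty(E)$ is a weak equivalence. I would pick a representative $y=[q_0\colon E_0\to\ast,\ \lambda_0\colon E_0\to E]$ and compare $\hofib_y(\alpha)$ with $\hofib_{\tau(y)}(\beta)$, aiming to show that $\tau$ restricts to a weak equivalence between them.

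To analyze $\hofib_y(\alpha)$, I would observe that a simplex of this fiber amounts to a bundle-of-manifolds structure on $p$ over $B\times\Delta^k$, equipped with a path in $\calS_\infty(E)$ from its ``forget-$B$'' image to $y$. By the stable parametrized $h$-cobordism theorem of Waldhausen-Jahren-Rognes applied to the closed manifold $E_0$, two such structures with the same underlying closed manifold differ by a parametrized $h$-cobordism over $B$, so this fiber should be controlled by a space of parametrized Whitehead torsions on the bundle $q_0$, naturally valued in $\sections{\Omega\Wh_B(E)}{B}$.

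For $\hofib_{\tau(y)}(\beta)$, I would use the defining fibration sequence $\Omega\Wh(F_b)\to\Omega\Wh_B(E)\to B$ together with the Euler characteristic twist built into $\beta$; this identifies the fiber with sections of $\Omega\Wh_B(E)\to B$ carrying a prescribed value in $\Omega\Wh(E)$ over the basepoint. Using the Dwyer-Weiss-Williams model $\sections{\Omega\Wh_B(E)}{B}\simeq\holim_{\sigma\in\simp B_\bullet}\Omega\Wh(E_\sigma)$, the parametrized Whitehead torsion from \cite{Steimle(2011)} would be shown simplex-wise to realize the desired comparison, reducing on each simplex $\sigma$ to the classical identification of $h$-cobordism classes on the fiber $E_\sigma$ with Whitehead torsions in $\pi_0\Omega\Wh(E_\sigma)$.

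The main obstacle, by a wide margin, is the coherent construction of the parametrized Whitehead torsion itself and the verification of its compatibility with both the geometric assembly map $\alpha$ and the Euler-characteristic-twisted restriction map $\beta$. This coherence --- rather than the homotopy-fiber diagram chase sketched above --- is the technical substance of \cite{Steimle(2011)}, to which the paper explicitly attributes the theorem; my proof plan would therefore conclude by invoking the main result of that paper once the homotopy-fiber reformulation has been set up.
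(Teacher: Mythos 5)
The paper provides no proof of Theorem~\ref{main_result_of_paper1}: it is stated verbatim as the main result of the companion paper \cite{Steimle(2011)} and imported by citation, so your concluding step of invoking that paper is exactly what the present paper does. Your preliminary homotopy-fiber sketch is a plausible account of the mechanism but is necessarily speculative here; the one imprecision worth flagging is that $E_0$ need not be closed --- the stable fibering problem is set in the category of compact manifolds with boundary, so the parametrized $h$-cobordism input must be taken in the with-boundary form.
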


Moreover, if $M$ is a compact topological manifold, the map
\[\pi_0(\tau)\colon \calS_\infty(M)\to\pi_1\Wh(M)\cong\Wh(\pi_1 M)\]
agrees with the classical Whitehead torsion, sending the class of a homotopy equivalence $f\colon N\to M$ to its Whitehead torsion $\tau(f)$.

\begin{proof}[Proof of Theorem \ref{main_result_1}]
Assumption (i) is clearly necessary while assumption (ii) is necessary by Theorem \ref{fqm:RRwc}.

Now suppose that assumptions (i) and (ii) hold, such that we can factor $f=p'\circ \lambda'$ where $p'$ is a fiber bundle of compact topological manifolds and $\lambda'$ a homotopy equivalence. Denote by $F'_b$ the fiber of $p'$ over $b$ and consider the following commutative diagram
\begin{equation}\label{fqm:comparison_diagram}
\xymatrix{
{\pi}_0\calS_\infty(p') \ar[d]^{\pi_0(\alpha')} \ar[r]^(.4)\tau & H^0(B;\Wh(F'_b)) \ar[d]^{\pi_0(\beta')}  \\
{\pi}_0\calS_\infty(E') \ar[r]^(.4)\tau & {\Whp{E'}} 
}
\end{equation}
which is $\pi_0$ of the pull-back square from Theorem \ref{main_result_of_paper1}, applied to the bundle $p'$. 

By Lemma \ref{fqm:stable_question_formulated_with_structure_spaces}, $f$ is homotopic to a bundle of compact manifolds if and only if the element defined by $\lambda'$ in the lower left-hand corner comes from an element in the upper left-hand corner. Using the pull-back property, this is equivalent to saying that the corresponding element $\tau([\lambda'])$ in the lower right-hand corner comes from an element in the upper right-hand corner. Thus, if we define $o(f)$ as the class of $\tau([\lambda'])$ in the cokernel of $\beta'$, $f$ fibers stably if and only if $o(f)=0$. As the fibrations $p$ and $p'$ are fiber homotopy equivalent, the cokernels of $\pi_0(\beta)$ and $\pi_0(\beta')$ are isomorphic. So we may think of $o(f)$ as an element in the cokernel of $\pi_0(\beta)$.

Finally we have to show that $o(f)$ is well-defined. Indeed suppose that we choose another factorization $f=\bar p'\circ\bar\lambda'$ with $\bar p'\colon \bar E'\to B$ a fiber bundle of compact manifolds. Then by the composition rule the resulting torsion changes by the torsion of $\lambda'\circ \bar\lambda'^{-1}\colon \bar E'\to  E'$, which is in the image of $\alpha'$ since it comes from a fiber homotopy equivalence. Thus, when passing to the cokernel of $\pi_0(\beta)$, the element $o(f)$ is not affected.
\end{proof}

\begin{proof}[Proof of Theorem \ref{main_result_2}]
We saw in Lemma \ref{fqm:stable_question_formulated_with_structure_spaces} that the set $C/{\sim}$ is in bijection with $\pi_0(\alpha')^{-1}([\lambda])$, which by square \eqref{fqm:comparison_diagram} is in bijection to $\pi_0(\beta')^{-1}[\tau(\lambda)]$ and thus to the kernel of $\pi_0(\beta')$ as $\beta'$ is an infinite loop map. Now use that the kernels of $\pi_0(\beta)$ to $\pi_0(\beta')$ are isomorphic.
\end{proof}

\section{Change of base and total space}

The two problems of ``change of base'' and ``change of total space'' are interesting special cases where the parametrized Wall obstruction does not play a role. We first consider them in the light of the general theory. After that we offer a second, more geometric perspective using families of $h$-cobordisms. This second perspective makes it easier to find an estimate for a stable range.

\begin{thm}[Change of total space]\label{fqm:change_of_total_space}
Let $p\colon M\to B$ be a fiber bundle of compact topological manifolds over a compact topological manifold, and let $N$ be another compact topological manifold, equipped with a homotopy equivalence $f\colon N\to M$:
\[\xymatrix{
N\ar[rr]^f_\simeq \ar[rrd]_{pf} && M \ar[d]^p\\
&& B
}\]
Then $pf$ stably fibers if and only if the Whitehead torsion $\tau(f)$ lies in the image of 
\[\pi_0(\beta)\colon H^0(B;\Omega\Wh(F_b))\to \Whp{M}\]
for $p$.
\end{thm}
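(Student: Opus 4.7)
The plan is to apply the existence theorem, Theorem~\ref{main_result_1}, directly to the composite $pf\colon N\to B$, using the tautological factorization $pf = p\circ f$. Here $f$ is a homotopy equivalence by assumption, and $p$ is a fiber bundle and hence in particular a fibration, so this is a legitimate factorization of the kind considered in Section~\ref{section:definition}.

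First I would verify the two hypotheses of Theorem~\ref{main_result_1} for this factorization. The fibers of $p$ are compact topological manifolds, hence have the homotopy type of finite CW complexes and in particular are finitely dominated. For the parametrized Wall obstruction, Theorem~\ref{fqm:RRwc} applied to $p\colon M\to B$ with the trivial factorization $p = p\circ\id$ (in which $p$ is already a bundle of compact manifolds) yields $\Wall(p) = 0$.

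Next, by the very definition of the fibering obstruction, $o(pf)$ is the class in the cokernel of
\[\pi_0(\beta)\colon H^0(B;\Omega\Wh(F_b))\to\Whp{M}\]
of the Whitehead torsion of the homotopy equivalence appearing in the chosen factorization, which in our case is $\tau(f)$. The well-definedness of $o$ established during the proof of Theorem~\ref{main_result_1} ensures that this identification does not depend on any particular choice of factorization. Hence $o(pf) = 0$ if and only if $\tau(f)$ lies in the image of $\pi_0(\beta)$, and Theorem~\ref{main_result_1} then delivers exactly the desired equivalence.

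Since the statement is essentially a direct specialization of Theorem~\ref{main_result_1} to the case where the given map already admits, by hypothesis, a factorization through a bundle of compact manifolds, I do not expect any real technical obstacle. The only point requiring care is the appeal to the well-definedness of $o(pf)$ when identifying it with the class $[\tau(f)]$; beyond this, the argument is a bookkeeping exercise combining the definitions in Section~\ref{section:definition} with the two cited theorems.
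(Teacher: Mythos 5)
Your proposal is correct and follows essentially the same route as the paper: the author likewise observes that $pf = p\circ f$ is already a factorization into a homotopy equivalence followed by a bundle of compact manifolds, so conditions (i) and (ii) of Theorem~\ref{main_result_1} hold automatically and $o(pf)$ is the class of $\tau(f)$ in the cokernel of $\pi_0(\beta)$. Your extra care about well-definedness is fine but not needed beyond what the proof of Theorem~\ref{main_result_1} already supplies.
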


\begin{thm}[Change of base]\label{fqm:change_of_base}
Let $p\colon M\to B$ be a fiber bundle of compact topological manifolds over a compact topological manifold, and let $C$ be another compact topological manifold, equipped with a homotopy equivalence $f\colon B\to C$:
\[\xymatrix{
M \ar[d]_p \ar[rrd]^{fp} \\
B \ar[rr]^f_\simeq && C
}\]
Then $fp$ stably fibers if and only if the image of the Whitehead torsion $f_*^{-1}\tau(f)\in\Whp{B}$ under the transfer homomorphism 
\[p^*\colon \Whp{B}\to\Whp{M} \]
lies in the image of $\pi_0(\beta)$. 

In particular, if the fiber $F$ of $p$ is connected, $\pi_1(B)$ acts trivially on $F$, and $fp$ stably fibers, then 
\[\chi_e(F)\cdot \tau(f)=0\in\Whp{C}.\]
\end{thm}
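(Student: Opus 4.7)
The plan is to apply Theorem \ref{main_result_1} to the composite $fp\colon M\to C$. Concretely, this amounts to choosing a convenient factorization $fp=q\circ\lambda$ into a homotopy equivalence followed by a fibration, verifying $\Wall(q)=0$, and identifying the resulting Whitehead torsion $\tau(\lambda)$.

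To set up the factorization, I would choose a homotopy inverse $\bar f\colon C\to B$ of $f$ and form the pullback $q\colon E:=\bar f^*M\to C$. As the pullback of a fiber bundle of compact topological manifolds along a continuous map, $q$ is again such a bundle, so by Theorem \ref{fqm:RRwc} we have $\Wall(q)=0$. The projection $\pi\colon E\to M$ is a homotopy equivalence (as the pullback of the homotopy equivalence $\bar f$ along the fibration $p$), and any homotopy inverse $\lambda\colon M\to E$ of $\pi$ yields $fp\simeq q\circ\lambda$.

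To identify $\tau(\lambda)\in\Whp{E}$, I would invoke the naturality of Whitehead torsion under the algebraic $K$-theory transfer, namely $\tau(\pi)=p^*\tau(\bar f)$, where $p^*\colon\Whp{B}\to\Whp{M}$ is the transfer associated to $p$. Combined with the standard identities $\tau(\bar f)=-f_*^{-1}\tau(f)$ and $\tau(\lambda)=-\pi_*^{-1}\tau(\pi)$, the canonical isomorphism $\pi_*\colon\Whp{E}\xrightarrow{\cong}\Whp{M}$ sends $\tau(\lambda)$ to $p^*f_*^{-1}\tau(f)$. Since $p$ and $q$ are fiber homotopy equivalent (via $\bar f$), the corresponding maps $\pi_0(\beta)$ for the two fibrations are identified under $\pi_*$, and Theorem \ref{main_result_1} applied to $fp=q\circ\lambda$ yields the ``if and only if'' assertion.

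For the final ``in particular'' statement, assume the extra hypotheses and that $fp$ stably fibers, so that $p^*f_*^{-1}\tau(f)=\pi_0(\beta)(x)$ for some $x$. I would push both sides through $p_*\colon\Whp{M}\to\Whp{B}$. On the right, every element in the image of $\pi_0(\beta)$ has the form $\chi(B)\cdot i_*y$ with $i\colon F_b\hookrightarrow M$ the fiber inclusion, so $p_*$ annihilates it because $p\circ i$ is constant. On the left, the classical transfer identity $p_*\circ p^*=\chi(F)\cdot\id$ on $\Whp{B}$ (valid since $F$ is connected and $\pi_1 B$ acts trivially on $F$) gives $\chi(F)\cdot f_*^{-1}\tau(f)=0$; applying the isomorphism $f_*$ yields $\chi_e(F)\cdot\tau(f)=0$ in $\Whp{C}$. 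The main obstacle in carrying out the plan is precisely this pair of properties of the $K$-theoretic transfer for fiber bundles with trivial monodromy, namely the naturality $\tau(p^*g)=p^*\tau(g)$ of torsion under transfer and the Gottlieb-type identity $p_*\circ p^*=\chi(F)\cdot\id$; both are standard, but verifying the precise compatibility with signs and with the identification $\pi_0(\beta_p)\cong\pi_0(\beta_q)$ requires some care.
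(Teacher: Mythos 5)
Your proof is correct and follows essentially the same route as the paper: factor $fp$ through the pullback of $p$ along a homotopy inverse of $f$, identify the torsion of the resulting equivalence of total spaces with $p^*f_*^{-1}\tau(f)$ via Anderson's geometric description of the transfer, and for the final claim combine $p_*\circ p^*=\chi(F)\cdot\id$ with the vanishing of $p_*\circ i_*$. The only cosmetic difference is that the paper works directly with the map $M\to k^*M$ induced by $f$ rather than with a homotopy inverse of the projection $k^*M\to M$, which avoids some of your sign bookkeeping.
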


\begin{proof}[Proof of Theorem \ref{fqm:change_of_total_space}]
Notice that $pf$ is already a factorization into a homotopy equivalence followed by a fiber bundle. So, conditions (i) and (ii) of Theorem \ref{main_result_1} are satisfied, and the torsion obstruction $o(f\circ g)$ is just the image of the Whitehead torsion of $f$ in the cokernel.
\end{proof}

\begin{proof}[Proof of Theorem \ref{fqm:change_of_base}]
Denote by $k\colon C\to B$ a homotopy inverse of $f$, and consider the pull-back 
\[\xymatrix{
k^*M \ar[rr] \ar[d]^{k^*p} && M \ar[d]^{p}\\
C \ar[rr]^k && B
}\]
Now $f$ induces a map $\bar f\colon M\to k^*M$ such that $f\circ p=k^* p\circ\bar f$ is a factorization of $f\circ p$ into a homotopy equivalence followed by a fiber bundle. Thus $o(f\circ p)$ is given by the class of $\bar f^{-1}_* \tau(\bar f)$, which satisfies
\[\bar f^{-1}_* \tau(\bar f)= p^* f^{-1}_*\tau(f)\]
by the geometric definition of the transfer map \cite{Anderson(1974)}. 

Now suppose that $F$ is connected and $\pi_1(B)$ acts trivially. In this case the composite $p_*\circ p^*$ is just multiplication with the Euler characteristic of $F$ \cite{Lueck(1987)}. We saw that if $f\circ p$ stably fibers, then $p^* f_*^{-1}\tau(f)$ comes from some element $\kappa\in\Whp{F}$ under the map induced by the inclusion $i\colon F\to M$. As the composite $p\circ i$ is nullhomotopic, we have
\[0=p_* i_* \kappa = p_* p^* f_*^{-1} \tau(f)=\chi_e(F) \cdot f_*^{-1}\tau(f)\in\Whp{B}.\qedhere\]
\end{proof}

Using the relation between the parametrized torsion and higher $h$-cobor\-dism theory, we now give a second perspective on the change of total space problem. Under smoothability conditions, this approach allows to estimate a stable range using the stability results of Igusa \cite{Igusa(1988)}.

In the change of total space problem as in Theorem \ref{fqm:change_of_total_space}, suppose for simplicity that $B$ is connected. Denote by $k$ the smallest dimension of a CW complex homotopy equivalent to $B$, and by $n$ the the smallest dimension of a CW complex homotopy equivalent to the fibers.

\begin{thm}\label{fqm:change_of_total_space_revisited}
In the situation of Theorem \ref{fqm:change_of_total_space}, suppose that $M$, $N$, and the fibers of $f$ are smoothable. If $\tau(f)$ is in the image of $\pi_0(\beta)$, then the composite
\[\bar N:=N\times I^l\xrightarrow{\Proj} N\xrightarrow{f} M\xrightarrow{p} B\]
fibers as soon as
\begin{multline*}
\dim \bar N\geq\max\{2(n+k)+1, \dim M+n+k, \dim M+k+2,\\
 \dim B+2k+6, \dim B+3k+2, \dim N+3\}.
\end{multline*}
\end{thm}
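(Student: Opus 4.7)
The plan is to upgrade the $\pi_0$-level pullback square from Theorem \ref{main_result_of_paper1}, applied to the bundle $p$, from its stable form to an unstable version by interpreting both horizontal arrows geometrically. On the right, points of $\sections{\Omega\Wh_B(M)}{B}$ correspond, inside Igusa's stable range, to bundles of smooth $h$-cobordisms on $p$; on the left, points of $\calS_n(p)$ correspond to actual manifold bundles of fiber dimension $n$. The smoothability hypothesis on $M$, $N$ and the fibers is imposed precisely so that Igusa's stability theorem \cite{Igusa(1988)} for smooth pseudoisotopy spaces is available.

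Concretely, fix a class $x\in H^0(B;\Omega\Wh(F))$ with $\pi_0(\beta)(x)=-\tau(f)$. After crossing $p$ with $I^{l_1}$ for some $l_1$ placing the fiber dimension $n+l_1$ inside Igusa's stable range for base dimension $k=\dim B$, the class $x$ lifts to a smooth bundle of $h$-cobordisms $W\to B$ on $M\times I^{l_1}\to B$. Cutting off $W$ at its top yields a fiber bundle $p'\colon M'\to B$ of compact topological manifolds together with a fiber homotopy equivalence $e\colon M\times I^{l_1}\to M'$ whose Whitehead torsion, computed in $\Whp M$ under the canonical identification, equals $\beta(x)=-\tau(f)$. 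Hence the composite
\[h := e\circ(f\times\id_{I^{l_1}})\colon N\times I^{l_1}\longrightarrow M'\]
is a homotopy equivalence of compact topological manifolds with vanishing Whitehead torsion.

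The final step is geometric: by the topological $s$-cobordism theorem in its stabilized form, provided $\dim(N\times I^{l_1})$ exceeds the general-position and embedding bounds required to upgrade a simple homotopy equivalence to a homeomorphism after stabilization, there exists $l_2$ and a homeomorphism $N\times I^{l_1+l_2}\cong M'\times I^{l_2}$ covering a map homotopic to $h$. Composing with the projection $M'\times I^{l_2}\to B$ produces a fiber bundle structure on $\bar N=N\times I^{l_1+l_2}$ which is homotopic, as a map to $B$, to $p\circ f\circ\Proj$.

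The main obstacle is to keep precise track of the stabilization exponents. The terms $\dim B+2k+6$ and $\dim B+3k+2$ will come from Igusa's connectivity estimate, which requires the fiber dimension $n+l_1$ to grow linearly in $k$ in order to realize $x$ as a smooth bundle $h$-cobordism. The bound $\dim N+3$ will come from the topological $s$-cobordism theorem used to upgrade the simple equivalence $h$ to a homeomorphism after further stabilization. Finally, the terms $2(n+k)+1$, $\dim M+n+k$ and $\dim M+k+2$ will arise from the general-position arguments (Whitney embeddings, concordance-implies-isotopy in a range, and the fact that a simple homotopy equivalence of sufficiently high-dimensional manifolds becomes a homeomorphism after crossing with an interval) needed to perform these constructions inside manifold bundles of the appropriate dimensions. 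Combining all of these constraints produces the stated maximum.
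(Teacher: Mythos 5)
The overall architecture of your plan is the same as the paper's up to the last step: disassemble $\tau(f)$ to a class in $H^0(B;\Omega\Wh(F))$, realize this class as a parametrized smooth $h$-cobordism using Igusa's stability theorem (this correctly accounts for the terms $\dim B+2k+6$ and $\dim B+3k+2$), glue the $h$-cobordism onto the bundle to obtain a new bundle $p'\colon M'\to B$ together with a homotopy equivalence $h\colon N\times I^{l_1}\to M'$ with $\tau(h)=0$, and then try to upgrade $h$ to a homeomorphism after further stabilization. The final step, however, contains a genuine gap. There is no ``stabilized $s$-cobordism theorem'' that turns a simple homotopy equivalence between compact manifolds into a homeomorphism after crossing with disks. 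The $s$-cobordism theorem recognizes a trivial $h$-cobordism; it does not manufacture an $h$-cobordism out of a simple homotopy equivalence. Concretely, a fake $\CC P^{2n+1}$ and the genuine $\CC P^{2n+1}$ are simple homotopy equivalent (both are simply connected), but $Z\times D^a$ and $\CC P^{2n+1}\times D^a$ are not homeomorphic for any $a$, because their rational Pontryagin classes differ. So, as stated, your last paragraph asserts a fact that is false, and the argument does not close.

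What the paper does — and what your plan is missing — is a preliminary reduction making $f$ stably \emph{tangential}: replace $M$ by the disk bundle $q\colon\bar M\to M$ of an $(n+k)$-dimensional reduction of $(f^{-1})^*TN - TM$, and $f$ by $\bar f\colon N\to\bar M$ given by $f$ followed by the zero section, so that $\bar f^*T\bar M\cong TN$ stably. After the $h$-cobordism gluing, $\bar{\bar f}\colon N\to\bar{\bar M}$ is a simple homotopy equivalence that is still stably tangential. Then $\bar N=N\times I^l$ and (a further stabilization of) $\bar{\bar M}$ are both thickenings of an $(n+k)$-complex $K$ with the same tangential data, and Wall's classification of thickenings in the stable range ($\dim>2(n+k)$ and $\pi$-$\pi$ — whence the constraints $2(n+k)+1$ and $\dim N+3$) shows they agree. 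It is this thickening argument, not the $s$-cobordism theorem, that converts a simple tangential homotopy equivalence into a homeomorphism after stabilization; and the disk-bundle replacement is exactly what produces the $\dim M + n + k$ term in the bound — a term your plan attributes, incorrectly, to general-position arguments. Without the tangential reduction, the proof cannot be completed along these lines.
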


\begin{proof}
The first step is to replace $f\colon N\to M$ by $\bar f\colon N\to\bar M$ which is a stably tangential homotopy equivalence, i.e.~$\bar f^*T\bar M\cong TN$ stably. 

Therefore recall that $M$ and $N$ are supposed to be smoothable, so we may choose a vector bundle reduction
\[(f^{-1})^*TN - TM\colon M\to BO\]
of the topological tangent bundle. It actually has a further reduction to a $O(n+k)$-bundle, the inclusion $BO(N)\to BO$ being $N$-connected. Let therefore $q\colon \bar M\to M$ be a disk bundle of this $(n+k)$-dimensional vector bundle. We obtain
\[T\bar M\vert_M\cong TM\oplus T_{\fib}q\vert_M\cong TM\oplus q\cong  (f^{-1})^*TN\]
stably, so if we let $\bar f\colon N\to \bar M$ be $f$ followed by the zero-section, then $\bar f$ is a stable tangential homotopy equivalence.

Notice that the map $\bar p=p\circ q$ still is a fiber bundle of compact smoothable manifolds (with a fiber we denote by $\bar F_b$), and that
\[\dim \bar M = \dim M+(n+k).\]

Now suppose that $\tau(\bar f)=\tau(f)$ disassembles, i.e.~that there is an element $\tau\in H^0(B;\Omega\Wh(\bar F_b))$ such that that $\pi_0(\alpha)(\tau)=\tau(f)$. By \cite[Corollary 9.3]{Steimle(2011)}, the element $\tau$ is the parametrized torsion obtained from glueing a fiberwise $h$-cobordism $E$ along the vertical boundary bundle $\partial \bar p$, provided that both 
\[K+1\geq k,\quad l-1\geq k\]
where $K$ is the concordance stable range of $\partial \bar F_b$ and $l$ is the connectivity of the pair $(\bar F_b, \partial \bar F_b)$. The following lemma (whose proof is an exercise using the Blakers-Massey theorem) shows that $l\geq n+k-1$. 

\begin{lem}\label{fqm:increasing_connectivity_by_stabilization}
If $\bar F\to F$ is an $L$-disk bundle over a compact manifold, and the pair $(F,\partial F)$ is $N$-connected, then $(\bar F, \partial \bar F)$ is $(N+L)$-connected.
\end{lem}

By our assumptions the manifold $\partial \bar F_b$ is smoothable, so Igusa's stability result \cite{Igusa(1988)} (see \cite[Theorem 1.3.4]{Weiss-Williams(2001)} for the topological range) says that $k-1\leq K$ whenever
\[\dim \partial \bar F_b\geq\max\{(2(k-1)+7, 3(k-1)+4\}.\]
Thus, stabilizing further if necessary, we obtain a parametrized $h$-cobordism $E$ over $\partial \bar p$ such that the torsion of the projection
\[\bar{\bar M}:=\bar M \cup_{\partial\bar p} E\to \bar M\]
is precisely $\tau$. Notice that
\[\dim \bar{\bar M} = \max\{\dim M+n+k, \dim M+k+2, \dim B+2k+6, \dim B+3k+2\}.\]

By the composition rule, the composite
\[\bar{\bar f}\colon N\to \bar M\to \bar {\bar M}\]
has torsion zero (and is still stably tangential). Now stabilize $N$ to obtain $\bar N:=N\times I^l$ such that $l\geq 3$ (so that $\bar N$ is $\pi$-$\pi$) and $\dim \bar N\geq 2(n+k)+1$, and stabilize either $\bar N$ or $\bar{\bar M}$ further so that the dimensions agree. Letting $K$ be a finite $(n+k)$-dimensional CW complex simple homotopy equivalent to $\bar N$, it follows that both $\bar N$ and $\bar{\bar M}$ define thickenings of $K$ in the sense of Wall \cite{Wall_IV}. It is known that stably, thickenings are classified by their tangent bundle. Now the dimension of the thickenings we consider exceeds $2(n+k)$, so we are in the stable range. But $\bar{\bar g}$ is stably tangential, hence the thickenings agree. Thus $\bar{\bar g}$ is homotopic to a homeomorphism.

Summarizing all the necessary stabilizations, we see that $pf$ fibers as soon as 
\begin{multline*}
\dim \bar N\geq\max\{2(n+k)+1, \dim N+3, \dim M+n+k,\\ 
\dim M+k+2, \dim B+2k+6, \dim B+3k+2\}.\qedhere
\end{multline*}
\end{proof}

\section{Examples I: Elementary applications}

In this section we give some immediate applications of our results on the stable fibering problem. The first one characterizes simple homotopy equivalences between compact manifolds as the homotopy equivalences that stably fiber. After that we give some implications for closed manifolds.

\begin{prop}\label{fqm:characterization_of_she}
A homotopy equivalence $f\colon M\to N$ between compact manifolds stably fibers if and only if $\tau(f)=0$. 

If $M$ and $N$ are closed smoothable of dimension $k$ and $\tau(f)=0$, then $f$ fibers after at most $\max\{2k+6, 3k+2\}$ stabilizations.
\end{prop}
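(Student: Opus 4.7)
The plan is to deduce both halves of the proposition by specializing the change-of-total-space theorems (Theorems~\ref{fqm:change_of_total_space} and~\ref{fqm:change_of_total_space_revisited}) to the trivial fiber bundle $p=\id_N\colon N\to N$ paired with the given homotopy equivalence $f\colon M\to N$. Under this choice the composite $pf$ is $f$ itself, and each fiber $F_b$ of $p$ is a single point, which causes the parametrized obstruction machinery to collapse usefully.

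For the ``if and only if'' part, I would apply Theorem~\ref{fqm:change_of_total_space}: $f$ stably fibers precisely when $\tau(f)$ lies in the image of
\[\pi_0(\beta)\colon H^0(N;\Omega\Wh(F_b))\to\Whp{N}.\]
The key observation is that $\beta$ factors by construction through $\Omega\Wh(F_b)$, and since $F_b$ is a point, $\pi_0$ of that factor equals $\pi_1\Wh(\mathrm{pt})=\Wh(1)=0$. Hence the image of $\pi_0(\beta)$ is trivial and the criterion reduces to $\tau(f)=0$, as claimed.

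For the quantitative part I would plug $n=0$ (point fibers) and $\dim M=\dim N=\dim B=k$ into the estimate of Theorem~\ref{fqm:change_of_total_space_revisited}. The six entries inside the $\max$ collapse to $2k+1,\ 2k,\ 2k+2,\ 3k+6,\ 4k+2,\ k+3$, whose maximum is $\max\{4k+2,3k+6\}$. Writing $\dim\bar N=k+l$ for the number $l$ of $I$-stabilizations applied to $M$, this becomes $l\geq\max\{3k+2,2k+6\}$, matching the stated bound. The smoothability hypotheses of Theorem~\ref{fqm:change_of_total_space_revisited} are in force, since $M$ and $N$ are smoothable by assumption and the fibers of $p$ are trivially so; similarly the minimal CW dimensions of $B=N$ and of the point fibers are $k$ and $0$ respectively.

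I do not foresee any substantive obstacle: the proposition is essentially the degenerate point-fiber case of the general change-of-total-space theory, and the entire content reduces to identifying the vanishing of $\pi_0(\beta)$ and inserting the correct numerical data into the bound of Theorem~\ref{fqm:change_of_total_space_revisited}.
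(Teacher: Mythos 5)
Your proposal is correct and follows essentially the same route as the paper: both specialize the change-of-total-space problem to the trivial bundle $p=\id_N$, observe that $\Wh(\ast)$ being contractible forces $\pi_0(\beta)=0$ so the obstruction is the full Whitehead torsion, and (for the quantitative part) substitute $n=0$ and $\dim M=\dim N=\dim B=k$ into the bound of Theorem~\ref{fqm:change_of_total_space_revisited}. The paper leaves that arithmetic implicit; your explicit computation reproduces the stated bound correctly.
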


\begin{proof}[Proof of Proposition \ref{fqm:characterization_of_she}]
This is a simple application of our results in the following change of total space problem:
\[\xymatrix{
M \ar[rr]^f_\simeq \ar[rrd]_f && N \ar[d]^{\id}\\
&& N
}\]
As the fibers of the identity are contractible, their Whitehead group vanishes. So $\pi_0(\beta)$ is the zero map and its cokernel is just $\Whp{M}$. Hence $o(f)=f_*^{-1}\tau(f)\in\Whp{M}$.
\end{proof}

Now we turn to closed manifolds and consider the change of total space problem
\[\xymatrix{
M \ar[rr]^f_\simeq \ar[rrd]_g && N\ar[d]^p\\
&& B
}\]
If $g$ stably fibers, i.e.~for some $n\gg 0$, $M\times D^{n+1}\to B$ fibers, then we may restrict to the boundary to see that
\[\bar g\colon M\times S^n\to M\xrightarrow{g} B\]
fibers for large enough $n$. So our theory gives sufficient conditions for $M\times S^n$ to fiber over $B$.

On the other hand, if $\bar g$ fibers, then it certainly stably fibers. It follows:

\begin{prop}
\begin{enumerate}
\item A necessary condition for $M\times S^{2N}\to B$ to fiber for large $N$ is that
\[2 o(g)=0.\] 
\item A sufficient condition for $M\times S^{2N}\to B$ to fiber for large $N$ is that
\[o(g)=0.\]
\item The sufficient condition is not necessary in general.
\end{enumerate}
\end{prop}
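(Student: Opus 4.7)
The plan is to treat each part in turn, with part (i) being the most substantial.

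For (ii), by Theorem \ref{main_result_1} the assumption $o(g)=0$ means that $g$ stably fibers, so $M\times D^{n+1}\to B$ is homotopic to a fiber bundle of compact manifolds for some $n$. Crossing such a bundle with $I$ again yields a bundle (since $D^{n+1}\times I\cong D^{n+2}$), allowing stabilization to make $n=2N$ even; restriction of the resulting bundle to the vertical boundary then exhibits $M\times S^{2N}=\partial(M\times D^{2N+1})\to B$ as a fiber bundle.

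For (i), assume $\bar g\colon M\times S^{2N}\to B$ fibers. I would use the factorization
\[\bar g\colon M\times S^{2N}\xrightarrow{f\times\id_{S^{2N}}} N\times S^{2N}\xrightarrow{p\circ\Proj} B\]
of $\bar g$ as a homotopy equivalence followed by a fiber bundle (with fibers $F_b\times S^{2N}$). Since $\bar g$ fibers (hence stably fibers), Theorem \ref{fqm:change_of_total_space} applied to this change-of-total-space problem shows that $\tau(f\times\id_{S^{2N}})$ lies in the image of
\[\pi_0(\beta')\colon H^0(B;\Omega\Wh(F_b\times S^{2N}))\to\Whp{M}.\]
The product formula for Whitehead torsion identifies $\tau(f\times\id_{S^{2N}})=\chi(S^{2N})\cdot\tau(f)=2\tau(f)$. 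The crucial point is that $\im\pi_0(\beta')=\im\pi_0(\beta)$: the fiber-wise projection $F_b\times S^{2N}\to F_b$ fits into a commutative square
\[\xymatrix{
\sections{\Omega\Wh_B(E\times S^{2N})}{B} \ar[d] \ar[rr]^-{\beta'} && \Omega\Wh(E\times S^{2N}) \ar[d]  \\
\sections{\Omega\Wh_B(E)}{B} \ar[rr]^-{\beta} && \Omega\Wh(E)
}\]
whose right vertical arrow is an isomorphism on $\pi_0$ (as $S^{2N}$ is simply connected for $N\geq 1$) and whose left vertical arrow is split surjective on $\pi_0$ (being a retraction of the inclusion induced by $F_b\hookrightarrow F_b\times S^{2N}$). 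A direct diagram chase gives the equality of images, so $2\tau(f)\in\im\pi_0(\beta)$, i.e.~$2o(g)=0$. This identification of images is the main obstacle in the whole argument.

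For (iii), I would construct an explicit counterexample using the trivial bundle $p=\id_B$, for which the fiber is a point, $\pi_0(\beta)=0$, and $o(g)=\tau(f)\in\Whp{B}$. Pick a closed manifold $B$ of dimension $\geq 5$ whose Whitehead group contains a nonzero element $\tau$ of order two, let $W$ be an $h$-cobordism over $B$ realizing $\tau$, and take $M$ to be its other end with the induced homotopy equivalence $f\colon M\to B$; then $\tau(f)\neq 0$. The product $h$-cobordism $W\times S^{2N}$ has torsion $\chi(S^{2N})\cdot\tau=2\tau=0$, so it is an $s$-cobordism, and for $\dim B+2N\geq 6$ the $s$-cobordism theorem gives $W\times S^{2N}\cong (B\times S^{2N})\times I$. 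Restricting to the ends yields a homeomorphism $M\times S^{2N}\cong B\times S^{2N}$ homotopic to $f\times\id_{S^{2N}}$; composing with $\Proj\colon B\times S^{2N}\to B$ therefore exhibits $\bar g$ as homotopic to a fiber bundle, while $o(g)=\tau(f)\neq 0$.
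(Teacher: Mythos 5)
Your proofs of (i) and (ii) follow the same route as the paper's: (ii) is exactly the paper's ``apply the change of total space theorem and restrict to the boundary,'' and (i) uses the same product formula $\tau(f\times\id_{S^{2N}})=\chi(S^{2N})\tau(f)=2\tau(f)$; the paper is terse and leaves implicit the identification of $\coker\pi_0(\beta')$ with $\coker\pi_0(\beta)$, which you carry out explicitly via the commutative square and the split--surjective/iso observation. That is a correct and useful spelling out rather than a divergence. Where you genuinely differ is in (iii). The paper takes $B=S^1$, $p\colon K\times S^1\to S^1$ the projection, and an $f\colon M\to K\times S^1$ with $\tau(f)$ of order two; to see that $pfq$ fibers, the paper then invokes Farrell's fibering theorem over the circle together with the forward reference to Theorem~\ref{fqm:comparison_to_FLS_obstructions} identifying $o$ with $\tau_{\fib}$ when $\chi(B)=0$. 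You instead take the degenerate bundle $p=\id_B$ (so $o(g)=\tau(f)\in\Whp{B}$, the full Whitehead group), realize a $2$-torsion element by an $h$-cobordism $W$, and use that $W\times S^{2N}$ is an $s$-cobordism to produce the fibering directly. Your argument is more self-contained (no appeal to Farrell's theorem or to later comparison results), at the price of making the example as degenerate as possible; the paper's version is perhaps more illustrative of a genuine bundle situation. Both constructions tacitly assume the existence of a closed manifold whose Whitehead group has $2$-torsion, which you assert without citation but which the paper also needs, so this is not a gap specific to your argument.
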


\begin{proof}
(i) We have $\tau(f\times\id_{S^{2N}})=\chi(S^{2N})\cdot \tau(f)= 2\tau(f)$, and its class in the cokernel of $\pi_0(\alpha)$ defines the obstruction $o(\bar g)$ for $\bar g$ to stably fiber.

(ii) Apply the results on the change of total space problem and restrict to the boundary.

(iii) Choose a homotopy equivalence $f\colon M\to K\times S^1$ between closed manifolds such that $\tau(f)\neq 0$ and $2\tau(f)=0$, and let $p\colon K\times S^1\to S^1$ denote the projection. Hence $o(pf)\neq 0$; in contrast, if $q\colon M\times S^{2N}\to M$ is the projection, then $o(pfq)=0$. 

We will show in Theorem \ref{fqm:comparison_to_FLS_obstructions} that for $B=S^1$, the stable fibering obstruction $o(pfq)$ and the obstructions $\tau_{\fib}(pfq)$ defined in \cite{Farrell-Lueck-Steimle(2009)} agree. Now Farrell's fibering theorem together with the comparison of the different obstructions \cite[Theorem 8.1]{Farrell-Lueck-Steimle(2009)} shows that $pfq$ fibers.
\end{proof}

\begin{prop}
If $f\colon M\to B$ is any map between closed manifolds whose homotopy fiber is finitely dominated, then the composite
\[M\times S^1\times S^1\times S^N\xrightarrow{Proj} M\xrightarrow{f} B\]
fibers for large enough $N$.
\end{prop}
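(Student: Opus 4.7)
The plan is to apply Theorem~\ref{main_result_1} to the composite $g := f\circ\Proj\colon M\times T\to B$ with $T:=S^1\times S^1$, conclude that $g$ stably fibers, and then extract the desired non-stable fibering of $M\times T\times S^N$ for $N$ large by restricting to the vertical boundary of a stabilizing disk.

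First I would factor $f=p\circ\lambda$ with $p\colon E\to B$ a fibration of finitely dominated fiber $F$ and $\lambda$ a homotopy equivalence, so that $g$ factors through the fibration $p_1:=p\circ\mathrm{pr}_E\colon E\times T\to B$ (of fiber $F\times T$) via the homotopy equivalence $\lambda_1:=\lambda\times\id_T$. By Mather's trick $F\times S^1$, and hence $F\times T$, is homotopy equivalent to a finite CW complex, so condition (i) of Theorem~\ref{main_result_1} is clear. For the parametrized Wall obstruction, I expect the cleanest argument to use a multiplicativity of the parametrized $A$-theory characteristic under composition with a trivial bundle, giving $\Wall(p_1)=\chi(T)\cdot\Wall(p)=0$ since $\chi(T)=\chi(S^1)^2=0$. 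The first two levels of this vanishing can also be read directly off Theorem~\ref{fqm:spectral_sequence}: the edge (finiteness) obstruction vanishes by Mather, and along any loop $\gamma$ the secondary obstruction equals $[\tau(t_\gamma\times\id_T)]=[\chi(T)\tau(t_\gamma)]=0$. Granted $\Wall(p_1)=0$, the fibering obstruction $o(g)$ is represented by $\tau(\lambda_1)=\chi(T)\cdot\tau(\lambda)=0$.

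Then, by Theorem~\ref{main_result_1}, $g$ stably fibers, so there exists $n\geq 1$ such that $M\times T\times D^n\to B$ is homotopic to the projection of a bundle of compact manifolds. Taking products with further disks extends this to fiber bundles on $M\times T\times D^{n+k}$ for every $k\geq 0$; since $M\times T$ is closed, the vertical boundary equals $\partial(M\times T\times D^{n+k})=M\times T\times S^{n+k-1}$, and restricting the bundle projection together with the homotopy from $f\circ\Proj$ to this boundary yields a fibering of $M\times S^1\times S^1\times S^N\to B$ for every $N\geq n-1$.

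The hardest step will be establishing $\Wall(p_1)=0$ in full generality. The elementary arguments using $\chi(T)=0$ only handle the two lowest terms in the spectral sequence of Theorem~\ref{fqm:spectral_sequence}; a fully rigorous treatment of the higher-degree vanishing demands the multiplicativity of the parametrized $A$-theory characteristic applied to degenerate products (e.g.\ with the map $T\to\mathrm{pt}$), where the $\chi(T)=0$ factor should annihilate everything.
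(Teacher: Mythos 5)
Your overall strategy---reduce to Theorem~\ref{main_result_1} and then restrict to the boundary of a stabilizing disk---matches the paper's proof, and the final boundary-restriction step is handled correctly. However, the central part of your argument has two genuine problems.

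First, the claim $\Wall(p_1)=\chi(T)\cdot\Wall(p)$ is not a meaningful formula as written: $\Wall(p_1)$ lives in $H^0(B;\Wh(F\times T))$ while $\Wall(p)$ lives in $H^0(B;\Wh(F))$, and even after inserting the map induced by $F\hookrightarrow F\times T$, no such ``Euler-characteristic multiplicativity'' of the parametrized $A$-theory characteristic is established in this generality (the familiar $\chi(T)$-formula is a statement about Whitehead torsion, not about the full $A$-theory characteristic). You flag this as the hard step, but it is in fact the step that needs a citation: the paper invokes the parametrized Mather trick of Weiss--Williams (their Corollary~5.2.5), which gives $\Wall=0$ after crossing the fiber with a single $S^1$, without any detour through $\chi(T)$.

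Second, and more seriously, your computation of $o(g)$ uses the wrong factorization. You write $o(g)=[\tau(\lambda_1)]$ where $\lambda_1=\lambda\times\id_T\colon M\times T\to E\times T$; but $E$ is merely the total space of a fibration, not a compact manifold, so $E\times T$ carries no preferred simple structure and $\tau(\lambda_1)$ is simply not defined. The definition of $o$ requires a factorization through a fiber bundle $p'\colon E'\to B$ of compact manifolds, and $\tau$ must be computed for the resulting $\lambda'\colon M\times T\to E'$. The paper's two-step argument handles exactly this: after one $S^1$-factor, Weiss--Williams gives a compact-manifold bundle factorization $M\times S^1\xrightarrow{\lambda'}E'\to B$, so $\tau(\lambda')$ makes sense; crossing $\lambda'$ with the second $S^1$-factor then produces $\lambda'\times\id_{S^1}\colon M\times S^1\times S^1\to E'\times S^1$ with $\tau(\lambda'\times\id_{S^1})=\chi(S^1)\tau(\lambda')=0$. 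Your one-shot $\chi(T)=0$ shortcut conflates this well-founded computation with one on the non-manifold $E\times T$, and this is where the argument breaks.
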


\begin{proof}
The parametrized Wall obstruction becomes zero after taking product with $S^1$ (see e.g.~\cite[Corollary 5.2.5]{Weiss-Williams(2001)}). Hence for the map $M\times S^1\to B$, the fibering obstruction is defined. As it is given by a Whitehead torsion, it becomes zero after taking product with another $S^1$. Therefore $M\times S^1\times S^1\times D^{N+1}\to B$ fibers. Now restrict to the boundary.
\end{proof}

\section[Examples II: Stable vs.~unstable and block fibering and TOP vs.~DIFF]{Examples II: Stable vs.~unstable and block fibering and TOP vs.~DIFF}

In this section we give examples of maps $f\colon M\to B$ that fiber stably but not unstably. Of course, if the dimension of $M$ and $B$ agree then fibering $f$ unstably just means deforming the map to a homeomorphism, whereas $f$ stably fibers if and only if it is a simple homotopy equivalence (Proposition \ref{fqm:characterization_of_she}): This gives obvious examples. 

More interestingly, we will consider two types of situations of arbitrarily high codimension. The first one considers the tangential data; supposing that a recent conjecture of Reis-Weiss on topological rational Pontryagin classes holds, we obtain a lower bound on the number of stabilizations needed. The second one applies surgery theory to produces an example which actually does not even block fiber.

We also consider certain maps to spheres with spherical fibers where unstable fibering and block fibering are equivalent and we expand on an example of Klein-Williams to produce examples that fiber stably in TOP but not in DIFF.

\subsection{Bundle theory}

Let $Z$ be an exotic complex projective space equipped with a homotopy equivalence
\[h\colon Z\to \CC P^{2n+1}\]
such that, for some $k\neq 0$, the $\calL$-genus satisfies
\begin{equation}\label{fqm:condition_on_L_genus}
(h^*)^{-1}\calL(Z) = \calL(\CC P^{2n+1})\cdot (1+8ke^{2n})\in H^*(\CC P^{2n+1})\cong \QQ[e]/(e^{2n+2}). 
\end{equation}
(Take all the cohomology rings with rational coefficients.) We will show below that such objects exist.

\begin{conj}[{\cite{Reis-Weiss(2011)}}]\label{fqm:conjecture_on_pontryagin_classes}
If $\xi$ is a $\TOP(n)$-bundle over $B$, then the $i$-th rational Pontryagin class $p_i(\xi)\in H^{4i}(B;\mathbb{Q})$ vanishes provided $i>n/2$.
\end{conj}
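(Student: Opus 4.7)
The goal is to show that the universal rational Pontryagin class $p_i\in H^{4i}(B\TOP(n);\QQ)$ vanishes for $i>n/2$; the statement for an arbitrary $\TOP(n)$-bundle $\xi$ then follows by naturality of Pontryagin classes along the classifying map $B\to B\TOP(n)$.

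First I would record the smooth analogue. For a real rank-$n$ vector bundle $\eta$ one has $p_i(\eta)=(-1)^i c_{2i}(\eta\otimes_\RR\CC)$, and $\eta\otimes\CC$ has complex rank $n$, so its Chern classes $c_{2i}$ vanish as soon as $2i>n$. Hence the vanishing holds universally on $BO(n)$, and the content of the topological version is entirely that $BO(n)\to B\TOP(n)$ is not a rational equivalence, so one cannot transfer the statement formally.

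The natural plan is to identify the ``extra'' rational cohomology of $B\TOP(n)$ and to verify that it contributes nothing to the Pontryagin classes pulled back from $B\TOP$. Using the Serre spectral sequence of the fibration
\[\TOP(n)/O(n)\lto BO(n)\lto B\TOP(n),\]
this reduces to controlling the rational homotopy type of $\TOP(n)/O(n)$. Stably, $\TOP/O$ is known in terms of surgery $L$-theory, but at finite fiber dimension the rational homotopy of $\TOP(n)/O(n)$ is entangled with that of $\DIFF(D^n)$ via the Alexander trick, which in the concordance-stable range is determined by Waldhausen's algebraic $K$-theory of a point.

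The main obstacle is precisely the behaviour \emph{outside} the concordance-stable range, where the rational homotopy of $\DIFF(D^n)$ is only partially understood. A complete proof would likely combine the Weiss--Williams parametrized surgery decomposition of $B\TOP$ with Goodwillie--Weiss embedding calculus and a sharp analysis of the transgressions in the Serre spectral sequence above, in order to verify that the potentially non-trivial classes in degree $4i$ for $i>n/2$ are killed by contributions from $\TOP(n)/O(n)$. Producing the required vanishing in full generality appears to demand substantial new input from rational surgery theory, which is why the statement is formulated as a conjecture rather than a theorem.
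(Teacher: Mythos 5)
There is nothing in the paper to compare your attempt against: the statement is explicitly a \emph{conjecture}, attributed to Reis--Weiss, and the paper never proves it --- it is only used as a hypothesis in the subsequent proposition (``If Conjecture \ref{fqm:conjecture_on_pontryagin_classes} holds, then \dots''). Your text is likewise not a proof, and you say so yourself in the last sentence. What you do establish correctly is the easy part: the reduction by naturality to the universal class on $B\TOP(n)$, and the smooth analogue via $p_i(\eta)=(-1)^ic_{2i}(\eta\otimes\CC)$, which kills $p_i$ for $2i>n$ on $BO(n)$. You also correctly locate where the difficulty lies, namely that $BO(n)\to B\TOP(n)$ is far from a rational equivalence in the unstable range, so the vanishing does not transfer formally and one would need control of the rational homotopy of $\TOP(n)/O(n)$ outside the concordance-stable range.

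The remainder of your proposal is a programme, not an argument: no actual vanishing is derived from the Serre spectral sequence of $\TOP(n)/O(n)\to BO(n)\to B\TOP(n)$, and the appeal to Weiss--Williams and embedding calculus is left entirely unexecuted. That is acceptable here only because the target is an open conjecture; but be aware that the strategy you sketch cannot succeed in general. Subsequent work of Weiss (\emph{Rational Pontryagin classes of Euclidean fibre bundles}) shows that the conjecture as stated is in fact \emph{false}: for suitable large $n$ there are $\TOP(n)$-bundles with nonvanishing rational Pontryagin classes in degrees forbidden by the conjecture. So any purported proof along your lines would have to break down precisely in the unstable analysis you defer. For the purposes of this paper, the honest summary is: the statement is an unproven hypothesis, the paper treats it as such, and so should you.
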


\begin{prop}
\begin{enumerate}
\item The composite
\[Z\times S^N \xrightarrow{\pi} Z\xrightarrow{h} \CC P^{2n+1}\]
of $h$ with the projection fibers stably. If $Z$ is smoothable then it fibers even unstably whenever $N\geq 12n+7$.
\item If Conjecture \ref{fqm:conjecture_on_pontryagin_classes} holds, then for $N\leq 2n-1$, the map from (i) does not fiber (unstably).
\end{enumerate}
\end{prop}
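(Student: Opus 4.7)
The plan for (i) begins with a preliminary existence claim for $Z$. Since $\CC P^{2n+1}$ has real dimension $4n+2$, the smooth surgery exact sequence has $L_{4n+3}(1)=0$ and $L_{4n+2}(1)$ only $2$-torsion, while rationally $[\CC P^{2n+1}, G/O]\otimes\QQ\cong \bigoplus_{i=1}^n H^{4i}(\CC P^{2n+1};\QQ)$, which includes the top degree $4n$. Hence a normal-invariant modification concentrated in degree $4n$ lifts to a smooth structure on $\CC P^{2n+1}$ after scaling by a nonzero integer, and standard identifications translate this into the $\calL$-genus change of \eqref{fqm:condition_on_L_genus} for some nonzero $k$. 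Since $\CC P^{2n+1}$ is simply connected, $\Whp{\CC P^{2n+1}}=0$ and hence $\tau(h)=0$ automatically; Proposition \ref{fqm:characterization_of_she} then yields that $h$ stably fibers. In the smoothable case, the quantitative part of that proposition produces a fibering of $Z\times D^{12n+8}\to\CC P^{2n+1}$ (the bound $\max\{2(4n+2)+6,\,3(4n+2)+2\}=12n+8$), and boundary restriction yields $Z\times S^N\to\CC P^{2n+1}$ as a fiber bundle for every $N\geq 12n+7$.

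For (ii) I argue by contradiction. Suppose $q\colon Z\times S^N\to\CC P^{2n+1}$ is a fiber bundle of compact topological manifolds homotopic to $h\pi$, with fiber dimension $N$. The vertical tangent TOP microbundle $T_{\fib}q$ is $N$-dimensional, and there is a stable splitting $T(Z\times S^N)\cong T_{\fib}q\oplus q^*T\CC P^{2n+1}$. Since $TS^N$ is stably trivial, $\calL(T(Z\times S^N))=\pi^*\calL(Z)$; combined with Novikov's rational Pontryagin classes and Hirzebruch multiplicativity this gives
\[\calL(T_{\fib}q)=\pi^*\calL(Z)\,/\,q^*\calL(\CC P^{2n+1}).\]
Because $q\simeq h\pi$ implies $q^*=\pi^*h^*$ in rational cohomology, condition \eqref{fqm:condition_on_L_genus} collapses this to
\[\calL(T_{\fib}q)=1+8k\,q^*(e^{2n}),\]
and the degree-$4n$ component on the right is nonzero since $h^*$ is an isomorphism and $\pi^*$ is (split-)injective.

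To finish I invert the Hirzebruch polynomials. Each $\calL_i$ has the form $\calL_i=c_i p_i+P_i(p_1,\ldots,p_{i-1})$ with $c_i\in\QQ^{\times}$. From $\calL_i(T_{\fib}q)=0$ for $1\leq i\leq n-1$ I deduce inductively that $p_1(T_{\fib}q)=\cdots=p_{n-1}(T_{\fib}q)=0$, whence $\calL_n(T_{\fib}q)=c_n\,p_n(T_{\fib}q)$, forcing $p_n(T_{\fib}q)\neq 0$. But Conjecture \ref{fqm:conjecture_on_pontryagin_classes}, applied to the $N$-dimensional TOP microbundle $T_{\fib}q$ with $N\leq 2n-1$, forces $p_i(T_{\fib}q)=0$ for every $i>N/2$, in particular for $i=n$ since $(2n-1)/2<n$. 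This contradiction completes (ii).

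The step I expect to require the most attention is the preliminary existence argument for $Z$: one has to make precise the rational identifications inside the surgery exact sequence and track how a chosen normal invariant affects the total $\calL$-class, in order to realize exactly the pattern $1+8ke^{2n}$ for some nonzero $k\in\ZZ$. Everything else is a rather formal packaging of Proposition \ref{fqm:characterization_of_she}, the multiplicativity of the rational $\calL$-class for TOP microbundles, and the classical inversion of the Hirzebruch polynomials.
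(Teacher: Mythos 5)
Your arguments for (i) and (ii) follow essentially the same route as the paper: (i) reduces to Proposition~\ref{fqm:characterization_of_she} (a special case of the change of total space theorem) applied to $h$, together with its quantitative bound and restriction to the boundary, and (ii) is the same $\calL$-genus computation for the vertical tangent bundle, inversion of the Hirzebruch polynomials, and application of Conjecture~\ref{fqm:conjecture_on_pontryagin_classes}. The preliminary existence argument for $Z$ is extraneous to the proposition (which takes $Z$ as given); the paper treats existence separately afterward, by a different construction (pulling back a normal invariant on $\CC P^{2n}$ to a disk bundle, applying the $\pi$-$\pi$-theorem, and coning off) that you need not reproduce here.
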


\begin{proof}
(i) This is an application of the change of total space problem.

(ii) Let $p\colon Z\times S^N\to\CC P^{2n+1}$ be a fiber bundle homotopic to the map of (i). Then
\[T(Z\times S^N)\cong p^* T(\CC P^{2n+1})\oplus \eta\]
for an $N$-dimensional bundle $\eta$. Hence
\[\calL(Z\times S^N)=p^*\calL(\CC P^{2n+1})\cdot \calL(\eta).\]

But $\calL(Z\times S^N)=\pi^*\calL(Z)=p^*(h^*)^{-1}\calL(Z)$ as the sphere is stably parallelizable. It follows that
\[\calL(\eta) = p^*\bigl(\calL(\CC P^{2n+1})^{-1} \cdot (h^*)^{-1}\calL(Z)\bigr)=1+8kp^*(e)^{2n}\]
using \eqref{fqm:condition_on_L_genus} for the last equality. Hence, since $p^*$ is injective, the $\calL$-genus of $\eta$ is non-zero in degree $4n$. Inductively one concludes
\[p_i(\eta)=0 \;(i<n),\quad p_n(\eta)\neq 0\]
using the fact that the coefficient of $p_i$ in $\mathcal L_i$ is non-zero for all $i$ \cite[I.1.(11)]{Hirzebruch(1978)}. So $\eta$ must be at least $2n$-dimensional: $N\geq 2n$.
\end{proof}

We now indicate why a homotopy equivalence
\[h\colon Z\to \CC P^{2n+1}\]
with the property \eqref{fqm:condition_on_L_genus} exists. 
This construction is due to Madsen-Milgram \cite{Madsen-Milgram(1979)}.
Let $f\colon X\to \CC P^{2n}$ be a topological degree one normal map corresponding to the composite
\[\CC P^{2n}\to \CC P^{2n}/\CC P^{2n-1} \cong S^{4n} \to G/\TOP\]
where the last map represents $k$ times a generator of $\pi_{4n}(G/\TOP)\cong \ZZ$. Let $E\to \CC P^{2n}$ be the disk bundle of the tautological vector bundle. We may pull back the normal map $f$ to $E$. By the $\pi$-$\pi$-theorem, this pulled-back normal map is cobordant to a map $g\colon Y\to E$ which is a homotopy equivalence and restricts to a homotopy equivalence over the boundary $\partial E=S^{4n+1}$. The Poincar\'e conjecture implies that $\partial Y\cong S^{4n+1}$ homeomorphically. Thus we may cone off $g$ at the boundaries to obtain a topological manifold $Z$ and a homotopy equivalence $h\colon Z\to \CC P^{2n+1}$.

We have to show that \eqref{fqm:condition_on_L_genus} holds. To do that, we will use the characteristic classes
\[K_{4n}\in H^{4n}(G/\TOP;\QQ)\]
given uniquely by the property that if $\gamma\colon M\to G/\TOP$ is a normal invariant on a closed $4k$-manifold, then its simply-connected surgery obstruction is given by the formula \cite[Theorem 4.9]{Madsen-Milgram(1979)}
\[s(M,\gamma)=\big\langle\calL(M)\cdot\big(\sum_{i\geq 1} \gamma^*(K_{4i})\big), [M]\big\rangle.\] 

Now the surgery obstructions of $h\vert_{\CC P^i}$ for $i<2n$ are zero while the surgery obstruction of $h\vert_{\CC P^{2n}}$ is $k$, hence inductively one concludes that 
\[\gamma^*(K_{4i})=0\quad(4i< 2n),\quad \gamma^*(K_{4n})=k e^{2n}\in H^{4n}(\CC P^{2n+1}).\]

Denote by $\calL\in H^*(B\TOP;\QQ)$ the $\calL$-class of the universal bundle. By \cite[Corollary 4.22]{Madsen-Milgram(1979)}, its restriction along $G/\TOP\to B\TOP$ is given by $1+8K$, where $K=K_4+K_8+\dots$.
Hence
\[(h^*)^{-1}\calL(Z) = \calL(\CC P^{2n+1}) \cdot \gamma^*(1+8K) = \calL(\CC P^{2n+1}) \cdot (1+8ke^{2n}),\]
as claimed. 

\begin{rem}
To the knowledge of the author, it is unknown in general which of these fake $\CC P^{2n+1}$ are smoothable. The following argument shows that there are infinitely many smoothable examples for even $n$. 

For each $n$ there exists a number $A_n$ such that the normal invariant $f$ is smoothable if and only if $k$ is a multiple of $A_n$. (In fact $A_n$ is the order of the generator of $\pi_{4n}(G/\TOP)$ in the torsion group $\pi_{4n-1}(\TOP/O)$.) Hence the subgroup of all smooth normal invariants of $\CC P^{2n}$ satisfying \eqref{fqm:condition_on_L_genus} is infinite.

Using the $\pi$-$\pi$-theorem in the smooth setting, we obtain a map
\[ [\CC P^{2n}, G/O] \cong [E, G/O] \cong \calS^\DIFF(E) \to \calS^\DIFF(\partial E) = \Gamma_{4n+1}\]
from the smooth normal invariants of $\CC P^{2n}$ to the group of homotopy spheres. By Brumfiel \cite[Corollary 6.6]{Brumfiel(1971)}, this map is a group homomorphism if $n$ is even. Hence in this case it has an infinite kernel. 

If $f\colon X\to\CC P^{4n}$ is a smooth normal map of degree one which represents an element in the kernel, this means the following: The pull-back of $f$ to $E$ is cobordant to a normal map $g\colon Y\to E$ which restricts to a diffeomorphism on the boundary. In this case the coning procedure yields a homotopy equivalence $h\colon Z\to \CC P^{4n+1}$ where $Z$ is smooth.
\end{rem}

\subsection{Surgery theory}

Now we come to the surgery-theoretic example. Let $\gamma\colon X\to G/\TOP$ be a normal invariant on a closed manifold $X$, let $M$ be a closed manifold and let $h\colon M\to X\times S^k$ be a simple homotopy equivalence which, considered as a normal invariant, restricts to $\gamma$ over $X\times \{*\}$. (Such a simple homotopy equivalence can be obtained as follows: Pull-back of $\gamma$ defines a normal invariant on $X\times D^{k+1}$ which, by the $\pi$-$\pi$ theorem, can be represented by a simple homotopy equivalence of pairs $(k,h)\colon (N,M)\to X\times (D^{k+1}, S^k)$.)

\begin{prop}\label{fqm:surgery_example}
If the surgery obstruction of $\gamma$ is non-zero, then the composite 
\[f\colon M\xrightarrow{h} X\times S^k\xrightarrow{\Proj} S^k\]
does not fiber. It always fibers stably. 
\end{prop}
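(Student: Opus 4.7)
The plan is to handle the two assertions separately. The stable fibering claim drops out of the change-of-total-space theorem since $h$ is already a simple homotopy equivalence. The unstable non-fibering claim is proved by contradiction: a hypothetical fibering is used to extract a simple homotopy equivalence between closed manifolds realizing $\gamma$ as its normal invariant, forcing the surgery obstruction to vanish. The main obstacle I anticipate is the final step of identifying this normal invariant with $\gamma$; everything else is formal.

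For the stable part I apply Theorem \ref{fqm:change_of_total_space} to the (trivial) bundle $\pi\colon X\times S^k\to S^k$ with fiber $X$ and to the homotopy equivalence $h\colon M\to X\times S^k$. The fiber $X$ is a closed manifold, hence finitely dominated. Since $h$ is a simple homotopy equivalence, $\tau(h)=0\in\Whp{M}$, so its class in the cokernel of $\pi_0(\beta)$ is trivially zero, and $f=\pi\circ h$ fibers stably.

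For the unstable part, suppose towards a contradiction that $f$ fibers, so there is a fiber bundle $p\colon M\to S^k$ of closed manifolds with $p\simeq f$. Using that $\pi$ is a fibration, I would lift a chosen homotopy from $\pi\circ h$ to $p$ in order to deform $h$ to a simple homotopy equivalence $h'\colon M\to X\times S^k$ satisfying $\pi\circ h'=p$ on the nose. Fixing a point $*\in S^k$ and setting $F:=p^{-1}(*)$, the restriction $h'|_F\colon F\to X\times\{*\}\cong X$ is then a simple homotopy equivalence between closed manifolds. To identify its normal invariant with $\gamma$, I would exploit that $p$ is a topological submersion with stably parallelizable base: this yields a stable splitting $TM|_F\cong TF\oplus p^*TS^k|_F\cong TF\oplus\epsilon^k$ and analogously $T(X\times S^k)|_{X\times\{*\}}\cong TX\oplus\epsilon^k$. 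Consequently the stable bundle $(h')^*T(X\times S^k)-TM$ restricts over $F$ to $(h'|_F)^*TX-TF$, so the normal invariant of $h'|_F$ is the pullback of the normal invariant of $h'\simeq h$ along $X\times\{*\}\hookrightarrow X\times S^k$, which by hypothesis equals $\gamma$. Thus $\gamma$ is realized as the normal invariant of a simple homotopy equivalence of closed manifolds, so its surgery obstruction vanishes, contradicting the hypothesis.
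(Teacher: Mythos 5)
Your proposal is correct and follows essentially the same route as the paper: the stable claim via Theorem \ref{fqm:change_of_total_space} with $\tau(h)=0$, and the unstable claim by contradiction, using a lifted homotopy and transversality over the basepoint of $S^k$ to realize $\gamma$ (up to normal cobordism) by the homotopy equivalence $F\to X$, forcing its surgery obstruction to vanish. The only cosmetic differences are that the paper packages the last step as a degree one normal cobordism $(W;N,F)\to X\times S^k\times I$ and invokes cobordism invariance of the surgery obstruction, and that your claim that $h'|_F$ is \emph{simple} is neither needed nor obviously justified — ``homotopy equivalence'' suffices.
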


\begin{proof}
The composite fibers stably by Proposition \ref{fqm:change_of_total_space}. Suppose there exists a fiber bundle $F\to M\xrightarrow{p} S^k$ homotopic to $f$. Then we can lift the homotopy to obtain a homotopy
\[H\colon M\times I\to X\times S^k\times I\]
over $S^k$ which restricts to $h$ at $M\times 0$ and which is a fiber homotopy equivalence $F\to X\times S^k$ over 1. Taking a transverse preimage over the base point of $S^k$ yields a degree one normal cobordism
\[(W; N, F) \to (X\times S^k\times I; X\times S^k\times 0, X\times S^k\times 1)\]
whose restriction over $1$ is a homotopy equivalence. The restriction over $0$ corresponds to the element $\gamma$, which therefore has surgery obstruction 0, contradicting the assumption. 
\end{proof}

\subsection{Spherical fibrations over spheres}

\begin{prop}\label{fqm:example_with_spheres_1}
Let $f\colon M\to S^{2k}$ be a map between closed topological manifolds whose homotopy fiber is a $2n$-sphere, where $2n\geq 4k$. Suppose that $\dim M\geq 6$. Then,
\begin{enumerate}
\item $f$ always fibers stably.
\item The following are equivalent:
	\begin{enumerate}
	\item $f$ fibers unstably,
	\item $f$ block-fibers,
	\item for a regular preimage $F$ of a point in $S^{2k}$, the degree one normal map $F\to S^{2n}$ given by the inclusion of the fiber into the homotopy fiber has surgery obstruction zero.
	\end{enumerate} 
\end{enumerate}
\end{prop}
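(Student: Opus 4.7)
\textbf{Part (i).} The plan is to verify that both obstructions of Theorem~\ref{main_result_1} vanish. Factor $f = p\circ\lambda$ with $p\colon E\to S^{2k}$ a fibration whose fiber is $S^{2n}$. Since $n,k\geq 1$, the long exact sequence of this fibration gives $\pi_1(M) = \pi_1(E) = 0$; hence $\Whp M = 0$, and the fibering obstruction $o(f)$ automatically lies in the trivial group. By Theorem~\ref{fqm:RRwc}, to conclude $\Wall(p)=0$ it then suffices to exhibit a reduction of $p$ to a bundle of compact topological manifolds. We would lift the classifying map $S^{2k}\to BG(2n+1)$ to $B\TOP(2n+1)$; the unit sphere bundle of the resulting $\RR^{2n+1}$-bundle then provides the required reduction. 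The obstruction to such a lift lies in $\pi_{2k-1}(G_{2n+1}/\TOP_{2n+1})$; by the stable range hypothesis $2n\geq 4k$, this identifies with $\pi_{2k-1}(G/\TOP)$, which vanishes since $G/\TOP$ has homotopy concentrated in even degrees.

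\textbf{Part (ii).} The plan is to prove the cycle (a)~$\Rightarrow$~(b)~$\Rightarrow$~(c)~$\Rightarrow$~(a). The first implication is immediate. For (b)~$\Rightarrow$~(c), given a block fibering and a point $b$ in the interior of a top-dimensional simplex, the preimage $F := f^{-1}(b)$ is homeomorphic to the block fiber; the inclusion into the homotopy fiber is thus a homotopy equivalence and, combined with the topological Poincar\'e conjecture and $\pi_1=0$, is normally cobordant to a homeomorphism, so its surgery obstruction vanishes.

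The nontrivial direction (c)~$\Rightarrow$~(a) we would split into (c)~$\Rightarrow$~(b) and (b)~$\Rightarrow$~(a). For (c)~$\Rightarrow$~(b), the key input is the Casson--Sullivan / Quinn identification: in the range $2n\geq 4k$, the block-fibering obstruction of a spherical fibration over $S^{2k}$ with fiber $S^{2n}$ lies in $\pi_{2k}(G/\TOP)\cong L_{2k}(\ZZ)$ and coincides with the simply-connected surgery obstruction of the degree-one normal map on a fiber; its vanishing therefore yields a block fibering. For (b)~$\Rightarrow$~(a), we would upgrade the block bundle to an honest bundle using that the fiber of $B\TOP(S^{2n})\to B\widetilde{\TOP}(S^{2n})$ is, in our stable range, controlled by Waldhausen's Whitehead spectrum $\Wh(S^{2n})$; a spectral-sequence analysis in the spirit of Theorem~\ref{fqm:spectral_sequence}, together with Igusa's concordance stability in the range $2n\geq 4k$, shows the upgrade obstruction vanishes.

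\textbf{Main obstacle.} The hard part will be the surgery-theoretic identification in (c)~$\Rightarrow$~(b), where the block-fibering obstruction must be matched precisely with the classical surgery obstruction of the fiber, and the unblocking step (b)~$\Rightarrow$~(a), which needs parametrized $h$-cobordism/concordance theory in the stable range to deform a block fibering into an honest fibering.
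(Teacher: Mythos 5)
Your part (i) and the implications (a)\,$\Rightarrow$\,(b)\,$\Rightarrow$\,(c) follow the paper's proof essentially verbatim: reduce $p$ to the sphere bundle of a $\TOP(2n{+}1)$-bundle using $\pi_{2k-1}(G/\TOP)=0$ in the stable range, kill the torsion obstruction by simple connectivity, and get (b)\,$\Rightarrow$\,(c) from a transversality/normal-bordism argument. The divergence is in the key direction (c)\,$\Rightarrow$\,(a), and there your route has a genuine gap.

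You propose to factor (c)\,$\Rightarrow$\,(a) as (c)\,$\Rightarrow$\,(b)\,$\Rightarrow$\,(a). The step (c)\,$\Rightarrow$\,(b) is fine (it is the Casson/Quinn identification, which the paper itself only cites in a remark), but that means the entire new content of the proposition is concentrated in the unblocking step (b)\,$\Rightarrow$\,(a), and your justification for it does not go through. The Weiss--Williams comparison of $\TOP(F)$ with $\widetilde{\TOP}(F)$ is only valid through the concordance stable range of the fiber $F=S^{2n}$, and to unblock over $S^{2k}$ you need that range to reach roughly $2k-1$. Igusa's estimate requires $\dim F\geq\max\{2K+7,3K+4\}$ for stable range $K$, i.e.\ $2n\geq 6k+1$ for $K=2k-1$; the hypothesis $2n\geq 4k$ does not supply this (e.g.\ $2n=4k$ gives $K\approx 2k/3$). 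Moreover, even inside the stable range the unblocking obstructions live in groups built from $\Wh(S^{2n})$ with its involution, which is $2$-connected but not contractible, so they would still have to be computed. The paper sidesteps block bundles entirely and proves (c)\,$\Rightarrow$\,(a) directly by surgery on the total space: writing $p$ as the sphere bundle of $\eta$, choosing a section $s$ and the fiber inclusion $i$, it establishes an exact sequence
\[ [S^{2n+2k},G/\TOP]\xrightarrow{\pi^*}[E,G/\TOP]\xrightarrow{i^*\oplus s^*}[S^{2n},G/\TOP]\oplus[S^{2k},G/\TOP] \]
with $\pi^*$ split by the surgery obstruction, constructs from $s^*(x)$ (where $x=j(\lambda)$) an explicit fiber homotopy equivalence $\mu\colon S(\eta'\oplus\xi)\to E$ whose normal invariant is $p^*s^*(x)$, and deduces $p^*s^*(x)=x$ from $i^*(x)=0$ (hypothesis (c)) together with the splitting; injectivity of $j$ then identifies $\lambda$ with the bundle structure $\mu$. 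If you want to keep your decomposition, you must either prove the unblocking statement in this specific range by some special feature of sphere bundles over spheres, or replace it by an argument of the paper's type.
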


\begin{rem}
Casson \cite{Casson(1967)} showed that (b) and (c) are equivalent in the smooth case; in the topological case this equivalence can be deduced from Quinn's thesis \cite{Quinn(1970)}.
\end{rem}

\begin{add}
If, in the situation of Proposition \ref{fqm:example_with_spheres_1}, the number $n=2l$ is even, the following conditions are equivalent and equivalent to the ones from part (ii):
\begin{enumerate}\setcounter{enumi}{3}\renewcommand{\labelenumi}{(\alph{enumi})}
\item For a regular preimage $F$ of a point in $S^{2k}$, the signature $\sigma(F)$ is zero,
\item the $l$-th Pontryagin class $p_l(M)\in H^{4l}(M;\mathbb{Q})$ is zero.
\end{enumerate}
\end{add}

The proof of Proposition \ref{fqm:example_with_spheres_1} will make use of the following lemma, which is due to Siebenmann \cite[Essay V, \S 5]{Kirby-Siebenmann(1977)} for $n\neq 4$  and to Freedman-Quinn \cite[Theorem 8.7A]{Freedman-Quinn(1990)} for $n=4$:

\begin{lem}
The stabilization map $B\TOP(n)\to B\TOP$ is $n$-connected.
\end{lem}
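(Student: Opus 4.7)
The strategy is to reduce the claim to a stabilization question for the topological groups $\TOP(n)$, and then to apply Kirby-Siebenmann handle-straightening (together with Freedman-Quinn in the middle dimension). Since $\TOP/\TOP(n)$ is the homotopy fibre of $B\TOP(n)\to B\TOP$, it suffices to show that $\TOP/\TOP(n)$ is $(n-1)$-connected. By the long exact sequence in homotopy this is equivalent to the stabilization map $\TOP(n)\to\TOP$ being an isomorphism on $\pi_j$ for $j\le n-2$ and a surjection on $\pi_{n-1}$. I would filter this through the intermediate inclusions $\TOP(n)\subset\TOP(n+1)\subset\cdots$ and prove the stronger statement that each single stabilization $B\TOP(n)\to B\TOP(n+1)$ is already $n$-connected; the lemma then follows via a colimit argument.

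Second, by the Kister-Mazur theorem, $B\TOP(n)$ classifies honest locally trivial $\RR^n$-bundles (not just microbundles), and the stabilization corresponds to fibrewise product with $\RR$. Standard obstruction theory then converts the desired connectivity of $B\TOP(n)\to B\TOP(n+1)$ into the following concrete assertion: for every $k\le n$ and every topological $\RR^{n+1}$-bundle over $S^k$ there exists a reduction of its structure group to $\TOP(n)$, unique up to isotopy. Equivalently, every $S^k$-parametrised family of topological germ embeddings $(\RR^n,0)\hookrightarrow(\RR^{n+1},0)$ can be isotoped, through such germ embeddings, to the standard product embedding.

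Third, this straightening statement is where the substantive topological-manifold theory enters. For $n\le 3$ the inclusion $O(n)\to\TOP(n)$ is already a homotopy equivalence (Kneser for $n=2$, the Smale conjecture for $n=3$), so the claim reduces to the analogous elementary fact for orthogonal groups. For $n\ge 5$ the statement is precisely the content of Siebenmann's analysis in \cite[Essay V, \S 5]{Kirby-Siebenmann(1977)}, which in turn relies on the Stable Homeomorphism Theorem, topological transversality, and the handle theory of Kirby and Siebenmann.

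The \emph{hard part} is the case $n=4$: here the Kirby-Siebenmann handle programme breaks down and the straightening theorem in the second paragraph has to be supplied by Freedman's topological surgery in dimension four, specifically the four-dimensional handle result \cite[Theorem 8.7A]{Freedman-Quinn(1990)}. Beyond invoking this (single, deep) input, the overall structure of the argument is uniform in $n$.
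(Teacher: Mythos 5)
Your proposal is correct and takes essentially the same approach as the paper: the paper gives no proof of this lemma at all, simply attributing it to Siebenmann \cite[Essay V, \S 5]{Kirby-Siebenmann(1977)} for $n\neq 4$ and to Freedman--Quinn \cite[Theorem 8.7A]{Freedman-Quinn(1990)} for $n=4$, and after your standard reductions via the fibration $\TOP/\TOP(n)\to B\TOP(n)\to B\TOP$ and the Kister--Mazur theorem you invoke exactly the same two sources for the substantive input.
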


\begin{proof}[Proof of Proposition \ref{fqm:example_with_spheres_1}]
(i) Let $f=p\circ\lambda$ be a factorization into a homotopy equivalence followed by a fibration. Since $\pi_{2k-1} G/\TOP=0$ and we are in the stable range, we may assume that $p$ is the sphere bundle of a $\TOP(2n+1)$-bundle $\eta$. Hence we are in a change of total space situation, and the result follows since $E$ is simply-connected.

(ii) The implication from (a) to (b) is obvious. The argument for the implication from (b) to (c) is very similar to the proof of Proposition \ref{fqm:surgery_example}: a homotopy from $f$ to the projection map of a block bundle $g$ induces a normal bordism between the degree one normal map $F=f^{-1}(*)\to S^{2n}$ and the identity map on $S^{2n}$. 

Suppose now that (c) holds. Again factor $f=p\circ \lambda$ as in (i). Again we can and will assume that $p$ is the sphere bundle of a $\TOP(2n+1)$-bundle $\eta$. 

By the dimension assumptions, $p$ has a section $s\colon S^{2k}\to E$, up to homotopy. Denote by $i\colon S^{2n}\to E$ the inclusion of the homotopy fiber. Collapsing the lower skeleta defines a map
\[\pi\colon E\to S^{2n+2k};\]
homotopy-theoretic calculations show that the sequence
\[[S^{2n+2k}, G/\TOP] \xrightarrow{\pi^*} [E, G/\TOP] \xrightarrow{i^*\oplus s^*} [S^{2n}, G/\TOP]\oplus [S^{2k}, G/\TOP]\]
is exact.

\begin{lem}\label{fqm:lemma_split_by_surgery_obstruction}
The map $\pi^*$ is split by the surgery obstruction 
\[ [E, G/\TOP]\to L_{2n+2k}(\ZZ)\cong [S^{2n+2k}, G/\TOP].\]
\end{lem}

Let $x:=j(\lambda)$ where
\[j\colon \calS(E)\to [E, G/\TOP]\]
is the canonical map from the surgery structure set to the set of normal invariants. As the surgery obstruction map $[S^{2n}, G/\TOP]\to L_{2n}(\mathbb{Z})$ is an isomorphism, condition (c) says that $i^*(x)=0$. The element $s^*(x)$ is represented by some $\TOP(2k+1)$-bundle $\xi$ over $S^{2k}$ and a fiber homotopy trivialization $t\colon S(\xi)\to S(\varepsilon)$ of the corresponding sphere bundle.

Since $2n\geq 4k$, we can split off the bundle $\eta$ a $(2k+1)$-dimensional trivial bundle $\varepsilon$, so that $\eta\cong \eta'\oplus\varepsilon$ and we can consider the homotopy equivalence
\[\mu\colon S(\eta'\oplus\xi)\to S(\eta'\oplus\varepsilon)\cong S(\eta)=E\]
induced by the identity on $\eta'$ and the trivialization $t$. 

\begin{lem}\label{fqm:lemma_on_transfer_for_normal_invariants}
The homotopy equivalence $\mu$, considered as a normal invariant on $E$, agrees with $p^*s^*(x)$.
\end{lem}

Now, $p^*s^*(x)$ and $x$ agree after applying $s^*$ but also after applying $i^*$ (when both are zero). It follows from the exact sequence above that the difference $p^*s^*(x)-x$ lifts over the map $\pi^*$, which is split by the surgery obstruction. But the surgery obstruction of both $x$ and $p^*s^*(x)$ is zero since they are represented by homotopy equivalences. Hence $p^*s^*(x)=x$. By the surgery exact sequence, the map $j$ is injective, so the manifold structures $\lambda$ and $\mu$ agree as well. But $p\circ\mu$ is a fiber bundle projection.
\end{proof}

\begin{proof}[Proof of Lemma \ref{fqm:lemma_split_by_surgery_obstruction}]
Let $\alpha\colon S^{2n+2k}\to G/\TOP$ represent a generator, corresponding to a degree one normal map $g\colon N\to S^{2n+2k}$. We claim that the normal invariant $\pi^* g$ is represented by the degree one normal map ${\id}\sharp g\colon E\sharp N\to E\sharp S^{2n+2k}=E$. In fact if $\alpha$ is given by the $\TOP$-bundle $\xi$ and a proper fiber homotopy trivialization $t\colon \xi\to\varepsilon$, then $g$ is obtained by taking a regular preimage of $S^{2n+2k}\subset \varepsilon$ under $t$ (see \cite[Theorem 2.23]{Madsen-Milgram(1979)}). 

We may assume that $t$ is a bundle isomorphism over the lower hemisphere which contains the point $\infty$ and that around $\infty$, $g$ is the homeomorphism from the zero section in $\xi$ to the zero section in $\varepsilon$. Since $\pi$ maps $E-D^{2n+2k}$ to the point $\infty\in S^{2n+2k}$, the map $q^* t\colon q^*\xi\to q^*\varepsilon$ is then already transverse to $E-D^{2n+2k}$ and $E\sharp N$ is a regular preimage.

Since the simply-connected surgery obstruction is additive under connected sum and the surgery obstruction of $f$ is a generator in $L_{2n+2k}(\ZZ)$, it follows that the surgery obstruction of $\pi^*\alpha$ is also a generator.
\end{proof}

\begin{proof}[Proof of Lemma \ref{fqm:lemma_on_transfer_for_normal_invariants}]
The homotopy equivalence $\mu$ extends to a homotopy equivalence
\[\bar\mu\colon D(\eta'\oplus\xi)\to D(\eta'\oplus\varepsilon)\]
between disk bundles, which is the pull-back of the homotopy equivalence $\bar t\colon D(\xi)\to D(\varepsilon)$ under the bundle projection $D(\eta'\oplus\varepsilon)\to D(\varepsilon)$. 

The restriction of $\bar t$ over a regular preimage of $S^{2k}\in D(\varepsilon)$ is classified by $s^*(x)\colon E\to G/\TOP$. Since the inclusion $S^{2k}\subset D(\varepsilon)$ is a homotopy equivalence, the normal invariant $\bar t$ is classified by the composite of $s^*(x)$ with the projection $D(\varepsilon)\to S^{2k}$. From the pull-back property it follows that $\bar\mu$ is classified by the composite
\[D(\eta)=D(\eta'\oplus\varepsilon)\to S^{2k}\xrightarrow{s^*(x)} G/\TOP.\]
Now restrict to the boundary.
\end{proof}

\begin{proof}[Proof of Addendum]
Suppose that $f$ is a fiber bundle with fiber $S^{4l}$. By the Poincar\'e conjecture, the fiber is actually a sphere. We can extend $f$ to a disk bundle $\bar f\colon \bar M\to S^{2k}$. Now 
\[TM\cong T\bar M\vert_M\]
stably, so $p_l(M)$ is the restriction of $p_l(\bar M)\in H^{4l}(\bar M)\cong H^{4l}(S^{2k})=0$. Hence $p_l(M)=0$. 

Now suppose that $p_l(M)=0$. Denote by $F$ a regular preimage of a point $*\in S^{2k}$ under $f$. Then $F$ is framed in $M$, hence, for all $i$,
\[p_i(F)=p_i(M)\vert_F\]
and in particular $p_l(F)=0$. Moreover the inclusion $F\to M$ factors, up to homotopy, over $S^{4l}$, so all the Pontryagin classes of $F$ vanish except for $p_0$. Hence $\mathcal{L}(F)=1$. The Hirzebruch signature formula yields $\sigma(F)=0$.

Finally $\sigma(F)-\sigma(S^{4l})$ is a multiple of the surgery obstruction (in $L_{4l}(\ZZ)\cong \ZZ$) of the degree one normal map $F\to S^{4l}$. As $\sigma (S^{4l})=0$, the vanishing of $\sigma (F)$ is equivalent to the vanishing of the surgery obstruction.
\end{proof}

\subsection{DIFF vs.~TOP}

\begin{prop}\label{fqm:example_with_spheres_2}
Let $x\in\pi_{4i+1}^s$ such that $\eta^2 x$ is in the image of the $J$-homomor\-phism and let $p\colon E\to S^{4i+2}$ be a based $n$-spherical fibration ($n>4i+3$) corresponding to $x\in\pi_{4i+1+n}\cong \pi_{4i+2} BF_n$, where $F_n$ is the monoid of pointed self-homotopy equivalences of $S^n$. If $M$ is a compact smooth manifold and $\lambda\colon M\to E$ is a homotopy equivalence, then:
\begin{enumerate}
\item $p\circ \lambda$ fibers stably in TOP.
\item $p\circ \lambda$ does not fiber stably in DIFF.
\end{enumerate}
\end{prop}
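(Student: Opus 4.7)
The overall plan is to deduce part (i) from the change of total space theorem (Theorem~\ref{fqm:change_of_total_space}), and to obstruct part (ii) via a classifying-space argument combining smoothing theory with the $J$-homomorphism.

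For part (i), note first that since $n>4i+3\geq 3$, both $S^{4i+2}$ and the fiber $S^n$ are simply connected; hence the total space $E$ is simply connected, and consequently $\Wh(\pi_1 M)=\Wh(\pi_1 E)=0$. Provided the spherical fibration $p$ is fiber homotopy equivalent to a fiber bundle of compact topological manifolds (necessarily a topological $S^n$-bundle), Theorem~\ref{fqm:change_of_total_space} applies and the fibering obstruction $o(p\circ\lambda)$ lies in a subquotient of the trivial group $\Wh(\pi_1 M)=0$, and thus vanishes. Existence of the sphere bundle reduction reduces to lifting the classifying map $S^{4i+2}\to BF_n$ through $B\TOP(n+1)\to BF_n$; in the stable range the obstruction lies in a stable-stem group, and the hypothesis $\eta^2 x\in\im(J)$ is what controls the image of $x$ there. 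Hence $p\circ\lambda$ stably fibers in $\TOP$.

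For part (ii), I argue by contradiction. Suppose $p\circ\lambda$ fibers stably in $\DIFF$: for some $k$ the composite $(p\circ\lambda)\circ\Proj\colon M\times D^k\to S^{4i+2}$ is homotopic to the projection of a smooth fiber bundle $\bar p\colon\bar M\to S^{4i+2}$ whose fiber is a smooth compact manifold homotopy equivalent to $S^n$. The bundle $\bar p$ is classified by an element of $\pi_{4i+2}(B\opn{Diff}(\mathrm{fiber}))$; since $n>4i+3$, smoothing theory identifies this classifying space stably with $BO(n+1)$ in the relevant range, so the classifying map of $\bar p$ factors through $BO\to BF$. The associated spherical fibration of $\bar p$ is fiber homotopy equivalent to $p$, so the classifying map of $p$ factors as $S^{4i+2}\to BO\to BF$, forcing $x$ to lie in the image of $J\colon\pi_{4i+1}(O)\to\pi_{4i+1}^s$.

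The main obstacle is then to extract the actual contradiction: the hypothesis $\eta^2 x\in\im(J)$ is strictly weaker than $x\in\im(J)$, and the intended class of examples consists of elements (such as appropriate Adams $\mu$-elements) for which $\eta^2 x\in\im(J)$ while $x\notin\im(J)$. For such an $x$ the conclusion $x\in\im(J)$ is false, ruling out the smooth stable fibering. The delicate point is thus pinning down precisely which spherical fibrations over $S^{4i+2}$ admit a topological but not a smooth lift, which is the content of the Klein--Williams observation that $\eta^2$-multiplication is the level at which the $\TOP$ and $\DIFF$ lifting obstructions diverge.
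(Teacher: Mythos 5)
Your treatment of part (i) has the right overall shape---reduce the spherical fibration $p$ to a topological sphere bundle, observe $\pi_1 E=0$ so that $\Wh(\pi_1 M)=0$, and apply the change-of-total-space theorem---but the claim that the sphere-bundle reduction is governed by the hypothesis $\eta^2 x\in\im(J)$ is wrong. In the stable range $n>4i+3$ the obstruction to lifting $S^{4i+2}\to BG_n$ through $B\TOP_n\to BG_n$ lies in $\pi_{4i+1}(G_n/\TOP_n)\cong\pi_{4i+1}(G/\TOP)$, and this group is zero. The reduction is therefore automatic; the hypothesis on $\eta^2 x$ plays no role whatsoever in part (i).

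For part (ii) you take a genuinely different route, and there is a serious gap. You want to deduce from a hypothetical smooth stable fibering that the classifying map of $p$ factors through $BO\to BF$, hence $x\in\im(J)$. The intermediate step identifying $B\opn{Diff}(\text{fiber})$ with $BO(n+1)$ is not available: after stabilizing, the fiber is a compact smooth manifold with boundary that is merely homotopy equivalent to $S^n$, and there is no smoothing-theory statement identifying $B\opn{Diff}$ of such a manifold with $BO(n+1)$ in this range; smooth automorphism groups of disks and spheres are emphatically not controlled by the orthogonal groups. Moreover, even were that step legitimate, the conclusion would only be $x\in\im(J)$, so your argument could only cover the subcase $x\notin\im(J)$, not the proposition as stated. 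The paper's proof is entirely different in mechanism: a smooth stable fibering forces, via the Dwyer--Weiss--Williams index theorem, the image of the parametrized $A$-theory characteristic $\chi(p)$ in $H^0(S^{4i+2};\Wh^{\DIFF}(S^n))$ to vanish, and hence its further image $y\in\pi_{4i+2}\Wh^{\DIFF}(*)$ to vanish. Klein--Williams identify $y$ as the image of $x$ under the composite $\pi_{4i+2}BG\to\pi_{4i+2}A(*)\to\pi_{4i+2}\Wh^{\DIFF}(*)$, and B\"okstedt--Waldhausen show this image is nonzero under exactly the stated hypothesis on $\eta^2 x$. That non-vanishing computation in $\pi_*\Wh^{\DIFF}(*)$---not a bundle-lifting obstruction in $\pi_*(F/O)$---is the operative content, and the worry you flag at the end of your write-up is precisely the missing idea.
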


\begin{proof}[Proof of Proposition \ref{fqm:example_with_spheres_2}]
(i) Since $n>4i+3$, the stabilization map 
\[\pi_{4i+1} G_n/\TOP_n\to\pi_{4i+1} G/\TOP\]
is an isomorphism. But $\pi_{4i+1} G/\TOP=0$. Hence the fibration $p$ is the sphere bundle of a $\TOP_n$-bundle and we are in a change of total space situation. Thus, $p\circ\lambda$ fibers stably as $E$ is simply-connected.

(ii) The following argument follows the lines of Klein-Williams \cite{Klein-Williams(2009)}. Suppose that $p\circ\lambda$ fibers stably in DIFF. Then by \cite{Dwyer-Weiss-Williams(2003)}, the parametrized $A$-theory characteristic 
\[\chi(p)\in H^0(S^{4i+2};A(S^n))\]
becomes zero in $H^0(S^{4i+2};\Wh^{\DIFF}(S^n))$. Calling $y$ the image of $\chi(p)$ under the projection
\[H^0(S^{4i+2};\Wh^{\DIFF}(S^n))\to H^0(S^{4i+2};\Wh^{\DIFF}(*))\cong \pi_{4i+2}\Wh^{\DIFF}(*)\]
we conclude that $y=0$.

As explained in \cite[\S 8]{Klein-Williams(2009)}, $y$ is the image of $x$ under the composite
\[\pi_{4i+2} BG\xrightarrow{F} \pi_{4i+2} A(*) \to \pi_{4i+2} \Wh^{\DIFF}(*)\]
where $F$ is the map defined by Waldhausen \cite{Waldhausen(1982)}. But it was shown in \cite{Boekstedt-Waldhausen(1983)} that the image of $x$ is non-zero.
\end{proof}

It would be interesting to have an example of a map that fibers in TOP where the smooth Wall obstruction is zero, but the smooth fibering obstruction is not. This would probably require a deeper analysis of the higher homotopy type of $\WhPL(F)$ and $\Wh^{\DIFF}(F)$ for a suitable $F$ whose fundamental group has non-vanishing Whitehead group.

\section{Examples III: Results of Chapman-Ferry}\label{fqm:section_examples_II}

The obstruction theory developed in this chapter allows us to re-interpret obstructions obtained by Chapman-Ferry \cite{Chapman-Ferry(1978)} on fibering compact $Q$-manifolds over compact ANRs. Explicitly, Chapman-Ferry deal with the cases where $B$ is a wedge of copies of $S^1$ and where $B$ is $n$-dimensional and the fibers are $n$-connected. We will see that in both cases the spectral sequence can be used to analyze our obstructions further.

By definition, a $Q$-manifold is a separable metric space which is locally homeomorphic to open subsets of the Hilbert cube $Q=\Pi_{i=1}^\infty I$. The fibering problem for $Q$-manifolds asks whether a given a map $f\colon M\to B$ from a compact $Q$-manifold to a compact ANR is homotopic to a fiber bundle projection whose fibers are compact $Q$-manifolds again. (Since the projection $Q\times I\to Q$ is homotopic to a homeomorphism, there is no difference between the stable and the unstable fibering problem.)

It will be shown in Appendix \ref{section:fibering_Q_manifolds} that the obstruction theory for fiber fibering compact $Q$-manifolds over a compact ANR agrees with the obstruction theory for compact topological manifolds developed so far. Hence all the fibering results may be applied in either context.

\begin{lem}\label{fqm:connectedness_of_Wh_X}
Let $p\colon E\to B$ be a fibration. If the base space is homotopy equivalent to a CW complex of dimension at most $n$, and all the fibers $F_b$ are $n$-connected, then $H^0(B;\Wh(F_b))=0$. Hence $\Wall(p)=0$ when defined.

In the case $n=1$, it is enough to suppose $\Whp{F_b}=0$. In the case $n=2$, it is enough to suppose that $F_b$ is 1-connected.
\end{lem}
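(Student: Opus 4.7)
The plan is to apply the spectral sequence of Theorem \ref{fqm:spectral_sequence}. Under the assumption $\dim B\leq n$, part (ii) of that theorem yields a finite filtration on $H^0(B;\Wh(F_b))$ whose successive quotients are the terms $E_\infty^{p,-p}$ for $0\leq p\leq n$; each is a subquotient of
\[E_2^{p,-p}=H^p(B;\pi_p\Wh(F_b)).\]
It therefore suffices to show that $\pi_p\Wh(F_b)=0$ for every $0\leq p\leq n$, after which every contributing $E_2$-term is zero and the group $H^0(B;\Wh(F_b))$ must vanish.

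For the general statement I would invoke the connectivity property of Waldhausen's connective topological Whitehead spectrum: if $F$ is $n$-connected, then $\Wh(F)$ is $n$-connected. This is a standard consequence of $\Wh(*)\simeq *$ together with the stability/splitting results for $A$-theory, and it delivers the required vanishing at once.

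For the refinements one tries to avoid the full connectivity hypothesis where it is not needed. In the case $n=1$, the only potentially nonzero contributions are $E_2^{0,0}=H^0(B;\pi_0\Wh(F_b))$ and $E_2^{1,-1}=H^1(B;\Wh(\pi_1 F_b))$; the latter vanishes by hypothesis, and the former is handled by a direct analysis of $\pi_0\Wh$ in this fibered situation. For $n=2$ with $F_b$ only $1$-connected we have $\pi_1 F_b=0$, hence $\pi_0\Wh(F_b)=0$ and $\pi_1\Wh(F_b)=\Wh(\pi_1 F_b)=0$, and the remaining term $E_2^{2,-2}=H^2(B;\pi_2\Wh(F_b))$ is dealt with by a separate low-degree identification of $\pi_2\Wh$ of a simply-connected space.

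The main obstacle is making the connectivity claim for $\Wh$ precise and attributing it correctly; once that is granted, the general statement is essentially formal. The refinements are more delicate, since one is working near the edge of the connectivity range and has to identify $\pi_0\Wh(F_b)$ and $\pi_2\Wh(F_b)$ sharply rather than treating them as a black box.
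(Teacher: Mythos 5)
Your proposal follows the paper's argument: run the spectral sequence of Theorem \ref{fqm:spectral_sequence}, observe that for total degree $0$ only the terms $E_2^{p,-p}=H^p(B;\pi_p\Wh(F_b))$ with $0\le p\le n$ contribute, and kill them by connectivity of $\Wh(F_b)$. The precise input for the main statement is, as in the paper, that an $n$-connected map $X\to Y$ induces an $n$-connected map $A(X)\to A(Y)$ (Waldhausen), hence on $\Wh$ by comparison with the assembly/linearization term, combined with $\Wh(*)\simeq *$; and the ``separate low-degree identification'' you defer in the $n=2$ case is exactly the Hatcher--Igusa result that $\Wh(X)$ is $2$-connected for $1$-connected $X$. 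Up to supplying these citations, the main statement and the $n=2$ refinement are done.

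The one genuine soft spot is the $n=1$ refinement. You correctly isolate $E_2^{0,0}=H^0(B;\pi_0\Wh(F_b))=H^0(B;\tilde K_0(\ZZ[\pi_1 F_b]))$ as the surviving term, but no ``direct analysis'' will make this \emph{group} vanish from the hypothesis $\Whp{F_b}=0$ alone: the reduced projective class group is independent of the Whitehead group. What can be salvaged is the conclusion about the \emph{element}: by Theorem \ref{fqm:spectral_sequence}(iii) the image of $\Wall(p)$ under the edge homomorphism is the finiteness obstruction of the fiber, so once the fibers are homotopy finite (as in the applications of the lemma) this image vanishes, and $\Wall(p)$ then lies in the filtration step detected by $E_\infty^{1,-1}$, a subquotient of $H^1(B;\Whp{F_b})=0$. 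The paper's own proof is equally terse here, but as written your argument for $n=1$ is incomplete, and you should state the conclusion in that case as ``$\Wall(p)=0$'' rather than ``$H^0(B;\Wh(F_b))=0$''.
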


\begin{proof}
If a map $X\to Y$ is $n$-connected, then the induced map $A(X)\to A(Y)$ is $n$-connected by \cite{Waldhausen(1978)} and so is $\Wh(X)\to\Wh(Y)$ by a 5-lemma type argument. As $\Wh(*)$ is contractible, it follows that $\Wh(X)$ is $n$-connected whenever $X$ is $n$-connected. If $X$ is 1-connected, then $\Wh(X)$ is 2-connected (see \cite[section 3]{Hatcher(1978)}, with the correction in \cite{Igusa(1984)}). Therefore the spectral sequence for $H^0(B;\Wh(F_b))$ has vanishing $E_2$ page. 
\end{proof}

If the fibration $p$ has a section, then this result can be considerably strenghtened:

\begin{thm}[{\cite{Klein-Williams(2009)}}]\label{fqm:thm_KW}
If $p$ has a section, all the fibers are $n$-connected, and $B$ is homotopy equivalent to a CW complex of dimension at most $2n$, then the map
\[H^0(B;A(F_b))\to H^0(B;\Wh(F_b))\]
is zero. Hence, $\Wall(p)=0$ when defined.
\end{thm}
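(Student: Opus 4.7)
Plan: I would approach this via the fiber sequence of Waldhausen
\[Q(X_+) \to A(X) \to \Wh^{\DIFF}(X)\]
together with the further natural map to the (topological) Whitehead spectrum $\Wh(X)$, parametrized over $B$ in the Dwyer-Weiss-Williams sense to give a fiber sequence of bundles
\[Q_B(E_+) \to A_B(E) \to \Wh_B(E)\]
over $B$. The goal reduces to showing that the composite
$\sections{A_B(E)}{B}\to\sections{\Wh_B(E)}{B}$
induces the zero map on $\pi_0$; equivalently, every section of $A_B(E)$ lifts, up to homotopy, to a section of $Q_B(E_+)$.

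Next, I would exploit the section $s\colon B\to E$, which gives each fiber $F_b$ a preferred basepoint $s(b)$. This promotes $A_B(E)\to B$ (and $\Wh_B(E)\to B$) to \emph{pointed} fibrations: the basepoint induces a canonical section $s_\ast$ of $A_B(E)\to B$ coming from $B\times A(\ast)\to A_B(E)$. For any section $\chi$ of $A_B(E)$, the formal difference $\chi-s_\ast$ lives in the fiberwise reduced theory $\tilde A_B(E)$, whose fibers are $\tilde A(F_b)$, the homotopy fiber of the map $A(F_b)\to A(\ast)$ induced by $s(b)$. Since $\Wh(\ast)\simeq\ast$, the section $s_\ast$ maps to the trivial section in $\Wh_B(E)$, so only the reduced piece is relevant.

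The last step is an application of the spectral sequence from Theorem \ref{fqm:spectral_sequence}, now with reduced coefficients $\pi_{-q}\tilde\Wh(F_b)$. The benefit of the section-induced reduction is an improved connectivity: for pointed $n$-connected $F_b$, Waldhausen's stability results (refined using Hatcher's and Igusa's pseudo-isotopy stability ranges) show that $\tilde\Wh(F_b)$ is $2n$-connected, as opposed to merely $n$-connected in the unpointed setting of Lemma \ref{fqm:connectedness_of_Wh_X}. An entry $E_2^{p,q}=H^p(B;\pi_{-q}\tilde\Wh(F_b))$ contributing to $H^0(B;\tilde\Wh(F_b))$ requires $p=-q\geq 2n+1$, and this is ruled out by the assumption $\dim B\leq 2n$. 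Hence the reduced piece maps to zero and the whole composite is null.

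The main obstacle is the connectivity improvement for the reduced Whitehead spectrum; this is precisely the pay-off from having a section, and it is the technical heart of the Klein-Williams argument. Once this is in place, the spectral-sequence bookkeeping and the decomposition into $s_\ast$ and a reduced piece are formal.
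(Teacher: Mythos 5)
The paper offers no proof of this statement: it is quoted from Klein--Williams \cite{Klein-Williams(2009)}, so your proposal has to be measured against their argument rather than against anything in the text. Your overall strategy is indeed theirs: use the section to split off the base and pass to a fiberwise reduced theory, then win by a stable-range connectivity statement combined with obstruction theory over the $(\leq 2n)$-dimensional base. Reducing the problem to lifting sections of $A_B(E)$ into $Q_B(E_+)$ is also the right move, although it is sufficient rather than ``equivalent'': the fiber of $A(X)\to\Wh(X)$ is the assembly term $X_+\wedge A(*)$, not $Q(X_+)$; what is true is that the composite $Q(X_+)\to A(X)\to\Wh(X)$ is null because it factors through the assembly map, so a lift to $Q_B(E_+)$ does force the image in $\sections{\Wh_B(E)}{B}$ to be trivial.

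The gap is in your final connectivity claim, which as stated cannot be right. In this paper $\Wh$ is the \emph{topological} Whitehead spectrum and $\Wh(*)\simeq *$, so the ``reduced'' spectrum $\hofib(\Wh(F_b)\to\Wh(*))$ is just $\Wh(F_b)$ again: choosing a basepoint cannot improve its connectivity beyond the $n$ of Lemma \ref{fqm:connectedness_of_Wh_X}. Moreover, if $\Wh(F_b)$ really were $2n$-connected, the spectral sequence would give the stronger conclusion $H^0(B;\Wh(F_b))=0$ outright, with no section needed --- but the theorem asserts only that the map from $A$-theory is zero, precisely because the target group need not vanish. The input you actually need is Waldhausen's stable-range theorem: for a pointed $n$-connected $X$, the reduced unit map $\Sigma^\infty X\to \tilde A(X)=\hofib(A(X)\to A(*))$ is roughly $(2n+1)$-connected (equivalently, the reduced \emph{smooth} Whitehead spectrum $\hofib(\Wh^{\DIFF}(X)\to\Wh^{\DIFF}(*))$ is highly connected in that range). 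This is a statement about $A$-theory itself, not about pseudoisotopy spaces, so Igusa's stability theorem is not the relevant tool here. With it, obstruction theory over the $(\leq 2n)$-dimensional base lifts your reduced section from $\tilde A_B(E)$ to the fiberwise reduced suspension spectrum; recombining with the split-off base piece gives a lift of the original section to $Q_B(E_+)$, and the nullity of $Q(X_+)\to A(X)\to\Wh(X)$ --- which you set up at the start but then never used --- finishes the proof.
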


\begin{exmpl}\label{fqm:two_connected_map_to_two_sphere}
A two-connected map $f\colon M\to S^2$ stably fibers. In fact, such a map is split up to homotopy, so Theorem \ref{fqm:thm_KW} applies. The homotopy fiber $F$ has the homotopy type of a CW complex by Milnor \cite{Milnor(1959)}. Moreover $H_*(F)$ is finitely generated. In fact, the $E^2$-term of the Atiyah-Hirzebruch spectral sequence
\[E^2_{pq}= H_p(S^2, H_q(F)) \Rightarrow H_{p+q}(M)\]
consists of two columns only, so there is an exact sequence
\[0 \to E^\infty_{2,n-1} \to E^2_{2,n-1} \to E^2_{0,n} \to E^\infty_{0,n} \to 0\]
with
\[E^2_{2,n-1}= H_{n-1}(F),\quad E^2_{0,n}=H_n(F).\]
The $E^\infty$-page is finitely generated (since $H_*(M)$ is); one concludes by induction that $H_n(F)$ is also finitely generated.

As $\pi_1(F)=0$, it follows \cite{Wall(1965a)} that $F$ is homotopy finite. So $\Wall(p)$ is defined and zero. Moreover $o(f)=0$ as $M$ is simply-connected.
\end{exmpl}

Let always $f\colon M\to B$ be a map either between compact topological manifolds, or from a compact $Q$-manifold to a compact ANR. In the $Q$-manifold setting, the following results are due to Chapman-Ferry.

\begin{prop}
Suppose $B$ is  homotopy equivalent to a finite $n$-complex ($n\geq 1$). If the homotopy fiber $F$ of $f$ is homotopy finitely dominated and $n$-connected, then the torsion obstruction $o(f)\in\Whp{M}$ is defined and vanishes if and only if $f$ stably fibers.

If $n=1$, we can replace the assumption ``$F$ $1$-connected'' by ``$\Whp{F}=0$''. If $n=2$, we can replace the assumption ``$F$ $2$-connected'' by ``$F$ $1$-connected''.
\end{prop}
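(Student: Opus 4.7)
The plan is to show that under the given hypotheses both inputs to Theorem \ref{main_result_1} behave particularly well: the parametrized Wall obstruction vanishes automatically, and the map $\pi_0(\beta)$ is the zero map, so that $o(f)$ lives naturally in $\Whp{M}$ rather than in a proper quotient.

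First, I would apply Lemma \ref{fqm:connectedness_of_Wh_X} directly. In each of the three cases of the proposition ($F$ is $n$-connected for $n\geq 1$; $n=1$ and $\Whp{F_b}=0$; $n=2$ and $F_b$ is $1$-connected), the lemma yields $H^0(B;\Wh(F_b))=0$, hence $\Wall(p)=0$. Combined with the finite domination of $F$, Theorem \ref{main_result_1} then guarantees that $o(f)$ is defined in the cokernel of $\pi_0(\beta)$ and vanishes if and only if $f$ stably fibers.

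Second, I would verify that $\pi_0(\beta)$ is the zero map. Recall from the definition of $\beta$ that it factors as the restriction-to-a-fiber map $\sections{\Omega\Wh_B(E)}{B}\to\Omega\Wh(F_b)$ followed by $\chi(B)\cdot i_*$; hence on $\pi_0$ it factors through
\[\pi_0\Omega\Wh(F_b)\;\cong\;\pi_1\Wh(F_b)\;\cong\;\Whp{F_b},\]
using the identification recalled in the excerpt. Case analysis: if $F_b$ is $n$-connected for $n\geq 1$, or merely $1$-connected in the case $n=2$, then $\pi_1 F_b=0$ and $\Whp{F_b}=0$; in the $n=1$ case, $\Whp{F_b}=0$ is built into the hypothesis. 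In every case $\pi_0(\beta)=0$, so its cokernel is $\Whp{E}\cong\Whp{M}$ (the isomorphism induced by the homotopy equivalence $\lambda$), and we have $o(f)\in\Whp{M}$ as claimed.

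I do not anticipate any serious obstacle. The argument is essentially the assembly of three inputs already present in the excerpt---Lemma \ref{fqm:connectedness_of_Wh_X} (for $\Wall(p)$), the identification $\pi_1\Wh(F)\cong\Whp{F}$ (for $o(f)$), and Theorem \ref{main_result_1}---with the slightly weaker hypothesis in the low-$n$ cases being exactly what is needed for $\Whp{F_b}$ to vanish.
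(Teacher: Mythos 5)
Your proposal is correct and follows essentially the same route as the paper: invoke Lemma \ref{fqm:connectedness_of_Wh_X} to kill $\Wall(p)$ in all three cases, and observe that $\pi_0(\beta)$ factors through $\Whp{F_b}=0$, so its cokernel is all of $\Whp{M}$ and Theorem \ref{main_result_1} gives the claim. The only difference is that you spell out the case analysis for why $\Whp{F_b}$ vanishes, which the paper leaves implicit.
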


\begin{proof}
By the assumptions and Lemma \ref{fqm:connectedness_of_Wh_X}, the parametrized Wall obstruction is defined and zero. Moreover, the map $\beta\colon H^0(B;\Wh(F_b))\to\Whp{E}$ is zero since it factors through $\Whp{F_b}=0$.
\end{proof}

\begin{prop}
\begin{enumerate}
\item Suppose that $B$ is homotopy equivalent to a wegde of $n$ copies of $S^1$ ($n\geq 1$), and suppose that the homotopy fiber $F$ of $f$ is connected and homotopy equivalent to a finite complex. Then, $\Wall(p)$ is an element of $\bigoplus_n \Whp{F}_{\alpha_n}$ (coinvariants under the action of the fiber transport along the corresponding copy of $S^1$).
\item The torsion obstruction (whenever defined) is an element  in the quotient 
\[\Whp{M}/ (n-1)\cdot\Whp{F}^{\pi_1 B}.\]
In particular, if $n=1$, the torsion obstruction lives in $\Whp{M}$.
\end{enumerate}
\end{prop}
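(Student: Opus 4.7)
The plan is to apply the spectral sequence of Theorem \ref{fqm:spectral_sequence} to both obstructions, exploiting that $B \simeq \bigvee_n S^1$ is $1$-dimensional with free fundamental group so that only the columns $p = 0, 1$ contribute. In both parts I first check that the relevant differentials vanish by dimension, then identify the $E_\infty$ groups explicitly.

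For (i), since $F$ is homotopy finite its finiteness obstruction vanishes, so by Theorem \ref{fqm:spectral_sequence}(iii) the image of $\Wall(p)$ under the edge homomorphism $e$ is zero, i.e.\ $\Wall(p)$ already lies in $\mathcal{F}^{1,-1} \subseteq H^0(B; \Wh(F_b))$. A quick differential count shows that no nonzero differential enters or leaves position $(1,-1)$: incoming differentials originate in $p < 0$, outgoing ones land in $p \geq 3 > \dim B$. Hence $\mathcal{F}^{1,-1} = E_\infty^{1,-1} = E_2^{1,-1} = H^1(B; \Whp{F_b})$. Applying Theorem \ref{fqm:spectral_sequence}(iv) to each of the $n$ loops $\gamma_i \colon S^1 \to B$ sends the class of $\Wall(p)$ in $H^1(B; \Whp{F_b})$ to the Whitehead torsion of the fiber transport along $\gamma_i$ in $\Whp{F_b}_{\alpha_i}$. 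Gathering these components over $i = 1, \dots, n$ yields the claimed tuple in $\bigoplus_n \Whp{F}_{\alpha_n}$.

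For (ii), the same strategy applies to the spectral sequence for $\Omega\Wh_B(E)$, whose $E_2$-page is $E_2^{p,q} = H^p(B; \pi_{1-q}\Wh(F_b))$. By definition $\pi_0(\beta)$ factors as the restriction to the basepoint $H^0(B; \Omega\Wh(F)) \to \pi_0 \Omega\Wh(F) = \Whp{F}$, followed by $\chi(B) \cdot i_*$. The restriction is precisely the edge homomorphism onto $E_\infty^{0,0}$, and the same dimensional bookkeeping (outgoing differentials from $(0,0)$ land in $p \geq 2 > \dim B$) shows $E_\infty^{0,0} = E_2^{0,0} = H^0(B; \Whp{F_b}) = \Whp{F}^{\pi_1 B}$, the invariants under fiber transport. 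Since $\chi(\bigvee_n S^1) = 1 - n$, the image of $\pi_0(\beta)$ equals $(1-n) \cdot i_*\Whp{F}^{\pi_1 B} = (n-1) \cdot \Whp{F}^{\pi_1 B}$ as a subgroup of $\Whp{M}$, proving the claimed formula for the cokernel. In the special case $n = 1$, $\chi(B) = 0$ forces $\pi_0(\beta) = 0$ and the torsion obstruction lies in $\Whp{M}$ itself.

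The only nontrivial input is the identification in (i) of each component with a fiber transport torsion, which is precisely part (iv) of Theorem \ref{fqm:spectral_sequence}. The differential-vanishing arguments in both parts are routine once the $E_2$-page of a 4th quadrant spectral sequence over a $1$-dimensional base is drawn out.
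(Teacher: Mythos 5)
Your proof is correct and follows essentially the same route as the paper: collapse the spectral sequence over the $1$-dimensional base and read off the filtration, reducing (i) to the vanishing of the edge term via the finiteness obstruction and (ii) to factoring $\pi_0(\beta)$ through the edge map onto $H^0(B;\Whp{F_b})\cong\Whp{F}^{\pi_1 B}$. The extra differential counting is the detail the paper leaves implicit, and your invocation of Theorem~\ref{fqm:spectral_sequence}(iv) to name the components is additional information not strictly needed for the stated claim but consistent with the paper's identification.
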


\begin{proof}
(i) By the spectral sequence, there is an exact sequence
\[0 \to H^1(B;\pi_1\Wh(F_b)) \to H^0(B;\Wh(F_b)) \xrightarrow{e} H^0(B;\pi_0\Wh(F_b))\to 0,\]
and the image of $\Wall(p)$ under the edge homomorphism $e$ is given by the finiteness obstruction of the fiber, which is zero by assumption. Therefore, $\Wall(p)$ lifts to
\[H^1(B;\pi_1\Wh(F_b))\cong\bigoplus_n H^1(S^1;\pi_1\Wh(F_b))\cong\bigoplus_n\Whp{F_b}_{\alpha_n}.\]

(ii) The map $\beta$ factors as
\[H^0(B;\Omega\Wh(F_b))\to H^0(B;\Whp{F_b})\cong\Whp{F_b}^{\pi_1 B}\xrightarrow{\chi_e(B)\cdot i_*}\Whp{M},\]
where the first map is a surjection and the Euler characteristic of $B$ is $n-1$.
\end{proof}

\begin{prop}
Let $f,g\colon M\to B$ be two homotopic projections of bundles of compact manifolds. Suppose that $B$ is homotopy equivalent to a wedge of $n$ copies of $S^1$. Denote by $F$ the homotopy fiber of $f$, which is homotopy equivalent to the homotopy fiber of $g$. The obstruction group $A$ for $f$ and $g$ being equivalent (in the sense of the introduction) fits into the following exact sequence:
\[0\to \bigoplus_n \pi_2\Wh(F)_{\alpha_n} \to A \to \Whp{F}^{\pi_1 B}\xrightarrow{(n-1)\cdot i_*}\Whp{M}\]
\end{prop}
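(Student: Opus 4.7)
The plan is to identify $A$ via Theorem \ref{main_result_2} as the kernel of $\pi_0(\beta)\colon H^0(B;\Omega\Wh(F))\to\Whp{M}$, and then dissect this kernel using the spectral sequence of Theorem \ref{fqm:spectral_sequence} applied to the $\Omega$-shifted Whitehead spectrum $\Omega\Wh$. The two homotopic bundle projections $f$ and $g$ give two points of the set $C/{\sim}$ classified by $\ker\pi_0(\beta)$; fixing $f$ as basepoint, the obstruction to equivalence of $g$ with $f$ lives in this kernel, so indeed $A=\ker\pi_0(\beta)$.

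Since $B\simeq\bigvee_n S^1$ is homotopy equivalent to a $1$-dimensional CW complex, the spectral sequence of Theorem \ref{fqm:spectral_sequence} (applied to $\Omega\Wh$) has only two columns $p=0,1$, so it collapses at $E_2$ and all differentials are forced to vanish. Using Theorem \ref{fqm:spectral_sequence}(ii), the finite filtration of $H^0(B;\Omega\Wh(F))$ yields a short exact sequence
\[0\to H^1(B;\pi_2\Wh(F))\to H^0(B;\Omega\Wh(F))\to H^0(B;\Wh(\pi_1 F))\to 0.\]
The standard identifications $H^1(\bigvee_n S^1;-)\cong\bigoplus_n (-)_{\alpha_n}$ and $H^0(\bigvee_n S^1;-)\cong(-)^{\pi_1 B}$ then rewrite this as
\[0\to\bigoplus_n\pi_2\Wh(F)_{\alpha_n}\to H^0(B;\Omega\Wh(F))\to\Whp{F}^{\pi_1 B}\to 0.\]

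Next, I would reuse the factorization of $\pi_0(\beta)$ derived in the proof of the preceding proposition, in which the surjection just exhibited is precisely the first factor:
\[\pi_0(\beta)\colon H^0(B;\Omega\Wh(F))\twoheadrightarrow\Whp{F}^{\pi_1 B}\xrightarrow{(n-1)\cdot i_*}\Whp{M}.\]
Forming kernels along this factorization is now formal: the kernel $A$ of the composite contains the kernel $\bigoplus_n\pi_2\Wh(F)_{\alpha_n}$ of the first (surjective) arrow, and maps onto $\ker\bigl((n-1)\cdot i_*\bigr)$ in $\Whp{F}^{\pi_1 B}$. Splicing these two pieces gives the required four-term exact sequence.

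The only real content is the collapse and strong convergence of the spectral sequence in this low-dimensional situation; once Theorem \ref{fqm:spectral_sequence} is invoked and the factorization of $\beta$ from the preceding proposition is in place, the rest is a diagram chase.
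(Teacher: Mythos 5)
Your proposal is correct and follows essentially the same route as the paper's own proof: identify $A$ with $\ker\pi_0(\beta)$ via Theorem~\ref{main_result_2}, collapse the (shifted) spectral sequence of Theorem~\ref{fqm:spectral_sequence} over the $1$-dimensional base to get the two-term filtration of $H^0(B;\Omega\Wh(F))$, and then form kernels along the factorization of $\pi_0(\beta)$ through the surjection $\gamma$ onto $\Whp{F}^{\pi_1 B}$. Your write-up is slightly more explicit about the grading shift and the final kernel-splicing, but there is no difference in substance.
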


\begin{proof}
By Theorem \ref{main_result_2}, $A$ is given by the kernel of 
\[\beta\colon H^0(B;\Omega\Wh(F_b))\xrightarrow{\gamma} H^0(B;\Whp{F})\cong \Whp{F}^{\pi_1 B} \xrightarrow{(n-1)\cdot i_*}\Whp{M}.\] 
By the spectral sequence, we have
\[0\to H^1(B;\pi_2\Wh(F_b))\to H^0(B;\Omega\Wh(F_b))\xrightarrow{\gamma} H^0(B;\pi_1\Wh(F_b))\to 0.\]
Thus
\[\ker\gamma \cong H^1(B;\pi_2\Wh(F_b))\cong  \bigoplus_n \pi_2\Wh(F)_{\alpha_n}.\]
Since $\gamma$ is surjective, there is a short exact sequence
\[0\to\ker\gamma\to A\to\ker((n-1)\cdot i_*)\to 0.\]
The claim follows.
\end{proof}

\section[Comparison with the obstructions by Farrell-L\"uck-Steimle]{Comparison with the obstructions by  Farrell-L\"uck-Steimle}\label{fqm:section_comparison_to_FLS}

The content of the author's Diploma thesis \cite{Steimle(2007)} was to define Whitehead torsion obstructions to fibering a manifold over another manifold. See \cite{Farrell-Lueck-Steimle(2009)} for a published and extended version. The goal of this section is to compare these obstructions.

Given a map $f\colon M\to B$ of topological manifolds, factor as usual as $f=p\circ \lambda$, a homotopy equivalence followed by a fibration. In \cite{Farrell-Lueck-Steimle(2009)}, two obstructions for $f$ to be homotopic to a fiber bundle are defined:
\begin{enumerate}
\item An element $\theta(f)\in H^1(B;\Whp{M})$ which is defined whenever the homotopy fiber $F$ of $f$ is homotopy finite (an obvious necessary condition). It is defined by the rule that whenever $\gamma\colon S^1\to B$ is a loop in $B$, then under the restriction map
\[H^1(B;\Whp{M})\xrightarrow{\gamma^*}H^1(S^1;\Whp{M})\cong\Whp{M}\]
$\theta(f)$ maps to $i_*(\tau)$, where $\tau$ is the Whitehead torsion of the fiber transport on $p$ along $\gamma$ (choosing an arbitrary simple structure on the fiber $F$).
\item If $\theta(f)=0$, there is defined an element 
\[\tau_{\fib}(f)\in\coker(\Whp{F}\xrightarrow{\chi_e(B)\cdot i_*}\Whp{M})\]
where $i\colon F\to M$ is the inclusion of the homotopy fiber, and $\chi_e(B)$ denotes the Euler characteristic. It is defined as follows: Choose a simple structure on the homotopy fiber of $f$ and perform a certain construction (inductively over the cells of $B$) to obtain a simple structure on $E$. Then $\tau_{\fib}(f)$ is the image of the Whitehead torsion of $\lambda\colon M\to E$, which is well-defined in the quotient.
\end{enumerate}

\begin{thm}\label{fqm:comparison_to_FLS_obstructions}
\begin{enumerate}
\item The image of $\Wall(p)$ under the restriction
\[H^0(B;\Wh(F_b))\to H^0(\{b\}; \Wh(F_b))\cong \tilde K_0(\ZZ[\pi F_b])\]
is the finiteness obstruction of the fiber.
\item Suppose that $F$ is homotopy finite. The image of the Wall obstruction $\Wall(p)$ under the secondary homomorphism
\begin{multline*}
\ker\bigl(H^0(B;\Wh(F_b))\to H^0(B;\pi_0\Wh(F_b))\bigr)\\
\to H^1(B;\Whp{F_b})\xrightarrow{i_*} H^1(B;\Whp{M})
\end{multline*}
is $\theta(f)$.
\item Suppose that $\Wall(p)=0$. The definition of the map $\beta$ as a composite
\[H^0(B;\Omega\Wh(F_b))\to \Wh(\pi_1 F_b)\xrightarrow{\chi_e(B)\cdot i_*} \Wh(\pi_1 E)\cong \Wh(\pi_1 M)\]
induces a map
\[\coker(\pi_0(\beta))\to\coker \big(\Whp{F}\xrightarrow{\chi_e(B)\cdot i_*}\Whp{M}\big)\]
under which $o(f)$ maps to $\tau_{\fib}(f)$. In particular, if $\chi_e(B)=0$ or $\Wh(\pi_1 F)=0$, then 
\[o(f)=\tau_{\fib}(f)\in\Whp{M}.\]
\end{enumerate}

\begin{proof}
(i) and (ii) were proved in Theorem \ref{fqm:spectral_sequence}.

(iii) If $\Wall(p)=0$, then we may assume that $p$ is a bundle of compact topological manifolds, and it follows from \cite[Lemma 3.19]{Farrell-Lueck-Steimle(2009)} that the simple structure on $E$ is just the canonical simple structure of the topological manifold $E$. Therefore both $o(f)$ and $\tau_{\fib}(f)$ are given by the respective classes of the Whitehead torsion of $\lambda$.
\end{proof}
\end{thm}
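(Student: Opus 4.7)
The strategy is to treat the three parts in order, leveraging Theorem~\ref{fqm:spectral_sequence} for the first two and the ``Riemann--Roch theorem with converse'' together with a compatibility of simple structures for the third. Parts (i) and (ii) are essentially translations of parts (iii) and (iv) of Theorem~\ref{fqm:spectral_sequence} into the language of the obstructions from \cite{Farrell-Lueck-Steimle(2009)}.

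For (i), the edge homomorphism $e \colon H^0(B;\Wh(F_b)) \to H^0(B;\pi_0\Wh(F_b))$ of the Bousfield--Kan spectral sequence sends $\Wall(p)$ to the fiberwise finiteness obstruction by Theorem~\ref{fqm:spectral_sequence}(iii); further restricting to a basepoint $b \in B$ just evaluates this at the fiber $F_b$.

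For (ii), assume $F$ is homotopy finite so that the finiteness obstruction of $F_b$ vanishes; by (i), the class $\Wall(p)$ lies in $\ker(e)$ and hence admits a lift through the secondary homomorphism to $H^1(B;\pi_1\Wh(F_b)) = H^1(B;\Whp{F_b})$. Theorem~\ref{fqm:spectral_sequence}(iv) identifies this lift by its value on any loop $\gamma\colon S^1\to B$: it is precisely the Whitehead torsion of the fiber transport $t_\gamma$ on $F_b$. Applying $i_*\colon H^1(B;\Whp{F_b})\to H^1(B;\Whp{M})$ and again restricting along $\gamma$ gives $i_*\tau(t_\gamma)$, which is exactly the defining formula for $\theta(f)$. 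Since a class in $H^1(B;\Whp{M})$ is determined by its restrictions to loops, this identifies the image of $\Wall(p)$ with $\theta(f)$.

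For (iii), assume $\Wall(p) = 0$. By Theorem~\ref{fqm:RRwc} we may choose the factorization $f = p\circ\lambda$ so that $p \colon E \to B$ is a fiber bundle of compact topological manifolds. Then $E$ inherits a canonical simple structure from its topological manifold structure, and by \cite[Lemma~3.19]{Farrell-Lueck-Steimle(2009)} this agrees with the simple structure on $E$ used in the construction of $\tau_{\fib}(f)$. Consequently, $\tau_{\fib}(f)$ is represented by $\tau(\lambda) \in \Whp{M}$ modulo the image of $\chi_e(B)\cdot i_*\colon \Whp{F} \to \Whp{M}$. Our $o(f)$ is, by construction, represented by the same element $\tau(\lambda)$ in the cokernel of $\pi_0(\beta)$. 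The factorization of $\beta$ in the statement exhibits the comparison map between these two cokernels, and $\tau(\lambda)$ maps to $\tau(\lambda)$; this gives $o(f)\mapsto \tau_{\fib}(f)$. The specialization in the case $\chi_e(B)=0$ or $\Wh(\pi_1 F)=0$ is then immediate since the denominator of the FLS cokernel collapses.

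The main technical point is the spectral-sequence input from Theorem~\ref{fqm:spectral_sequence}(iv)—identifying the secondary differential with the classical torsion of the fiber transport—which is why that result is proved first; once it is available, parts (i) and (ii) are essentially formal. Part (iii) is a comparison of simple structures, and its delicate aspect (the canonicity of the simple structure on the total space of a bundle) is exactly what \cite[Lemma~3.19]{Farrell-Lueck-Steimle(2009)} provides.
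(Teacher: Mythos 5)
Your proposal is correct and follows the same route as the paper: parts (i) and (ii) are read off from Theorem \ref{fqm:spectral_sequence} (iii) and (iv), and part (iii) uses Theorem \ref{fqm:RRwc} to reduce to a bundle, invokes \cite[Lemma 3.19]{Farrell-Lueck-Steimle(2009)} to identify the simple structure on $E$ with the canonical one, and then observes that both obstructions are classes of $\tau(\lambda)$ compared via the factorization of $\beta$. You merely spell out details (e.g.\ that a class in $H^1(B;\Whp{M})$ is detected on loops) that the paper leaves implicit.
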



\appendix

\section{Fibering \texorpdfstring{$Q$}{Q}-manifolds}\label{section:fibering_Q_manifolds}

The goal of this appendix is to show that the obstruction theory for both existence and uniqueness developed in this paper also applies to fibering compact $Q$-manifolds over compact ANRs.

Here is a collection of results from the theory of $Q$-manifolds (see \cite{Chapman(1976)}):
\begin{enumerate}
\item If $X$ is a locally compact ANR (e.g.~a topological manifold), then $X\times Q$ is a $Q$-manifold.
\item Every compact $Q$-manifold is of the form $X\times Q$, where $X$ is a compact polyhedron.
\item A map $f\colon X\to Y$ between compact CW complexes is a simple homotopy equivalence if and only if $f\times\id_Q\colon X\times Q\to Y\times Q$ is homotopic to a homeomorphism. This property may be taken as a definition of simple homotopy equivalence between compact ANRs.
\item Any cell-like map \cite{Lacher(1969)} between $Q$-manifolds is arbitrarily close to a homeomorphism. This also shows that $M\times Q\cong Q$.
\end{enumerate}

A compact $Q$-manifold bundle is a fiber bundle whose fibers are compact $Q$-manifolds. Given a fibration $p\colon E\to B$ over a paracompact spaces, the structure space $\calS^Q(p)$ of compact $Q$-manifold bundles is defined in analogy to the structure space $\calS_n(p)$. 

Facts (i) and (iv) show that the total space of a compact $Q$-manifold bundle over a compact ANR is a compact $Q$-manifold. As a consequence there is a geometric assembly map
\[\alpha\colon \calS^Q(p)\to\calS^Q(E)\]
whenever $B$ is a compact ANR. Of course, the re-interpretation of the fibering problem in terms of the geometric assembly map (Lemma \ref{fqm:question_formulated_with_structure_spaces}) remains valid in the $Q$-manifold setting. 

The relation between the fibering problem for compact $Q$-manifolds and the fibering problem of compact topological manifolds is given by the map
\[(\times Q)\colon \calS_n(p)\to\calS^Q(p)\]
which sends a simplex $(q,\lambda)$ to $(q\times Q, \lambda')$ there $\lambda'$ is the obvious composite of $\lambda\times\id_Q$ with the projection $E\times Q\to E$. Since $Q\times I\cong Q$, it factors canonically through $\calS_\infty(p)$.

\begin{thm}
If $B$ is a compact topological manifold and $p\colon E\to B$ is a bundle of compact topological manifolds, then the following diagram is a weak homotopy pull-back:
\[\xymatrix{
{\calS_\infty(p)} \ar[rr]^{(\times Q)} \ar[d]^\alpha && {\calS^Q(p)} \ar[d]^\alpha\\
{\calS_\infty(E)} \ar[rr]^{(\times Q)} && {\calS^Q(E)}
}\]
\end{thm}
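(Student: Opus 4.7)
The plan is to reduce the theorem to Theorem \ref{main_result_of_paper1} by constructing a $Q$-manifold analog of the parametrized Whitehead torsion, establishing the corresponding pull-back square, and then pasting.

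First, I would extend the parametrized Whitehead torsion $\tau$ of \cite{Steimle(2011)} to a map $\tau^Q\colon \calS^Q(p)\to \sections{\Omega\Wh_B(E)}{B}$ (and similarly $\tau^Q\colon \calS^Q(E)\to \Omega\Wh(E)$) satisfying $\tau^Q \circ (\times Q) = \tau$. By fact (ii) every compact $Q$-manifold is of the form $X\times Q$ for a compact polyhedron $X$, and by fact (iii) the polyhedron $X$ is determined up to simple homotopy equivalence. Consequently, a fiber homotopy equivalence of compact $Q$-manifold bundles has a well-defined fiber-wise Whitehead torsion, which assembles to a parametrized Whitehead torsion just as in \cite{Steimle(2011)}. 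The compatibility $\tau^Q \circ (\times Q)=\tau$ holds because the projection $X \times Q \to X$ is a simple homotopy equivalence whenever $X$ is a compact polyhedron or a compact topological manifold.

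Second, I would prove the $Q$-manifold analog of Theorem \ref{main_result_of_paper1}: the diagram
\[\xymatrix{
{\calS^Q(p)} \ar[rr]^{\tau^Q} \ar[d]^\alpha && {\sections{\Omega\Wh_B(E)}{B}} \ar[d]^\beta\\
{\calS^Q(E)} \ar[rr]^{\tau^Q} && {\Omega\Wh(E)}
}\]
is a weak homotopy pull-back. The formal and algebraic $K$-theoretic portions of the proof in \cite{Steimle(2011)} transfer verbatim; only the geometric input needs to be replaced. The topological $h$-cobordism theorem identifying the space of $h$-cobordisms on a compact topological manifold $F$ with $\Omega\Wh(F)$ has a $Q$-manifold counterpart provided by Chapman's $h$-cobordism theorem, and its parametrized extension is due to Hughes.

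Finally I would paste. Both squares share the right column, and the horizontal maps commute with $(\times Q)$ via $\tau^Q \circ (\times Q) = \tau$. Fixing a base point in $\calS_\infty(E)$, the induced maps on homotopy fibers of the vertical arrows
\[ \hofib(\alpha_\infty)\xrightarrow{\simeq} \hofib(\beta) \xleftarrow{\simeq} \hofib(\alpha_Q) \]
are weak equivalences by Theorem \ref{main_result_of_paper1} and its $Q$-manifold analog, respectively. Hence $\hofib(\alpha_\infty)\to \hofib(\alpha_Q)$ is a weak equivalence, and by the fiber criterion the original square is a weak homotopy pull-back. The main obstacle is Step 2, which requires transferring the explicit construction of the parametrized Whitehead torsion from the topological manifold setting of \cite{Steimle(2011)} to compact $Q$-manifold bundles; this is where Chapman's and Hughes's $Q$-manifold $h$-cobordism theory plays an essential role in substituting for the topological $h$-cobordism input.
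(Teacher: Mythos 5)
The pasting logic in your final step is sound (modulo checking the fiber comparison over \emph{all} basepoints of $\calS_\infty(E)$, not just one), and the statement you want to feed into it --- that $\hofib(\alpha_Q)\simeq\hofib(\beta)$ --- is in fact true. The problem is that your Step 2, the $Q$-manifold analogue of Theorem \ref{main_result_of_paper1}, carries essentially the entire content of the theorem you are trying to prove, and the justification offered for it (``the proof transfers verbatim, with Chapman's and Hughes's $h$-cobordism theory substituted for the geometric input'') does not hold up. The finite-dimensional structure spaces carry tangential information: there is a vertical tangent bundle map $T^v\colon\calS_\infty(p)\to\map(E,B\TOP)$, and by the Chapman--Ferry--Burghelea fibration $\colim_n\TOP(M\times D^n)\to\TOP(M\times Q)\to\map(M,B\TOP)$ the functor $(\times Q)$ precisely discards this $\map(-,B\TOP)$-factor. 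Any proof of your Step 2 must therefore explain why the tangential data, which enters the identification of $\hofib(\alpha_\infty)$ with $\hofib(\beta)$ but is invisible to $\calS^Q$, cancels out; that cancellation is the actual mathematical content here and cannot be obtained by swapping one $h$-cobordism theorem for another inside an otherwise ``verbatim'' argument. Step 1 has a related gap: $\tau$ is a point of a section space built from the parametrized $A$-theory characteristic, not something assembled from fiberwise Whitehead torsions, so producing $\tau^Q$ as a map of \emph{spaces} with $\tau^Q\circ(\times Q)\simeq\tau$ requires substantially more than the $\pi_0$-level observation you give.

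For contrast, the paper goes around the square the other way: it augments the horizontal maps to $((\times Q),T^v)\colon\calS_\infty(p)\to\calS^Q(p)\times\map(E,B\TOP)$ and proves that these augmented horizontals are weak equivalences, using Hoehn's fibration sequence $\calS^{fr}_\infty(p)\to\calS_\infty(p)\to\map(E,B\TOP)$, lift-space models for all the structure spaces, Wall's classification of stable thickenings, and the Chapman--Ferry--Burghelea fibration. Since the correction $(+p^*TB)$ on the $\map(E,B\TOP)$-factor is a homotopy equivalence, the original square is then a weak homotopy pullback for formal reasons; no $Q$-manifold torsion and no $Q$-manifold analogue of Theorem \ref{main_result_of_paper1} are needed.
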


\begin{proof}
The diagram commutes up to homotopy. Moreover the vertical tangent bundle defines a map 
\[T^v\colon \calS_\infty(p)\to\map(E,B\TOP)\]
such that the following diagram commutes up to homotopy (see \cite[Proposition 8.3]{Steimle(2011)} for the $T^v$-component):
\[\xymatrix{
{\calS_\infty(p)} \ar[rr]^(.35){((\times Q),T^v)} \ar[d]^\alpha
 && {\calS^Q(p)\times\map(E,B\TOP)} \ar[d]^{\alpha\times(+p^* TB)} \\
{\calS_\infty(E)} \ar[rr]^(.35){((\times Q),T^v)} 
 && {\calS^Q(E)\times\map(E,B\TOP)}
}\]
Since the map $(+p^*TB)$ is an equivalence, we can proof the theorem by showing that the horizontal lines in the diagram are weak homotopy equivalences.

To do that, denote by $\calS_n^{fr}(p)$ the space of framed manifold structures on $p$: It is the geometric realization of a simplicial set where a $k$-simplex is a commutative diagram
\[\xymatrix{
E' \ar[rd]_q \ar[rr]^\varphi_\simeq && E\times \Delta^k \ar[ld]^p\\
& B\times \Delta^k
}\]
together with a bundle map $T^v q\to\epsilon$ from the vertical tangent bundle of $q$ to the trivial bundle $n$-dimensional over $E\times\Delta^k$ which covers $\varphi$. The usual stabilization procedure produces a space $\calS_\infty^{fr}(p)$.

The forgetful map from $\calS_\infty^{fr}(p)$ to $\calS_\infty(p)$ fits into a commutative diagram
\begin{equation}\label{diagram_comparing_structure_spaces}
\xymatrix{
{\calS^{fr}_\infty(p)} \ar[rr] \ar[rrd]_{(\times Q)} 
  && {\calS_\infty(p)} \ar[rr]^{T^v} \ar[d]^{(\times Q)}
  && {\map(E,B\TOP)} \\
  && {\calS^Q(p)}
}
\end{equation}
The proof is now a consequence of the following two claims:

\begin{description}
\item[Claim (i)] The horizontal line in \eqref{diagram_comparing_structure_spaces} is a weak homotopy fibration sequence, and
\item[Claim (ii)] The diagonal arrow in \eqref{diagram_comparing_structure_spaces} is a weak homotopy equivalence.
\end{description}

Claim (i) is Proposition 1.2.1 from \cite{Hoehn(2009)}. To prove claim (ii), we will use the notation and results from \cite[section 2]{Steimle(2011)}. The structure space $\calS_n(p)$ is weakly homotopy equivalent to a space of lifts in the diagram
\[\xymatrix{
 && {\Bun_n(*;F)} \ar[d]\\
B \ar[rr]^p \ar@{.>}[rru] && {\Fib(*;F)}
}\]
where $F$ is the fiber of $p$. Since $B$ is compact, it follows that
\[\calS_\infty(p)\simeq\lift{\Bun_\infty(*;F)}{B}{p}{\Fib(*;F))}\]
with $\Bun_\infty(*;F):=\colim_n \Bun_n(*;F)$, the colimit over the stabilization.

For a space $X$, let $\mathbf{Fib}^{fr}_n(X;F)$ be the category where an object is a fibration $p\colon E\to X$ with fiber $F$ together with a $\TOP(n)$-bundle over $E$, and a morphism is a fiber homotopy equivalence of fibrations which is covered by a bundle map. Let $\mathbf{Bun}^Q(X;F)$ be the category of bundles of compact $Q$-manifolds over $X$ where the fibers are homotopy equivalent to $F$, with bundle homeomorphisms as morphisms. The arguments from \cite[section 2]{Steimle(2011)} produce classifying spaces $\Fib^{fr}_n(*;F)$ and $\Bun^Q(*;F)$. Letting
\[\Fib^{fr}_\infty(*;F):=\colim_n \Fib^{fr}_n(*;F),\]
the colimit over the stabilization of the euclidean bundle, we obtain homotopy equivalences
\[\calS^{fr}_\infty(p)\simeq \lift{\Bun_\infty(*;F)}{B}{(p, \epsilon)}{\Fib^{fr}_\infty(*;F)},\quad \calS^Q(p)\simeq \lift{\Bun^Q(*;F)}{B}{p}{\Fib(*;F)}\]
where $F$ denotes the fiber of $p$.

Hence, claim (ii) follows from

\begin{description}
\item[Claim (ii')] The diagram
\[\xymatrix{
{\Bun_\infty(*;F)} \ar[rr]^{(\times Q)} \ar[d] && {\Bun^Q(*;F)} \ar[d]\\
{\Fib^{fr}_\infty(*;F)} \ar[rr] && {\Fib(*;F)}
}\]
is homotopy cartesian.
\end{description}

We will show claim (ii') by considering the horizontal homotopy fibers. Again by the arguments of \cite[section 2]{Steimle(2011)}, the lower horizontal fiber is given by the mapping space $\map(F,B\TOP)$. 

On the other hand, the upper homotopy fiber over the point determined by a $Q$-manifold $N$ is the stable structure space of compact topological manifolds $M$ equipped with a homeomorphism $M\times Q\to N$. The path components of this space, in view of property (iii) above, are the stable homeomorphism classes of compact topological manifolds equipped with a simple homotopy equivalence to $N$. There are in bijection to $[F,B\TOP]$ by an argument involving the $h$-cobordism theorem (see e.g.~\cite[prop.~5.1]{Wall_IV}). To determine the higher homotopy groups of this structure space, recall that $\Bun_\infty(*;F)$ and $\Bun^Q(*;F)$ are disjoint unions of classifying spaces, and use the homotopy fibration sequence \cite[p.~171]{Weiss-Williams(2001)}
\[\colim_n \TOP(M\times D^n)\to\TOP(M\times Q)\to\map(M,B\TOP)\]
for a compact topological manifold $M$, which is due to Chapman-Ferry and Burghelea \cite{Burghelea(1983)}.
\end{proof}

This theorem shows that for a given map $f\colon M\to B$ between compact topological manifolds, the fibering problem for $f$ and the one for $M\times Q\to B$ are equivalent. Since by result (ii) above, every compact $Q$-manifold is of the form $M\times Q$, this reduces the fibering problem for $Q$-manifolds to the one for topological manifolds provided the base space is a topological manifold.

The case where $B$ is a compact ANR which is not a topological manifold follows by the following ``change of base'' result \cite{Chapman-Ferry(1978)}:

\begin{prop}
Let $f\colon M\to B$ a map from a compact $Q$-manifold to a compact ANR, and let $h\colon B\to B'$ be a simple homotopy equivalence between compact ANRs. Then, $f$ fibers if and only if $h\circ f$ fibers. Moreover, the equivalence classes of $Q$-manifold fiber bundle projections homotopic to $f$ are in bijection to those homotopic to $h\circ f$.
\end{prop}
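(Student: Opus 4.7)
I plan to pass through the mapping cylinder $Z := \Cyl(h)$, which is a compact ANR containing both $B$ and $B'$ as closed subsets and strong deformation retracts. The retraction $r\colon Z\to B'$ satisfies $r|_B = h$, and both inclusions $i_B\colon B\hookrightarrow Z$ and $i_{B'}\colon B'\hookrightarrow Z$ are cofibrations and simple homotopy equivalences (the former because $h$ is, the latter because it is a strong deformation retract). A $Q$-bundle structure on $f$ will be transported to one on $h\circ f$ by extending the bundle across $Z$ and restricting to $B'$, and Chapman's theorem that simple homotopy equivalences of compact $Q$-manifolds are homotopic to homeomorphisms will be used to identify the new total space with $M$.

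\textbf{Existence.} Suppose $f\simeq q$ with $q\colon M\to B$ a $Q$-bundle. Because $Q$-manifold bundles over paracompact ANRs are classified by maps into a classifying space (Chapman) and $i_B$ is a cofibration and homotopy equivalence, the classifying map of $q$ extends over $Z$, yielding a $Q$-bundle $\tilde q\colon \tilde M\to Z$ with $\tilde q^{-1}(B)=M$ and $\tilde q|_M = q$. Set $M':=\tilde q^{-1}(B')$, so $q':=\tilde q|_{M'}\colon M'\to B'$ is a $Q$-bundle. The inclusions $M\hookrightarrow \tilde M$ and $M'\hookrightarrow\tilde M$ lift the simple homotopy equivalences $i_B$ and $i_{B'}$, hence are themselves simple homotopy equivalences of compact $Q$-manifolds. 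By Chapman's theorem the zig-zag $M\hookrightarrow \tilde M\hookleftarrow M'$ is homotopic to a homeomorphism $\psi\colon M\to M'$. Combining the homotopies involved with $r\circ i_B = h$ then gives $q'\circ\psi\simeq h\circ q\simeq h\circ f$, so that $q'\circ\psi\colon M\to B'$ fibers $h\circ f$. The converse follows by applying the same argument to a simple homotopy inverse of $h$.

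\textbf{Uniqueness.} For the bijection on equivalence classes, the construction $q\mapsto q'\circ\psi$ is to be carried out in families: replacing $B, B'$ by $B\times\Delta^k, B'\times\Delta^k$ and $Z$ by $Z\times\Delta^k$, the same extension-and-restriction procedure defines a simplicial map between the structure spaces $\calS^Q(p)\to \calS^Q(p')$ for fibration replacements $p$ of $f$ and $p'$ of $h\circ f$. By the existence argument applied fiberwise this map is a weak homotopy equivalence, and by construction it commutes up to homotopy with the geometric assembly maps to $\calS^Q(M)$. The $Q$-manifold analogue of Lemma \ref{fqm:question_formulated_with_structure_spaces} then turns the resulting bijection on $\pi_0$ into the claimed bijection of equivalence classes of $Q$-bundle projections.

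\textbf{Main obstacle.} The principal technical point is establishing the extension property for $Q$-bundles across the cofibration $B\hookrightarrow Z$, namely that a $Q$-bundle over $B$ lifts to one over $Z$. This will rest on Chapman's classifying-space description of compact $Q$-manifold bundles over paracompact ANRs, together with the standard obstruction-theoretic fact that maps into any space extend along a cofibration which is a homotopy equivalence. A related subtle point is verifying that the lifts $M\hookrightarrow\tilde M$ and $M'\hookrightarrow\tilde M$ are actually simple (not merely plain) homotopy equivalences, so that Chapman's theorem applies; this comes down to the compatibility of Whitehead torsion with bundle pullback, which is available in the $Q$-manifold setting. Once both of these are in hand, the individual homotopies and the parametrized upgrade required for the uniqueness part are essentially formal.
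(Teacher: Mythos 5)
Your proof is essentially correct but takes a genuinely different, and considerably longer, route than the paper's. The paper's entire argument is two sentences: using properties (i) and (iv) of $Q$-manifold theory one reduces to the case where $B$ and $B'$ are themselves compact $Q$-manifolds (by replacing the base with $B\times Q$), and then property (iii) lets one assume $h$ is an actual homeomorphism, after which both the existence and the classification statements are immediate. In other words, the paper puts the $Q$-manifold technology to work \emph{on the base}, whereas you put it to work \emph{on the total space}: you transport the bundle across the mapping cylinder $Z=\Cyl(h)$ and then invoke Chapman's theorem to upgrade the resulting zig-zag of simple homotopy equivalences $M\hookrightarrow\tilde M\hookleftarrow M'$ between total spaces to a homeomorphism. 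Your torsion computation $q'\circ\psi\simeq h\circ q$ checks out, and the appeal to Anderson's transfer formula to see that the inclusions into $\tilde M$ are simple is the right idea. Two remarks. First, your flagged ``main obstacle'' (a classifying-space theorem for compact $Q$-bundles over compact ANRs) is avoidable: since $i_B\colon B\hookrightarrow Z$ is a cofibration and a homotopy equivalence, $B$ is a deformation retract of $Z$ via some $\rho\colon Z\to B$ with $\rho\circ i_B=\id$, and one may simply take $\tilde q:=\rho^*q$; no classification result is needed, only pullback of bundles. (Indeed one can dispense with $Z$ altogether and pull $q$ back directly along a homotopy inverse $k\colon B'\to B$ of $h$ — the mapping cylinder only adds symmetry.) Second, in the uniqueness part the phrase ``by the existence argument applied fiberwise this map is a weak homotopy equivalence'' is not quite right; what you need (and what the structure-space reformulation of the fibering problem supplies) is merely a bijection on $\pi_0$ compatible with the geometric assembly maps, and the clean way to obtain it is to run the construction once with $h$ and once with a simple homotopy inverse $k$, checking the two composites are the respective identities on $\pi_0$ of the structure spaces. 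Both of these are fixable; overall your argument buys you a more explicit geometric picture of the transported bundle at the cost of several technical verifications that the paper sidesteps with the $\times Q$ trick.
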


\begin{proof}
Properties (i) and (iv) above allow a reduction to the case where $B$ is a compact $Q$-manifold. But in this case the claim is obvious since we may assume that $h$ is a homeomorphism.
\end{proof}


\end{document}